\numberwithin{equation}{section}
\newtheorem{theorem}{Theorem}[section]
\newtheorem{lemma}[theorem]{Lemma}
\newtheorem{corollary}[theorem]{Corollary}
\newtheorem{conjecture}[theorem]{Conjecture}
\newtheorem{proposition}[theorem]{Proposition}
\newtheorem{question}[theorem]{Question}
\theoremstyle{definition}
\newtheorem{definition}[theorem]{Definition}
\newtheorem{remark}[theorem]{Remark}
\newcommand{\Extend}[5]{\ext@arrow0099{\arrowfill@#1#2#3}{#4}{#5}}
\DeclareMathOperator{\Lip}{Lip}
\DeclareMathOperator{\dist}{dist}
\DeclareMathOperator{\diam}{diam}
\newcommand{\ve}{\varepsilon}
\newcommand{\ti}[1]{\tilde{#1}}
\begin{document}

\title{Positive scalar curvature metrics and aspherical summands}

\author[S. Chen]{Shuli Chen}
\address[Shuli Chen]{Department of Mathematics, Stanford University, 450 Jane Stanford Way, Bldg 380, Stanford, CA 94305, United States 
\newline
\indent
{Current Address: Department of Mathematics, University of Chicago, 5734 S University Ave, Chicago IL, 60637, United States}}
\email{shulichen@uchicago.edu}

\author[J. Chu]{Jianchun Chu}
\address[Jianchun Chu]{School of Mathematical Sciences, Peking University, Yiheyuan Road 5, Beijing 100871, People's Republic of China}
\email{jianchunchu@math.pku.edu.cn}

\author[J. Zhu]{Jintian Zhu}
\address[Jintian Zhu]{Institute for Theoretical Sciences, Westlake University, 600 Dunyu Road, Hangzhou, Zhejiang 310030, People's Republic of China}
\email{zhujintian@westlake.edu.cn}

\renewcommand{\subjclassname}{\textup{2020} Mathematics Subject Classification}
\subjclass[2020]{Primary 53C21, 53A10}

\begin{abstract}
We prove for $n\in\{3,4,5\}$ that the connected sum of a closed aspherical $n$-manifold with an arbitrary \emph{non-compact} manifold does not admit a complete metric with nonnegative scalar curvature. In particular, a special case of our result answers a question of Gromov.

More generally, we generalize the partial classification result of Chodosh, Li, and Liokumovich to the non-compact domination case with our newly-developed technique.

Our result unifies all previous results of this type, and confirms the validity of Gromov's non-compact domination conjecture for closed aspherical manifolds of dimensions 3, 4, and 5.

\end{abstract}

\maketitle

\section{Introduction}\label{Introduction}

In differential geometry, the scalar curvature, as
a certain average of the sectional curvatures along all tangential planes, is the weakest curvature invariant of a Riemannian metric.
The study of the relationship between the scalar curvature and topology on closed manifolds has a very long history (see \cite{Lic63, SY79a, GL80a, GL80b, SY87,Sch89,LM89,Gro96,Gro19,SY22} and references therein).

For closed surfaces the Gauss--Bonnet formula relates curvature and topology, which in particular yields that there is no smooth metric with positive Gaussian curvature on a torus.
Towards a generalization in higher dimensions,
the well-known Geroch conjecture asserts that the $n$-torus $\mathbb{T}^{n}$  cannot admit any smooth metric of positive scalar curvature, which was verified by Schoen and Yau \cite{SY79a} for $3\leq n\leq 7$ using minimal hypersurfaces, and by Gromov and Lawson \cite{GL80a} for all dimensions using spinors. Recently, Stern \cite{Ste22} also gave a new proof in dimension three using harmonic maps. It is well-known that the Geroch conjecture as well as its generalizations has had several important consequences in geometry and mathematical physics, including Schoen--Yau's proof of the Riemannian positive mass theorem in general relativity \cite{SY79b,Sch89,SY22} and Schoen's resolution of the Yamabe problem \cite{Sch84}.

A natural extension of the Geroch conjecture arises from the consideration for aspherical manifolds, which appear to be an important object in many branches of mathematics (e.g. known as $K(\pi,1)$-space or Eilenberg--MacLane space in topology). Recall that a manifold is called \emph{aspherical} if it has contractible universal cover (or equivalently, if the homotopy group $\pi_i$ vanishes for all $i\geq 2$). Concerning the geometry it was conjectured that all closed aspherical $n$-manifolds do not admit any metric of positive scalar curvature (see \cite{SY87,Gro86}). Up to now, a few progresses have been made towards this conjecture. The three dimensional case was verified by Gromov and Lawson \cite{GL83}. We also note that Schoen and Yau \cite{SY79c} previously obtained related classification results for 3-manifolds with positive scalar curvature.
In dimension four, the special case with non-zero first Betti number was confirmed by Wang \cite{Wang19}, and the general case was proven by Chodosh and Li \cite{CL20} (also see a previous outline from Schoen and Yau \cite{SY87}). In dimension five,  Chodosh and Li \cite{CL20} and Gromov \cite{Gro20} independently verified this conjecture based on recent development of Gromov's $\mu$-bubble method \cite{Gro19}. However, the aspherical conjecture is still widely open in dimensions greater than five.

Motivated by the Liouville theorem from conformal geometry, there have been many efforts to generalize known topological obstructions for positive scalar curvature on closed manifolds to those on non-compact complete manifolds. For example, Lesourd, Unger and Yau \cite{LUY20} reduced the Liouville theorem of locally conformally flat manifolds to an affirmative answer to the generalized Geroch conjecture: $\mathbb{T}^{n}\# X^{n}$ does not admit a complete metric with positive scalar curvature for any non-compact $n$-manifold $X^{n}$. (Actually they just need the generalized Geroch conjecture to hold when $n\leq 6$.) Later, Chodosh and Li \cite{CL20} verified this conjecture in dimensions no greater than seven and thereby completed the program for the Liouville theorem (also see the contribution of Lesourd--Unger--Yau \cite{LUY20} in dimension three). It is also worth mentioning that Wang and Zhang \cite{WZ22} verified the generalized Geroch conjecture in all dimensions with an additional spin assumption on the non-compact manifold $X$.

The obstruction for positive scalar curvature on non-compact connected sums was later generalized to more general cases. For example, Schoen--Yau--Schick (SYS) manifolds (for explicit definition, see e.g. \cite[Section 5]{Gro18}) provide {important generalizations of the} $n$-torus. They were first considered by Schoen and Yau  \cite{SY79a} to introduce the dimension-descent argument and later by Schick \cite{Schi98} to construct a counterexample to the unstable Gromov--Lawson--Rosenberg conjecture. As a corresponding generalization for the generalized Geroch conjecture, Lesourd, Unger and Yau \cite{LUY20} proved that the connected sum $M^{n}\# X$ cannot admit any complete metric with positive scalar curvature when $M$ is an $n$-dimensional SYS manifold with $n=3$ or $4\leq n\leq7$ along with some additional technical assumptions.
Later, Chodosh and Li \cite{CL20} dealt with a special class of SYS manifolds and obtained the same conclusion when $3\leq n\leq 7$. As a unification of previous results,
the first-named author \cite{Chen22} finally established the obstruction for positive scalar curvature on non-compact connected sums with any closed SYS manifold when $3\leq n\leq 7$.

In order to obtain the above-mentioned results, one relies crucially on the abundance of nonzero homology classes for carrying out Schoen--Yau's dimension-descent argument, and Gromov's $\mu$-bubble method can then be applied in a direct way to overcome the issue of non-compactness. However, this is not the case when the underlying manifold is only aspherical. For example, in dimension four, there exist infinitely many closed aspherical 4-manifolds that are homology 4-spheres \cite{RT05}.  Even in cases simpler than non-compact connected sum, not many results are known for non-compact manifolds constructed from closed aspherical ones and in particular
Gromov \cite[p. 151]{Gro19} proposed the following start-up question:
\begin{question}
Are there complete metrics with positive scalar curvature on closed aspherical manifolds with punctures of dimension $4$ and $5$?
\end{question}
Concerning this question we consider the more general problem whether there is any complete metric with positive scalar curvature on connected sums of non-compact manifolds with closed aspherical manifolds. It turns out that we are able to prove the following

\begin{theorem}\label{Thm: main}
Let $N^n$, $n\in\{3,4,5\}$, be a closed aspherical manifold and let $X^n$ be an arbitrary $n$-manifold. Then the connected sum $Y= N \# X$ admits no complete metric with positive scalar curvature.
\end{theorem}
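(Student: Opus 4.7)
The plan is to argue by contradiction, extending Chodosh--Li's $\mu$-bubble proof for closed aspherical manifolds in dimensions $4, 5$ (and Schoen--Yau's stable minimal surface argument in dimension $3$) to the non-compact setting $Y = N \# X$. So suppose $(Y, g)$ is complete with positive scalar curvature, and let $\Sigma_0 \subset Y$ denote the connect-sum sphere.

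First I would encode the aspherical information. Since $N$ is aspherical, a classifying map $u : N \to B\pi_1(N)$ is a homotopy equivalence, and the natural degree-$1$ quotient $q : Y \to N$ (collapsing $X \setminus B$ to a point) gives a map $\phi = u \circ q : Y \to B\pi_1(N)$ which records the aspherical half of the topology of $Y$. Equivalently, and more convenient for geometric work, I would pass to the regular cover $\tilde Y \to Y$ corresponding to the normal closure of $\pi_1(X) \subset \pi_1(Y) \cong \pi_1(N) * \pi_1(X)$: topologically $\tilde Y$ is obtained from the contractible universal cover $\tilde N$ by removing one open ball at each point of a fixed $\pi_1(N)$-orbit and gluing a copy of $X \setminus B$ at each such ball. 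The lifted metric $\tilde g$ is complete and of positive scalar curvature, and $\pi_1(N)$ acts by isometries on $(\tilde Y, \tilde g)$ with quotient $(Y, g)$.

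Next I would run a $\mu$-bubble dimension descent in $(\tilde Y, \tilde g)$. In a large relatively compact domain $\Omega \subset \tilde Y$ containing a fundamental domain for the $\pi_1(N)$-action, choose a prescribed mean-curvature function $h$ on $\Omega$ with appropriate blow-up on $\partial \Omega$ (so that minimizers stay in the interior), together with, for $n = 4, 5$, a warping function pulled back from $\tilde N$ of the slab type used by Chodosh--Li in their closed-aspherical proof. A stable $\mu$-bubble $\Sigma$ then inherits, via the Chodosh--Li stability identity, a positive-scalar-curvature conformal structure one dimension down. Iterating ($n-2$ rounds of descent), one eventually obtains a closed surface $F$ carrying a PSC conformal class, hence a disjoint union of $2$-spheres by Gauss--Bonnet. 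On the other hand, the setup forces $F$ to represent a homologically essential class with respect to the classifying map to $B\pi_1(N)$, which is incompatible with $F$ being a union of $2$-spheres. This is the desired contradiction.

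I expect the main obstacle to be precisely this localization step on the non-compact cover $\tilde Y$. In the closed aspherical case the warping and $\mu$-bubble can be defined globally on $N$ via the classifying map, and the resulting submanifold is automatically essential. Here the aspherical information lives only in the $\tilde N$-direction, while the $X \setminus B$ summands glued in are topologically uncontrolled. The hard part is designing $\Omega$, the mean-curvature prescription $h$, and the warping function simultaneously so that (i) the $\mu$-bubble exists and is supported inside $\Omega$, (ii) it separates $\Omega$ so as to detect the aspherical topology of $\tilde N$ rather than being absorbed by an $X \setminus B$ summand or escaping to infinity along the non-compact end, and (iii) these properties propagate through every round of dimension descent. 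Carrying this out uniformly in the three dimensions $n \in \{3, 4, 5\}$ is, I anticipate, the main technical content of the argument.
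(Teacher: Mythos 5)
Your starting point agrees with the paper: pass to the cover $\tilde Y=\tilde N\#_{\pi_1(N)}X$ with the lifted complete PSC metric, and run a minimal-hypersurface/$\mu$-bubble dimension descent there. But the proposal stops exactly where the proof has to begin. You correctly identify the obstacle --- the descended surfaces may be ``absorbed by an $X\setminus B$ summand or escape to infinity'' --- and then defer it as ``the main technical content''; no mechanism is offered to overcome it. The underlying analytic problem is that the scalar curvature of $g$, though positive, can decay to zero along the end(s) of $X$, so there is no uniform lower bound with which to control the diameter of stable minimal surfaces or $\mu$-bubbles inside the infinitely many copies of $X\setminus B$ in $\tilde Y$. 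The paper's resolution has two ingredients you would need to supply. First, one works in \emph{relative} homology $H_*(\tilde Y,\tilde X_\ve)$ modulo the ends introduced by $X$; excision and contractibility of $\tilde N$ give $H_k(\tilde Y,\tilde X_\ve)=0$ for $k\ge 2$, so the uncontrolled portions of chains lying in the ends simply do not enter the topological argument. Second, and this is the genuinely new idea, one installs ``avoidance hypersurfaces'' $\Lambda_2,\Lambda_4$ deep inside each end and proves, via quantitative warped band-width estimates (Propositions \ref{Prop: closed} and \ref{Prop: compact}), that every slicing or dicing surface arising in the descent can meet at most one end; this converts the inradius bound of Lemma \ref{Lem: inradius} into an \emph{extrinsic} diameter bound for the part of each surface lying in $\tilde Y_{\Lambda}$, which is what the filling argument actually needs. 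Without something playing this role, step (ii) of your plan fails: a stable surface of finite inradius threading several ends can still have infinite diameter.

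Your endgame is also not the one that closes the argument, even in the closed case you are modelling it on. Iterating the descent $n-2$ times does not terminate in a closed PSC surface whose essentiality gives a contradiction; for $n=5$ the codimension-two object $M_{n-2}$ is a $3$-manifold, and the contradiction in both \cite{CL20} and this paper comes from a \emph{filling} argument: one constructs a proper line $\tilde\sigma\subset\tilde N_\ve$ and a hypersurface $\tilde M_{n-1}$ meeting it with non-zero algebraic intersection, then uses a slice-and-dice decomposition of $M_{n-2}$ into blocks of controlled extrinsic diameter to fill $M_{n-2}$ by a relative chain supported far from $\tilde\sigma$; the resulting relative $(n-1)$-cycle has non-zero intersection with $\tilde\sigma$, contradicting $H_{n-1}(\tilde Y,\tilde X_\ve)=0$. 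The diameter control for the blocks requires a further combinatorial analysis of the slice-and-dice graph (Lemma \ref{Lem: combination diameter}) to ensure each connected component of the union of slicing and dicing surfaces essentially lives in a single end, which is again powered by the avoidance hypersurfaces. In short: the covering-space setup and the descent scheme are right, but the proposal is missing the relative-homology framework, the avoidance-hypersurface mechanism for diameter control near infinity, and the correct line-intersection/filling contradiction, so as it stands it is an outline of the difficulty rather than a proof.
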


\begin{remark}\label{Rem: orientable}
Recall that the closed case is already known in \cite{CLL23}, so we just need to handle the case when $X$ is non-compact. By lifting, $N\#X$ has a finite cover which is a (possibly multiple-necked) connected sum of some orientable covers of $N$ and $X$. Then in the proof of Theorem \ref{Thm: main}, we assume without loss of generality that $N$ and $X$ are both orientable.
\end{remark}

As a corollary,
the answer to Gromov's question is \emph{no} since any closed aspherical manifold with punctures is just the connected sum of a closed aspherical manifold and an $n$-sphere with punctures.

Beyond the obstruction from non-compact connected sums Gromov \cite[p. 252]{Gro19} also made a non-compact domination conjecture characterizing the obstruction for positive scalar curvature on non-compact manifolds having certain domination property. To be more precise he proposed the following definition and conjecture:

\begin{definition}[Quasi-proper map {\cite[p. 18]{Gro19}}]\label{Defn: quasi-proper}
Let $M^{n}$ and $N^{n}$ be orientable $n$-manifolds. A continuous map $f:M \to N$ is said to be quasi-proper if for all proper maps $\phi:\mathbb{R}_{+}\to M$, the composed map $f\circ\phi:\mathbb{R}_{+}\to N$ is either proper or converges to a point in {$N$} for $t\to\infty$.
\end{definition}

\begin{conjecture}[Non-compact domination conjecture]\label{Conj: domination}
    If a closed orientable $n$-manifold $N$ cannot be dominated by closed manifolds with positive scalar curvature, then it also cannot be dominated by complete manifolds with positive scalar curvature. Here $N$ is dominated by $M$ means that there is a quasi-proper map from $M$ to $N$ with non-zero degree.
\end{conjecture}
As a further attempt, we extend the underlying compact-to-complete principle behind Gromov's non-compact domination conjecture to the partial classification results for closed manifolds with positive scalar curvature established in \cite{GL83,CLL23}. Recall
\begin{theorem}
[\cite{GL83,CLL23}]\label{Thm: compact partial classification}
    Let $N^n$, $n\in\{3,4,5\}$, be a closed orientable $n$-manifold, whose universal cover is $(n-2)$-connected. Suppose there is a closed orientable Riemannian manifold $(M^n,g)$ with positive scalar curvature admitting a non-zero degree map $f:M\to N$. Then a finite cover of $N$ is homotopy equivalent to $\mathbb{S}^n$ or connected sums
of $\mathbb{S}^{n-1} \times \mathbb{S}^1$.
\end{theorem}

Since we have to deal with non-compact manifolds in our extension, let us define what it means for a quasi-proper map between non-compact manifolds to have non-zero degree throughout this paper.
\begin{definition}\label{Defn: degree}
	Let $M^{n}$ and $N^{n}$ be orientable $n$-manifolds (possibly non-compact). A quasi-proper map $f:M\to N$ is said to have non-zero degree if
	\begin{itemize}\setlength{\itemsep}{1mm}
	
	\item $S_\infty$ consists of discrete points, where
 $$S_\infty=\bigcap_{K\subset M \text{ compact}} \overline{f(M-K)};$$
	\item the composed map
	$$H_n^{\mathrm{lf}}(M)\xrightarrow{i_*} H_n^{\mathrm{lf}}(f^{-1}(N-S_\infty))\xrightarrow{f_*} H_n^{\mathrm{lf}}(N-S_\infty)$$
	is non-zero, where $H_{\ast}^{\mathrm{lf}}(\cdot)$ are the locally-finite singular homology groups {with $\mathbb{Z}$ coefficients} and $i_{*}$ is the restriction map (see e.g. \cite[Lecture 6, Section 1.3]{Mor18}).
\end{itemize}
\end{definition}
After removing the compactness assumption on $M$ in Theorem \ref{Thm: compact partial classification} we can show the following
\begin{theorem}\label{Thm: partial classification}
    Let $N^n$, $n\in\{3,4,5\}$, be a closed orientable $n$-manifold, whose universal cover is $(n-2)$-connected. Suppose there is a complete orientable Riemannian manifold $(M^n,g)$ with positive scalar curvature admitting a non-zero degree map $f:M\to N$. Then a finite cover of $N$ is homotopy equivalent to $\mathbb{S}^n$ or connected sums
of $\mathbb{S}^{n-1} \times \mathbb{S}^1$.
\end{theorem}
In particular, we partially solve Gromov's non-compact domination conjecture.
\begin{corollary}
Conjecture \ref{Conj: domination} holds for closed aspherical $n$-manifolds $N$ for $n=3,4,5$.
\end{corollary}

\begin{remark}
When $n=3$, we can further conclude that the manifold $N$ in Theorem \ref{Thm: partial classification} has no aspherical factor in its prime decomposition. This is claimed by Gromov \cite[p. 140]{Gro19} without details.
\end{remark}

Moreover, if $(M,g)$ is only assumed to have nonnegative scalar curvature, we can establish the following rigidity result.
\begin{theorem}\label{Thm: mapping rigidity}
Let $N^n$, $n\in\{3,4,5\}$, be a closed orientable $n$-manifold, whose universal cover is $(n-2)$-connected. Suppose there is a complete orientable Riemannian manifold $(M^n,g)$ with nonnegative scalar curvature admitting a non-zero degree map $f:M\to N$. Then
\begin{itemize}\setlength{\itemsep}{1mm}
    \item either a finite cover of $N$ is homotopy equivalent to $\mathbb{S}^n$ or connected sums of $\mathbb{S}^{n-1} \times \mathbb{S}^1$;
    \item or $M$ and $N$ are both closed aspherical, and the metric on $M$ is flat.
\end{itemize}
\end{theorem}

\subsection{Outline of the proof}
\subsubsection{On the proof of Theorem \ref{Thm: main}}
Our proof of Theorem \ref{Thm: main} follows exactly the same strategy as when dealing with the aspherical conjecture, which we shall recall first.

As we mentioned before, the main difficulty in dealing with aspherical manifolds is the lack of homological information (recall that the definition is purely homotopic). To create certain non-trivial homology class the strategy is a chain-closing program first raised in \cite{SY87} and further developed in \cite{CL20}. Let $(N,g)$ be a closed aspherical Riemannian manifold and $(\tilde N,\tilde g)$ be its universal cover. In particular, $\tilde N$ is non-compact and one can therefore find a geodesic line $\tilde\sigma:(-\infty,+\infty)\to (\tilde N,\tilde g)$. Notice that the boundary of a tubular neighborhood of the ray $\tilde\sigma|_{[0,+\infty)}$ just provides a locally-finite chain having non-zero intersection number with $\tilde\sigma$. If one can close this chain without creating further intersections, then the newly obtained cycle represents a non-trivial homology class and this leads to a contradiction to the contractibility of $\tilde N$.

In practice, the main idea for chain-closing is by cutting and pasting. Intuitively the boundary of a tubular neighborhood of the ray $\tilde\sigma|_{[0,+\infty)}$ has to be an infinitely long cylinder at infinity. Then one can just cut the cylinder at a finite length and then obtain a chain with the boundary from cutting. By taking both the tubular neighborhood and the cutting length large enough, the chain boundary can be guaranteed to be far away from the line $\tilde\sigma$. Now one wants to glue another relatively small chain disjoint from the line $\tilde\sigma$ along the chain boundary so that the desired homologically non-trivial cycle is obtained. The search of the relatively small chain can be interpreted as a quantitative filling problem with prescribed chain boundary, where the \emph{uniformly} positive scalar curvature comes into play.
After applying Gromov's $\mu$-bubble method to the minimizing hypersurface obtained from solving the Plateau problem with prescribed chain boundary, one can reduce the original filling problem into a new one where the prescribed boundary becomes a codimension-two closed submanifold with its stabilized positive scalar curvature (see Definition \ref{Defn: stabilized curvature}) bounded from below by a positive constant depending on the positive infimum $\inf R(g)$ of the scalar curvature.

As a cover of a closed manifold, the universal cover $(\tilde N,\tilde g)$ satisfies the following uniform filling property: there is a positive function $F=F(r)$ depending on $(N,g)$ such that if a boundary chain $C$ is supported in some geodesic ball $B_r(p)$ then we can find a chain $\Gamma$ supported in a larger geodesic ball $B_{F(r)}(p)$ with the same center such that $\partial\Gamma=C$. From this fact the key to solve the new quantitative filling problem is to break the codimension-two submanifold into small boundary chains with uniformly bounded diameters, which was finally carried out by Chodosh and Li through their slice-and-dice argument (when the underlying manifold $N$ has dimension five).

In our attempt to prove Theorem \ref{Thm: main} with the same strategy, the main difficulty is the lack of uniformly positive scalar curvature on non-compact manifolds, which was the key to solve the quantitative filling problem through the slice-and-dice argument. The trouble appears around infinity, where the lower bound of scalar curvature decays to zero and consequently the blocks coming from the slice-and-dice argument have no uniform control on their diameters.
To handle this issue a natural idea is to use relative homology theory, where the blocks around infinity without control can be eliminated in a topological way. Let us give some details on this point. Recall that we are dealing with the non-compact connected sum $(Y=N\#X, g)$, where $N$ is closed and aspherical. Since $N$ has contractible universal cover $\tilde N$, we consider the corresponding cover $\tilde Y=\tilde N\#_{\pi_1(N)}X$ of $Y$ in our case. As stated above, the scalar curvature could decay to zero on these $X$-copies inside $\tilde Y$. So we make the decomposition $$
\tilde Y=\tilde N_{\ve}\cup\left(\bigcup_i \tilde X_{\ve,i}\right),$$
where
$$
\tilde N_\ve=\tilde N-\left(\bigcup_i B_i\right)\mbox{ and }\tilde X_{\ve,i}\approx X_\ve:=X-B.
$$
{Here the subscript $\ve$ is a fixed small positive constant (we refer the reader to Subsection \ref{subsec: set-up and notations} for the precise definitions of the above notations).} Then we consider the pair $(\tilde Y,\bigcup_i\tilde X_{\ve,i})$, which can be verified to satisfy very similar homological properties as a contractible manifold (see Lemma \ref{Lem: excision}).

As planned we set a similar chain-closing program as before except that relative cycles are considered now. Similar as before, we construct a proper line $\tilde\sigma:(-\infty,+\infty) \to \tilde N_\ve$ by lifting (see Lemma \ref{geodesic}) and take the boundary of a tubular neighborhood of $\tilde \sigma|_{[0,+\infty)}$. Through cutting at a finite length we can obtain a hypersurface $\tilde M_{n-1}$ with boundary. Let $M_{n-1}$ be the minimizing hypersurface obtained from solving the Plateau problem with prescribed boundary $\partial\tilde M_{n-1}$. Then using $\mu$-bubble method we can construct a codimension-two closed submanifold $M_{n-2} \subset M_{n-1}$. By adjusting the size of the tubular neighborhood as well as the cutting length we can guarantee
\begin{itemize}\setlength{\itemsep}{1mm}
\item $\dist(M_{n-2},\tilde \sigma)>L$ for arbitrarily large $L>0$;
\item $M_{n-2}$ has stabilized scalar curvature no less than $R(g)-\mu_{\mathrm{loss}}$ for arbitrarily small $\mu_{\mathrm{loss}}>0$.
\end{itemize}
The \emph{essential} difference here is that no matter how small the constant $\mu_{\mathrm{loss}}$ is, the stabilized scalar curvature of $M_{n-2}$ can be negative somewhere since the scalar curvature $R(g)$ does not have a positive lower bound on $Y$. Nevertheless, we still hope to solve the following relative quantitative filling problem for $M_{n-2}$: find a chain $\Gamma$ disjoint from $\tilde\sigma$ such that $\partial\Gamma=M_{n-2}$ modulo $\bigcup_i\tilde X_{\ve,i}$.

Notice that the core region $\tilde N_\ve$, as the cover of a compact region of $Y$, satisfies similar quantitative filling property (see Lemma \ref{Lem: quantitative filling}). In order to solve the quantitative filling problem we can take the same idea to break $M_{n-2}$ into small relative boundary chains based on the slice-and-dice argument, but the situation is different now. Although the use of relative homology theory appears to be very effective to handle the issue caused by the lack of uniform diameter control for the blocks around infinity of $\tilde X_{\ve,i}$, we still encounter great difficulty in obtaining diameter bounds for the remaining blocks that stay away from the infinity of $\tilde X_{\ve,i}$. The challenge comes from the fact that only inradius estimate of a compact surface with boundary could be derived from its stabilized positive scalar curvature (see Lemma \ref{Lem: inradius}). In the case of solving the aspherical conjecture, it suffices to deal with closed surfaces, where we can obtain diameter bounds by removing small disks and estimate the inradii. While in our case we have to deal with compact surfaces with boundary (especially because we need to handle the submanifold $M_{n-2}$ with negative-scalar-curvature regions removed), whose diameters unfortunately cannot be controlled by their inradii. For example, a cylinder with disks periodically removed  has finite inradius but its diameter could be arbitrarily large. This difficulty requires us to introduce some new ideas and make a necessary refinement of Chodosh--Li's slice-and-dice argument.

Before a detailed description of our refinement, let us first describe Chodosh--Li's slice-and-dice argument from \cite{CL20}. Recall that the chain-closing program when $N$ has dimension five was eventually reduced to a quantitative filling problem for a closed $3$-manifold $M_3$ with stabilized uniformly positive scalar curvature. Two steps, slicing and dicing, are designed in \cite{CL20} to break $M_3$ into small blocks. Since the diameter estimate from stabilized positive scalar curvature can only be established for surfaces (but not for the $3$-dimensional blocks), one needs the slicing step to reduce the topological complexity so as to obtain diameter estimates of the blocks using diameter estimates of their boundary surfaces.
In practice, one constructs finitely many closed minimizing surfaces $S_k$ (called slicing surface) from non-trivial classes in $H_2(M_3)$ and slices them off such that the resulting compact $3$-manifold, denoted by $\hat M_3$, satisfies the property that the map $H_2(\partial\hat M_3) \to H_2(\hat M_3)$ is surjective. As in the dicing step, $\hat M_3$ is diced into finitely many blocks with bounded thickness along finitely many $\mu$-bubbles $D_l$ (called dicing surface). The previous slicing step guarantees that for each block the thickness is measured from exactly one dicing surface, for which the diameter of the blocks can be derived from that of the slicing surfaces (obtained from stabilized uniformly positive scalar curvature) as well as the bounded thickness.

Back to our case, recall from previous paragraphs that when $n=5$ we have to deal with a closed $3$-manifold $M_3$ with \emph{islands} (short for those regions where the stabilized scalar curvature is negative). After repeating the slice-and-dice argument we can also obtain slicing surfaces $S_k$ and dicing surfaces $D_l$ with islands. Let us just restate the difficulty here: only inradius estimate is available for these surfaces with islands removed, which gives no control on diameter if the intrinsic distances between islands are very large. Our key observation, roughly speaking, is that: if all the islands of a slicing or dicing surface lie in exactly one end $\tilde X_{\ve,i_0}$, then we can still obtain an \emph{extrinsic-diameter} bound of this surface (modulo $\bigcup_i\tilde X_{\ve,i}$). Therefore, we need to establish avoidance criteria to prevent slicing and dicing surfaces from touching two different ends.

The avoidance criteria are closely related to the fact that if a complete surface has two ends, then it cannot admit any metric with stabilized positive scalar curvature. As a quantitative version we can show that if a surface has a core region with stabilized scalar curvature bounded from below by a positive constant $\sigma$, then this core region cannot extend along two different directions with stabilized nonnegative scalar curvature beyond distance $L=L(\sigma)$. Then our trick is to push the end $X_\ve$ much closer to infinity such that the ends $\tilde X_{\ve,i}$ are at a far enough distance away from each other. In practice, we set an appropriate hypersurface $\Lambda$ separating $\partial X_\ve$ and the infinity, and take the new end  $X_\Lambda$ to be the region outside $\Lambda$ (correspondingly the end $\tilde X_{\ve,i}$ is adjusted to $\tilde X_{\Lambda,i}$ with boundary $\tilde \Lambda_{,i}$).
As illustrated in Figure \ref{Fig: avoidance-criterion}, we can set the (avoidance) hypersurface $\Lambda_2$ sufficiently deep (i.e. $d_0$ large enough) in $X_\ve$ such that the slicing surfaces $S_k$ and dicing surfaces $D_l$ cannot touch two ends at the same time (see also Proposition \ref{Prop: closed}). Besides a single slicing surface or dicing surface we also need to derive an extrinsic-diameter bound for connected components of the slice-and-dice trace (i.e. the union of all $S_k$ and $D_l$) and then for the $3$-dimensional blocks (modulo $\bigcup_i\tilde X_{\ve,i}$), where more involved avoidance criteria have to be established for the combinations of several slicing and dicing surfaces
 (see Proposition \ref{Prop: compact} and \ref{Prop: slice and dice}).
Based on our new avoidance criteria we can finally obtain the desired diameter estimates (modulo $\bigcup_i\tilde X_{\ve,i}$) and solve the relative quantitative filling problem for $M_3$.

\begin{figure}[htbp]
\centering
\includegraphics[width=10cm]{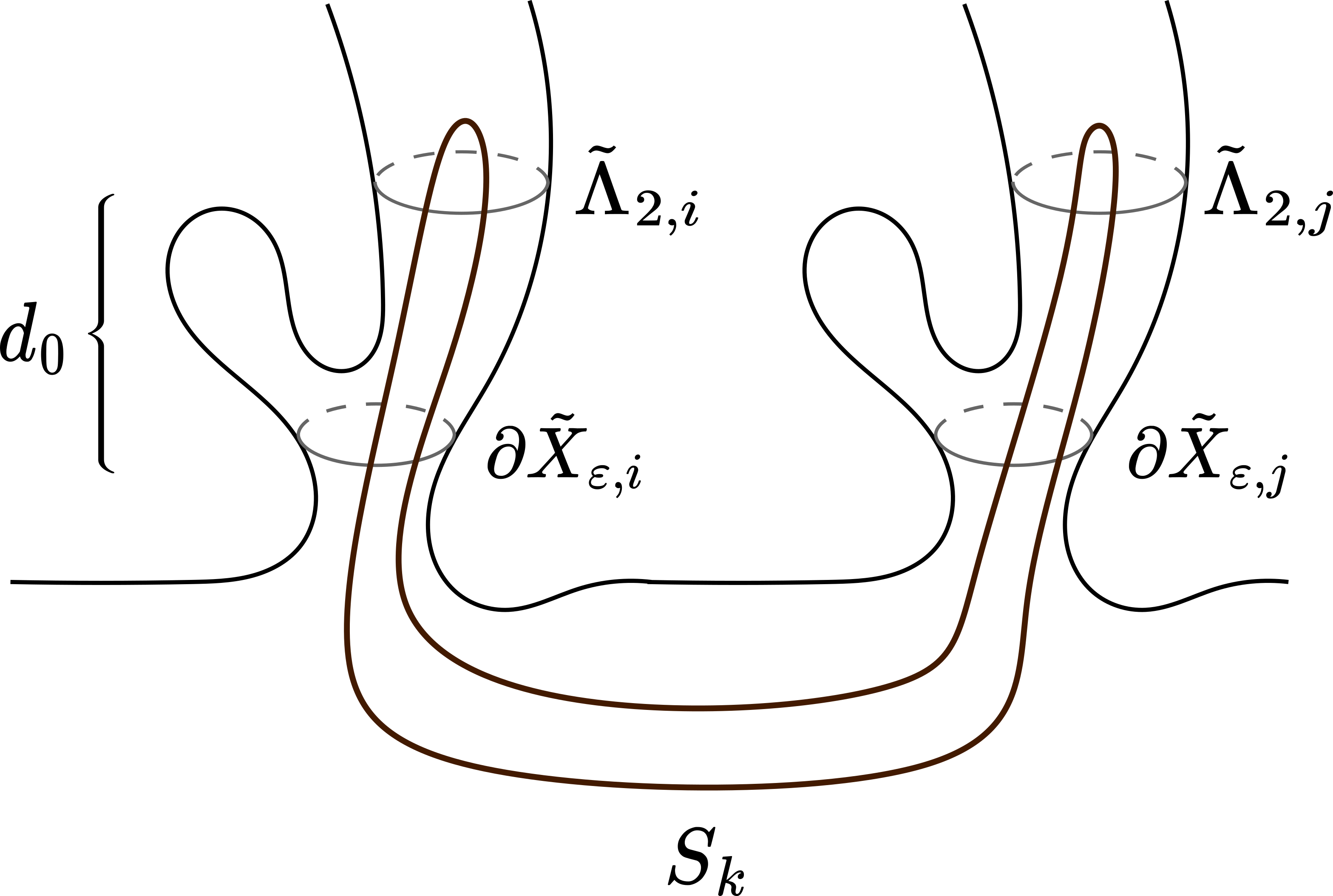}
\caption{The slicing surface $S_k$ cannot touch two ends $\tilde X_{\Lambda_2,i}$ and $\tilde X_{\Lambda_2,j}$ when $d_0$ is large enough, so the situation in the figure will not happen.}
\label{Fig: avoidance-criterion}
\end{figure}

\subsubsection{On the proof of Theorem \ref{Thm: partial classification} and \ref{Thm: mapping rigidity}}
Next let us sketch the proof for Theorem \ref{Thm: partial classification}. First we recall the quantitative-topology description from \cite{CLL23}. Suppose a closed Riemannian $n$-manifold $N$ satisfies the following quantitative filling property: in the universal cover any closed embedded codimension-two submanifold can be filled in its $L$-neighborhood (up to multiplicity) for some universal constant $L$ independent of the choice of the submanifold. Then a finite cover of this closed manifold $N$ is homotopy equivalent to either $\mathbb{S}^n$ or connected sums of $\mathbb{S}^{n-1} \times \mathbb{S}^1$.

Given this fact our goal is to verify the quantitative filling property for a closed manifold $N$ when it is dominated by a complete Riemannian manifold $M$ with positive scalar curvature. Let us still focus on the most complicated case when $n=5$. The key point is that our modified slice-and-dice argument actually yields the desired filling property. To see this we start with an arbitrary $3$-manifold $\Sigma$ in the universal cover $\tilde N$. To make use of positive scalar curvature we just pull $\Sigma$ back to $\hat\Sigma$ on some cover $\hat M$ of $M$. Based on our modified slice-and-dice argument we can decompose the pull-back $\hat\Sigma$ into blocks with uniformly bounded size and blocks around infinity. By Definition \ref{Defn: degree} blocks around infinity do not make any contribution to the filling since they are all mapped to small balls. On the other hand, the images of the blocks with uniformly bounded size can be filled quantitatively using the $(n-2)$-connectedness of $\tilde N$, which gives the desired quantitative filling property.

The proof of Theorem \ref{Thm: mapping rigidity} is based on a delicate analysis of the topology of closed Ricci-flat manifolds (see Proposition \ref{Prop: mapping rigidity general n}).

\bigskip

The organization of this paper is as follows. In Section \ref{Preliminaries}, we introduce set-up and notations, and collect some preliminary results that will be used throughout this work. In Section \ref{Proof of main theorem}, we prove our main result, Theorem \ref{Thm: main}. In Section \ref{Sec: compact-to-complete}, we prove
Theorem \ref{Thm: partial classification} and \ref{Thm: mapping rigidity}. In Appendix \ref{construction of rho}, we give the explicit construction of some function which will be used in the proof of Proposition \ref{Prop: compact}.

\bigskip

{\bf Acknowledgments. }We thank Professor Otis Chodosh for bringing this question to our attention and for his many useful communications, and Professor Lu Wang for her interest and helpful suggestions. {We also thank the referees for their helpful comments. 
The first-named author was partially supported by the National Science Foundation under Grant No. DMS-1928930, while in residence at the Simons Laufer Mathematical Sciences Institute (formerly MSRI) during the Fall 2024 Semester. The second-named author was partially supported by National Key R\&D Program of China 2024YFA1014800 and 2023YFA1009900, NSFC grants 12471052 and 12271008, and the Fundamental Research Funds for the Central Universities, Peking University. The third-named author was partially supported by National Key R\&D Program of China 2023YFA1009900, NSFC grant 12401072, and the start-up fund from Westlake University.}

\section{Preliminaries}\label{Preliminaries}
\subsection{Set-up and notations}\label{subsec: set-up and notations}
As mentioned in Remark \ref{Rem: orientable}, we let $N$ be a smooth closed orientable aspherical $n$-dimensional manifold, and we let $X$ be a smooth non-compact orientable manifold.
Let $\ti N$ denote the universal cover of $N$ with $\pi:\ti N\to N$ the covering map.
Let $p \in N$ be the point where we perform the connected sum operation.
For convenience, we label $\pi^{-1}(p)$ as follows:
$$
\pi^{-1}(p)= \{p_{i}\mid i\in\pi_1(N)\}.
$$
To better describe the neighborhood of $p$ in $N$, we fix a smooth metric $h$ on $N$. Take $\ve$ to be a small positive constant such that
\begin{itemize}\setlength{\itemsep}{1mm}
\item the exponential map $\exp_p^h:B_{\ve}\to B^h_{\ve}(p)$ is diffeomorphism;
\item $B^h_{\ve}(p)$ gives a fundamental neighborhood of $p$, namely we have
$$\pi^{-1}(B_{\ve}^h(p))=\bigsqcup_{i}B^{\ti h}_{\ve}(p_i),$$
where $\ti h$ is the lifted metric of $h$ on $\ti N$.
\end{itemize}
For convenience, we denote
$$
N_{\ve}:=N-B^h_{\ve}(p) \, \mbox{ and } \, \ti N_\ve:=\ti N-\bigcup_iB^{\ti h}_\ve(p_i).
$$
To perform the connected sum we take $X_\ve = X-B$, where $B$ is a small $n$-ball in $X$.
Then without loss of generality  we can take our manifold $Y$ to be $$Y = N \# X= N_\ve \cup X_\ve,$$ where $N_\ve$ and $X_\ve$ are glued on the boundary sphere.
Let
$$\ti Y = \ti N \#_{\pi_1(N)} X$$ and we also denote the covering map $\ti Y \to Y$ by $\pi$.

In the following, we work with a complete metric $g$ on $Y$ with positive scalar curvature and denote by $\ti g$ its lift on $\ti Y$. Our goal is to deduce a contradiction.

Since $g$ is complete, for any smooth closed hypersurface $\Lambda \subset X_\ve$
that comes from smoothing out a level set of the distance function $\dist_g(\cdot,S^h_{\ve}(p))$ with $S^h_{\ve}(p):=\partial B^h_{\ve}(p)$,
it divides $X_\ve$ into an unbounded part and a bounded, ``annulus-like" part. For convenience, we use $X_\Lambda$ to denote the unbounded part of $X$ with boundary $\Lambda$
and let $Y_\Lambda: = Y - X_\Lambda$ (see Figure \ref{manifold_Y}).

In the covering space $\tilde Y$ we write
$$\tilde X_\ve:=\pi^{-1}(X_{\ve}),\,\tilde X_\Lambda:=\pi^{-1}(X_{\Lambda}),\,\tilde Y_\Lambda:=\pi^{-1}(Y_{\Lambda}).$$
Let $\tilde X_{\ve,i}$ denote the component of $\ti X_\ve$ whose boundary is $\partial\tilde X_{\ve,i}=S^{\tilde h}_{\ve}(p_i)$. Denote
$$
\ti X_{\Lambda,i}=\ti X_{\Lambda}\cap \ti X_{\ve,i}\mbox{ and }\tilde \Lambda_{,i}=\pi^{-1}(\Lambda)\cap \ti X_{\ve,i}.
$$
For the reader's convenience, we collect all these notations in Figure \ref{covering_of_Y}.

\begin{figure}[htbp]
\centering
\includegraphics[width=\linewidth]{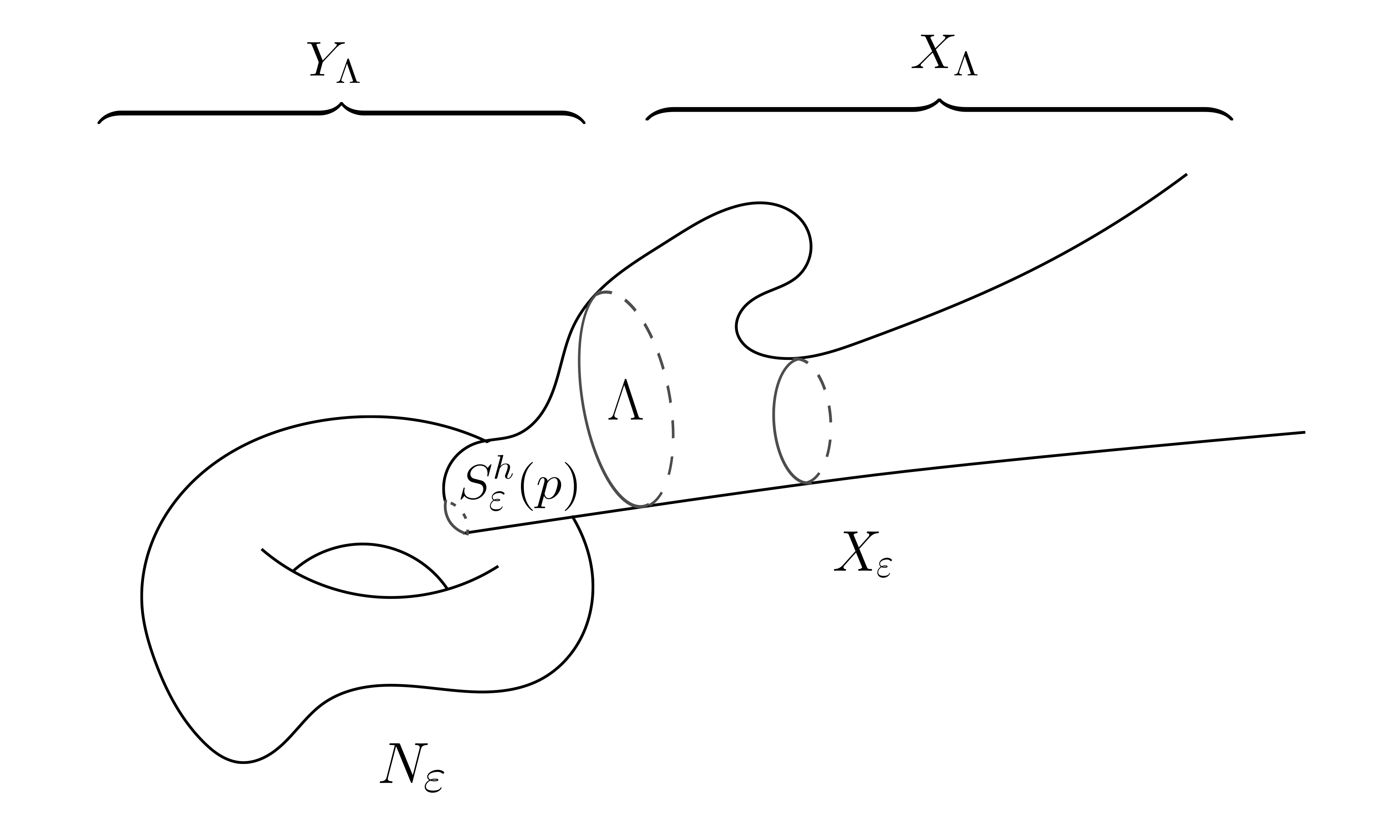}
\caption{The connected sum $Y=N\# X$}
\label{manifold_Y}
\end{figure}
\begin{figure}[htbp]
\centering
\includegraphics[width=\linewidth]{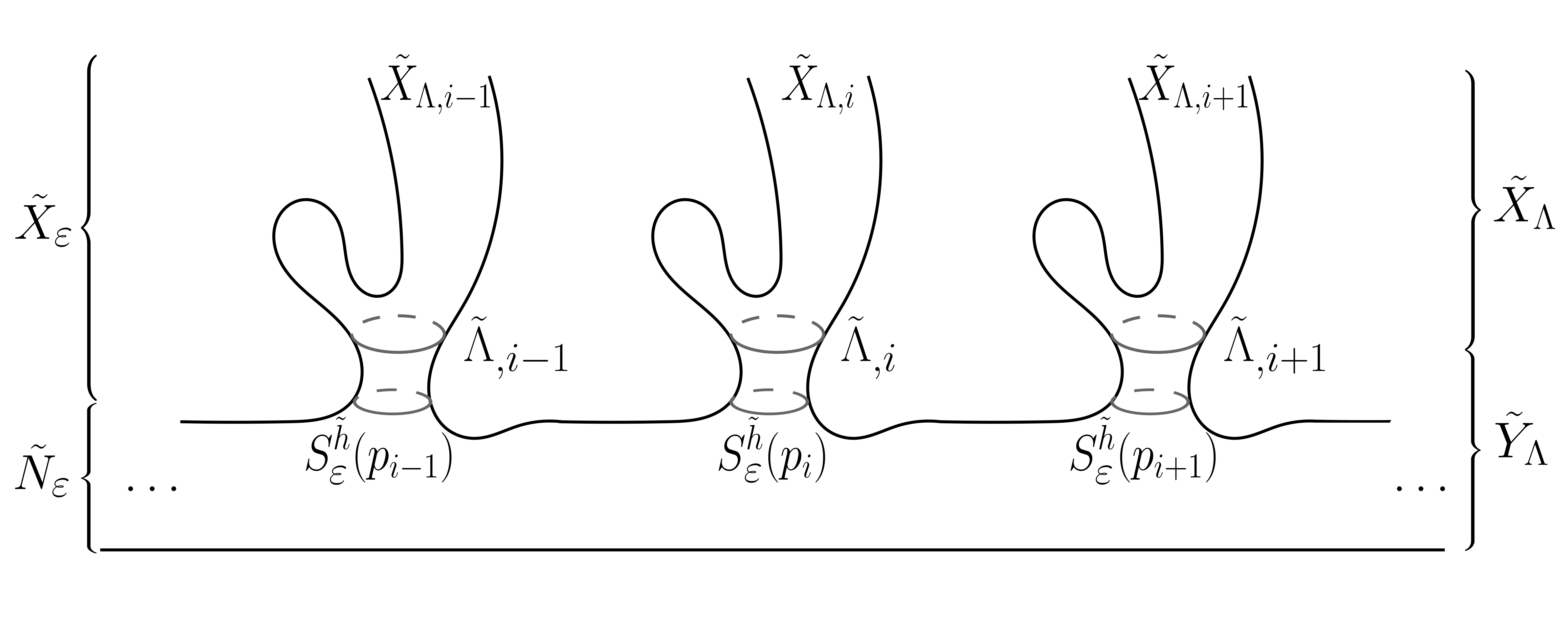}
\caption{The covering space $\ti{Y}=\ti{N}\#_{\pi_1(N)}X$}
\label{covering_of_Y}
\end{figure}

By smoothing out a level-set of the distance function $\dist_g(\cdot,S^h_{\ve}(p))$, we get a smooth closed hypersurface $\Lambda_1 \subset X_\ve$ homologous to $\partial X_\ve = S^h_{\ve}(p)$, and we pick a positive constant $d_{\Lambda_1}$ such that
\begin{equation}\label{Eq: 1}
\dist_g(\Lambda_1,S^h_{\ve}(p))\geq 6d_{\Lambda_1}.
\end{equation}

\subsection{Line construction}\label{subsec: line}
In this subsection we show that we can find a proper line contained in $\tilde{N}_\ve$. Recall
\begin{lemma}[Lemma 6 of {\cite[first version]{CL20}}] \label{Lem: pi_1}
Suppose $N$ is a smooth closed aspherical $n$-dimensional manifold. Then $\pi_1(N)$ is an infinite group, and every nontrivial element of it has infinite order.
\end{lemma}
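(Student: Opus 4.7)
The plan is to leverage the key consequence of asphericity: the universal cover $\ti N$ is contractible, so $N$ itself is a model for the classifying space $B\pi_1(N)$, and similarly any intermediate cover $\ti N / H$ (for $H\le \pi_1(N)$ acting freely) is a $K(H,1)$. I will combine this with the fact that a closed $n$-manifold has finite cohomological dimension $n$ and nonzero top-dimensional homology with appropriate coefficients. Both assertions of the lemma then fall out by short contradiction arguments.

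For the first assertion ($\pi_1(N)$ is infinite), I will suppose for contradiction that $\pi_1(N)$ is finite. Then the covering map $\ti N\to N$ has finite degree, so $\ti N$ is itself a closed (compact, boundaryless) $n$-manifold. By Poincar\'e duality with $\mathbb{Z}/2$ coefficients, one has $H_n(\ti N;\mathbb{Z}/2)\ne 0$. But $\ti N$ is contractible by asphericity, forcing $H_k(\ti N;\mathbb{Z}/2)=0$ for all $k\ge 1$. This is the desired contradiction.

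For the torsion-freeness, suppose some $g\in\pi_1(N)$ has finite order $k>1$, and consider the cyclic subgroup $H=\langle g\rangle\cong \mathbb{Z}/k$. Since $H$ is a subgroup of the deck transformation group, it acts freely and properly discontinuously on $\ti N$, so the quotient $Q:=\ti N/H$ is a closed $n$-manifold. Contractibility of $\ti N$ then identifies $Q$ with a $K(\mathbb{Z}/k,1)$. Choose any prime $p$ dividing $k$; standard group cohomology gives $H^j(B\mathbb{Z}/k;\mathbb{F}_p)\ne 0$ for every $j\ge 0$. On the other hand, $Q$ is a closed $n$-manifold, so $H^j(Q;\mathbb{F}_p)=0$ for $j>n$. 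This is a contradiction, and so no such $g$ can exist.

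There is no real obstacle here; the argument is essentially a citation of classical facts. The only thing to be a bit careful about is choosing $\mathbb{Z}/2$ or $\mathbb{F}_p$ coefficients rather than integral coefficients, in order to avoid having to worry about orientability of $\ti N$ or of the quotient $Q$ (which could well be nonorientable even when $N$ is orientable, depending on how $g$ acts).
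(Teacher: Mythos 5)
Your argument is essentially the same as the paper's: for the first claim, the contractible universal cover would have trivial top homology yet a closed manifold cannot; for the second, a finite cyclic subgroup would make $\ti N/\langle g\rangle$ a finite-dimensional model for $K(\mathbb{Z}/k,1)$, contradicting the fact that $\mathbb{Z}/k$ has nonvanishing (co)homology in infinitely many degrees. One correction, though: your assertion that $Q=\ti N/H$ is a \emph{closed} $n$-manifold is false. The covering $Q\to N$ has degree $[\pi_1(N):H]$, which is infinite because $\pi_1(N)$ is infinite (by the first part) while $H$ is finite; hence $Q$ is non-compact, and you cannot invoke Poincar\'e duality or compactness for it. The conclusion you need, namely $H^j(Q;\mathbb{F}_p)=0$ for $j>n$, nevertheless holds for any $n$-manifold, compact or not (an $n$-manifold has the homotopy type of a CW complex of dimension at most $n$), and this is exactly the fact the paper uses when it writes $H_l(\ti N/C)=0$ for $l>n$. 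With that justification substituted, your proof is correct and coincides with the paper's.
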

\begin{proof}
We include the proof for completeness. Since $\tilde{N}$ is contractible we have $H_n(\tilde{N}) = 0$. Any closed orientable connected $n$-manifold $M$ satisfies $H_n(M) = \mathbb Z$, so $\tilde{N}$ is non-compact. Thus, $\pi_1(N)$ is an infinite group.

Take any element $[\sigma] \in \pi_1(N)$. Suppose to the contrary that $[\sigma]$ has finite order $k$. Let $C$ be the cyclic group generated by $[\sigma]$. Then the manifold $\ti{N}/C$ is an Eilenberg--MacLane space $K(\mathbb{Z}_k, 1)$, which is homotopy equivalent to $S^\infty/\mathbb{Z}_k$. This is impossible, as {$H_l(\ti{N}/C,\mathbb{Z}_{k}) = 0$} for all $l > n$.
\end{proof}
Fix a simple closed non-contractible loop $\sigma:[0,1]\to N$ such that $ 0 \neq [\sigma] \in \pi_1(N)$ and $\sigma \cap B^h_{\ve}(p) = \emptyset$.
By Lemma \ref{Lem: pi_1}, $\sigma$ lifts to disjoint lines in the universal cover $\ti N$ and let $\ti \sigma: \mathbb R \to \ti N$ be one lift of $\sigma$ in $\ti N$. By our construction, $\sigma([0,1])\subset N_\ve$, so this induces a loop in $Y$, which we still denote by $\sigma$. Similarly, we also regard $\ti\sigma$ as a line in $\ti Y$.

\begin{lemma}\label{geodesic}
There exists a constant $A$ such that
\begin{itemize}\setlength{\itemsep}{1mm}
\item[(i)] $\ti{\sigma}(\mathbb{R})\subset\ti N_{\ve}$;
\item[(ii)]

$\dist_{\ti{g}}(\ti{\sigma}(t_1),\ti{\sigma}(t_2)) \leq A|t_{1}-t_{2}|$ for all $t_{1},t_{2}\in\mathbb{R}$;
\item[(iii)]
$\lim_{|t_1-t_2|\to +\infty}\dist_{\ti{g}}(\ti{\sigma}(t_1),\ti{\sigma}(t_2))=+\infty$,
\end{itemize}
where $\ti g$ is the lifted metric of $g$ on $\ti Y$.
\end{lemma}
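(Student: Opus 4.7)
The plan is to exploit the equivariance of $\ti{\sigma}$ under the deck transformation group acting on $\ti{Y}$, combined with the properness of that action. I parametrize $\ti{\sigma}:\mathbb{R}\to\ti{N}$ so that $\ti{\sigma}(t+1)=T\cdot\ti{\sigma}(t)$ for all $t$, where $T:=[\sigma]\in\pi_{1}(N)$. By van Kampen on $Y=N\# X$, the covering $\ti{Y}\to Y$ is the regular cover associated to the normal closure of $\pi_{1}(X)$ inside $\pi_{1}(N)\ast\pi_{1}(X)$, so its deck group is $\pi_{1}(N)$ acting freely, properly discontinuously, and by isometries of $\ti{g}$; moreover $\ti{g}$ is complete since $g$ is complete on $Y$.

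Part (i) is immediate from $\overline{\pi}(\overline{\sigma})\cap B_{\ve}^{h}(p)=\emptyset$: the lift $\ti{\sigma}$ to $\ti{N}$ avoids every $B_{\ve}^{\ti{h}}(p_{i})$, placing $\ti{\sigma}(\mathbb{R})$ in $\ti{N}_{\ve}$. Part (ii) follows from equivariance: the speed $t\mapsto|\ti{\sigma}'(t)|_{\ti{g}}$ is continuous on $\mathbb{R}$ and $1$-periodic (since $T$ is an isometry), hence bounded by $A:=\sup_{t\in[0,1]}|\ti{\sigma}'(t)|_{\ti{g}}$, and integrating the speed along $[t_{1},t_{2}]$ gives the Lipschitz bound.

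Part (iii) is the main content, and I argue by contradiction. Suppose there are sequences $t_{1}^{k},t_{2}^{k}$ with $|t_{1}^{k}-t_{2}^{k}|\to\infty$ while $\dist_{\ti{g}}(\ti{\sigma}(t_{1}^{k}),\ti{\sigma}(t_{2}^{k}))\leq R$ for some fixed $R<\infty$. Writing $t_{j}^{k}=m_{j}^{k}+s_{j}^{k}$ with $m_{j}^{k}\in\mathbb{Z}$ and $s_{j}^{k}\in[0,1)$, and putting $m_{k}:=m_{2}^{k}-m_{1}^{k}$, the equivariance and the isometric action of $T$ yield
\[
\dist_{\ti{g}}\bigl(\ti{\sigma}(s_{1}^{k}),\,T^{m_{k}}\ti{\sigma}(s_{2}^{k})\bigr)\leq R, \qquad |m_{k}|\to\infty.
\]
After passing to a subsequence with $s_{j}^{k}\to s_{j}\in[0,1]$, the point $T^{m_{k}}\ti{\sigma}(s_{2}^{k})$ is eventually contained in a fixed closed ball around $\ti{\sigma}(s_{1})$, which is compact by Hopf--Rinow applied to the complete metric $\ti{g}$. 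But $T$ has infinite order in $\pi_{1}(N)$ by Lemma~\ref{Lem: pi_1}, so the orbit $\{T^{m}\ti{\sigma}(s_{2})\}_{m\in\mathbb{Z}}$ is infinite, and proper discontinuity forces only finitely many of its points to lie in any compact set, contradicting $|m_{k}|\to\infty$.

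The main obstacle is to ensure that the orbit argument applies on $\ti{Y}$ rather than on $\ti{N}$, where proper discontinuity of $\pi_{1}(N)$ is classical. The key inputs are that $\ti{Y}\to Y$ is a Riemannian covering with deck group $\pi_{1}(N)$ acting by isometries (hence properly discontinuously, even though $\ti{Y}$ is non-compact with many ends introduced by $X$), and that completeness of $\ti{g}$ converts the bounded-distance hypothesis into a compactness statement via Hopf--Rinow.
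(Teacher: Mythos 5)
Your proposal is correct and follows essentially the same route as the paper: (i) by construction, (ii) by bounding the (periodic) speed of the lift, and (iii) by contradiction, converting a bounded-distance sequence into infinitely many distinct points of a deck-group orbit inside a compact ball (the paper phrases this as the fiber $\pi^{-1}(\overline{\sigma}(0))$ having no convergent subsequence, which is the same discreteness/proper-discontinuity fact, with Hopf--Rinow used implicitly in both arguments).
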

\begin{proof}
(i) follows from the construction of $\ti{\sigma}$. Set $A=\sup_{t\in[0,1]}|\sigma'(t)|_{g}$. Since $\ti{\sigma}$ is the lift of $\sigma$,  $\sup_{t\in\mathbb{R}}|\ti{\sigma}'(t)|_{\ti{g}}=A$. In particular, for any $t_{1},t_{2}\in\mathbb{R}$, we see
\[
\dist_{\ti{g}}(\ti{\sigma}(t_1),\ti{\sigma}(t_2))
\leq \mathrm{length}_{\ti{g}}(\ti{\sigma}|_{[t_{1},t_{2}]}) \leq A|t_{1}-t_{2}|,
\]
which is (ii). For (iii), by applying deck transformations, it suffices to show that
$$
\lim_{t\to +\infty}\dist_{\ti{g}}(\ti{\sigma}(0),\ti{\sigma}(t))=+\infty.
$$
We argue by contradiction. Suppose that $\{t_{i}\}$ is a sequence such that
\[
\lim_{i\rightarrow\infty}t_{i} = +\infty ,\quad \dist_{\ti{g}}(\ti{\sigma}(0),\ti{\sigma}(t_i)) \leq C
\]
for some constant $C$ independent of $i$. Let $\lfloor\cdot\rfloor$ be the greatest integer function. Then by (ii), we see that
\[
\begin{split}
\dist_{\ti{g}}(\ti{\sigma}(0),\ti{\sigma}(\lfloor t_i\rfloor))
\leq {} & \dist_{\ti{g}}(\ti{\sigma}(0),\ti{\sigma}(t_i))+\dist_{\ti{g}}(\ti{\sigma}(t_{i}),\ti{\sigma}(\lfloor t_i\rfloor)) \\
\leq {} & C+A|t_{i}-\lfloor t_{i}\rfloor| \leq C+A.
\end{split}
\]
This implies that $\{\ti{\sigma}(\lfloor t_i\rfloor)\}$ has a convergent subsequence. However, since $\ti \sigma$ is the lift of $\sigma$ and $\ti{\sigma}(\lfloor t_i\rfloor)\in\pi^{-1}(\sigma(0))$, $\{\ti{\sigma}(\lfloor t_i\rfloor)\}$ does not admit any convergent subsequence, and we get a contradiction.
\end{proof}

\subsection{Auxiliary short function}
In this subsection we construct some auxiliary functions for later use. Since their Lipschitz {constants are} strictly less than one, we refer to them as short functions.
\begin{lemma}\label{Lem: rho lambda}
Let $\Lambda$ be a smooth closed {separating} hypersurface $\Lambda \subset X_\ve$. Then there is a proper smooth function
$$\rho_{\Lambda}:X_{\Lambda} \to [0,+\infty)$$
such that
\begin{itemize}\setlength{\itemsep}{1mm}
\item $\rho_{\Lambda}\equiv 0$ in an open neighborhood of $\Lambda$;
\item $|\mathrm d\rho_{\Lambda}|_g < 1$.
\end{itemize}
\end{lemma}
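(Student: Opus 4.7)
The plan is to start from the Riemannian distance function $d(x):=\dist_g(x,\Lambda)$, rescale it to halve its Lipschitz constant, smooth the rescaled function, and finally compose with a cutoff that kills it in a neighborhood of $\Lambda$ while preserving properness.

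\textbf{Step 1: Properness of the distance to $\Lambda$.} The function $d$ is continuous and $1$-Lipschitz on $Y$ with respect to $g$. Because $\Lambda$ is a compact hypersurface and $(Y,g)$ is complete, the Hopf--Rinow theorem implies that each closed $R$-neighborhood $\overline{B_R(\Lambda)}\subset Y$ is compact. Hence $d^{-1}([0,R])\cap X_\Lambda$ is a closed subset of a compact set, and $d$ restricted to $X_\Lambda$ is a proper map to $[0,+\infty)$.

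\textbf{Step 2: Smoothing $d/2$ with gradient $<1$.} Consider $f:=d/2$, which is $\tfrac12$-Lipschitz. I would invoke the standard Greene--Wu smoothing on the Riemannian manifold $(Y,g)$: for any prescribed error function $\delta>0$, there exists a smooth function $u\in C^\infty(Y)$ with $|u-f|<\delta$ pointwise and $|du|_g<\tfrac12+\delta$ pointwise. Concretely, this is built from a locally finite partition of unity $\{\psi_i\}$ subordinate to a cover of $Y$ by small geodesic balls, convolving $f$ in normal coordinates on each ball to obtain smooth approximants $u_i$ with controlled Lipschitz constant, and then gluing via $u=\sum_i\psi_i u_i$ with the $\delta$ chosen small enough (relative to $|d\psi_i|_g$) that the cross terms $\sum_i(u_i-f)\,d\psi_i$ are absorbed. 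Taking $\delta<\tfrac14$ gives $|u-f|<\tfrac14$ and $|du|_g<1$ everywhere on $Y$.

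\textbf{Step 3: Cutoff near $\Lambda$ and verification.} Choose a smooth non-decreasing function $\chi:[0,+\infty)\to[0,+\infty)$ with $\chi\equiv 0$ on $[0,1]$, $0\leq\chi'\leq 1$, and $\chi(t)\to+\infty$ as $t\to+\infty$ (for instance $\chi(t)=t-2$ for $t\geq 2$). Define
\[
\rho_\Lambda := (\chi\circ u)\big|_{X_\Lambda}.
\]
On the open neighborhood $\{d<1\}\cap X_\Lambda$ of $\Lambda$ we have $u<\tfrac12+\tfrac14<1$, so $\rho_\Lambda\equiv 0$ there. The chain rule gives $|d\rho_\Lambda|_g=\chi'(u)\,|du|_g<1$. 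Finally, since $u\geq f-\tfrac14=d/2-\tfrac14$ and $d|_{X_\Lambda}$ is proper by Step 1, $u|_{X_\Lambda}$ is proper, and the composition with the unbounded $\chi$ keeps properness. All three requested properties hold.

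\textbf{Expected main obstacle.} The only non-routine ingredient is Step 2, namely producing a $C^\infty$ approximation of a Lipschitz function on a (possibly non-complete-sense) Riemannian manifold with arbitrarily small loss in Lipschitz constant. This is a well-known construction going back to Greene--Wu, and once it is cited the rest of the argument is mechanical; the cutoff $\chi$ and the factor $1/2$ in Step 2 are introduced precisely to leave room for the $O(\delta)$ loss in the smoothing step.
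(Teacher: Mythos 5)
Your argument is correct and takes essentially the same route as the paper: both proofs mollify the distance function to $\Lambda$ and then arrange the vanishing near $\Lambda$ together with the strict gradient bound, the paper by truncating first and dividing by $\Lip\rho_2+1$ at the end, you by pre-scaling by $1/2$ and invoking the quantitative Greene--Wu (equivalently Azagra et al./Matveev) smoothing. The only trivially fixable nit is that your $u$ may take values slightly below $0$ (since $|u-d/2|<1/4$), so $\chi$ should be extended by zero to the negative axis before composing.
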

\begin{proof}
Fix a small positive constant $\tau$ such that the distance function
$$\mathfrak d(\cdot):=\dist_g(\cdot,\Lambda)$$ is smooth in the $\tau$-neighborhood of $\Lambda$ in $Y$. We start with the function
$$
\rho_1=\max\{0,\mathfrak d-\tau\}.
$$
Through a standard mollification as in {\cite{GW79}}, we are able to construct a proper smooth function $\rho_2: X_{\Lambda} \to [0,+\infty)$ having bounded Lipschitz constant and satisfying $\rho_2\equiv 0$ in the $(\tau/2)$-neighborhood of $\Lambda$. After taking $$\rho_{\Lambda}=\frac{\rho_2}{\Lip\rho_2+1}$$ we can guarantee $|\mathrm d\rho_{\Lambda}|_g< 1$
and this completes the proof.
\end{proof}

From \eqref{Eq: 1} we see that
$$
\dist_{\tilde g} (\ti X_{\Lambda_1,i},\ti X_{\Lambda_1,j})\geq 12 d_{\Lambda_1}\mbox{ when } i\neq j.
$$
We construct suitable ``lifts" of $\rho_{\Lambda_1}$ for later use.

\begin{lemma}\label{Lem: rho_i}
For each $i$ we can construct a smooth function
$$
\tilde \rho_i:\tilde Y\to \mathbb R
$$
such that
\begin{itemize}\setlength{\itemsep}{1mm}
\item the set $\tilde \rho_i^{-1}([-d_{\Lambda_1},d_{\Lambda_1}])$ is contained in $\ti Y_{\Lambda_1}$;
\item Let $\rho_{\Lambda_1}$ be the function from Lemma \ref{Lem: rho lambda}. Then there is a constant $d_{\Lambda_1}'>d_{\Lambda_1}$ such that
$$\tilde \rho_i=-\rho_{\Lambda_1}\circ \pi-d_{\Lambda_1}'\mbox{ in } \ti X_{\Lambda_1,i}$$
and
$$\tilde \rho_i=\rho_{\Lambda_1}\circ \pi+d_{\Lambda_1}'\mbox{ in }  \ti X_{\Lambda_1,j} \mbox{ when }j\neq i;$$
\item $|\mathrm d\tilde\rho_i|_{\tilde g}< 1$.
\end{itemize}
\end{lemma}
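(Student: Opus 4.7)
The construction reduces to smoothly interpolating between the prescribed values $-d_{\Lambda_1}'$ on $\tilde\Lambda_{1,i}$ and $+d_{\Lambda_1}'$ on $\bigcup_{j\neq i}\tilde\Lambda_{1,j}$ while keeping the gradient strictly less than $1$. The key numerical input is the pairwise separation $\dist_{\tilde g}(\tilde X_{\Lambda_1,i}, \tilde X_{\Lambda_1,j}) \geq 6 d_{\Lambda_1}$ recorded just before the statement, which in particular forces $\dist_{\tilde g}(\tilde\Lambda_{1,i}, \tilde\Lambda_{1,j}) \geq 6 d_{\Lambda_1}$ for $i \neq j$. Accordingly I will fix $d_{\Lambda_1}' \in (d_{\Lambda_1}, 3 d_{\Lambda_1})$, say $d_{\Lambda_1}' := 2 d_{\Lambda_1}$, so that the required jump $2 d_{\Lambda_1}'$ over distance $\geq 6 d_{\Lambda_1}$ corresponds to an average slope $\leq 2/3$, leaving strict slack below $1$.

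On the union of ends $\tilde X_{\Lambda_1}$, I would define $\tilde\rho_i$ directly by the formulas in the statement's second bullet. The bound $|\mathrm d\rho_{\Lambda_1}|_g < 1$ from Lemma \ref{Lem: rho lambda} immediately yields $|\mathrm d\tilde\rho_i|_{\tilde g} < 1$ on the ends, and the fact that $\rho_{\Lambda_1}$ vanishes in a $\tau$-neighborhood of $\Lambda_1$ for some $\tau > 0$ means that $\tilde\rho_i$ is identically the constant $\mp d_{\Lambda_1}'$ in a $\tau$-collar of each $\tilde\Lambda_{1,j}$ on the end side, a feature that will make the eventual gluing across $\tilde\Lambda$ smooth automatically.

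On $\tilde Y_{\Lambda_1}$, I would set $\tilde\rho_i := h(\tilde v)$, where $\tilde v$ is a smooth approximation of the $1$-Lipschitz distance function $v(x) := \dist_{\tilde g}(x, \tilde\Lambda_{1,i})$ with $\|\tilde v - v\|_{C^0} < \tau/4$ and $|\mathrm d\tilde v|_{\tilde g} < 1 + \delta$, and $h : \mathbb R \to [-d_{\Lambda_1}', d_{\Lambda_1}']$ is a smooth non-decreasing plateau profile with $h \equiv -d_{\Lambda_1}'$ on $(-\infty, \tau/2]$, $h \equiv d_{\Lambda_1}'$ on $[6 d_{\Lambda_1} - \tau/2, \infty)$, and $\sup|h'|(1+\delta) < 1$. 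The last condition is feasible for small $\tau, \delta$ because the minimal possible $\sup|h'|$ is close to $2 d_{\Lambda_1}'/(6 d_{\Lambda_1}) = 2/3$. Since $v \equiv 0$ on $\tilde\Lambda_{1,i}$ and $v \geq 6 d_{\Lambda_1}$ on the other boundary components, in the $\tau/4$-collar of $\tilde\Lambda_{1,i}$ (resp.\ any other $\tilde\Lambda_{1,j}$) inside $\tilde Y_{\Lambda_1}$ one has $\tilde v \leq \tau/2$ (resp.\ $\tilde v \geq 6 d_{\Lambda_1} - \tau/2$), so $\tilde\rho_i \equiv -d_{\Lambda_1}'$ (resp.\ $+d_{\Lambda_1}'$) there, matching the end-side value smoothly across $\tilde\Lambda$. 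Moreover $|\mathrm d\tilde\rho_i|_{\tilde g} \leq |h'(\tilde v)| \cdot |\mathrm d\tilde v|_{\tilde g} < 1$ on $\tilde Y_{\Lambda_1}$. Property (1) is then immediate: $|\tilde\rho_i| \geq d_{\Lambda_1}' > d_{\Lambda_1}$ on $\tilde X_{\Lambda_1}$, so $\tilde\rho_i^{-1}([-d_{\Lambda_1}, d_{\Lambda_1}]) \subset \tilde Y_{\Lambda_1}$.

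The main (and only mild) technical point is producing the smooth approximation $\tilde v$ of the distance function on the non-compact manifold $\tilde Y_{\Lambda_1}$ with near-unit Lipschitz control; this is a standard partition-of-unity-based mollification, entirely analogous to the smoothing argument invoked inside the proof of Lemma \ref{Lem: rho lambda}.
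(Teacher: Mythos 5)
Your construction is correct and follows essentially the same route as the paper: mollify a $1$-Lipschitz distance-type function to $\tilde\Lambda_{1,i}$, compose with a monotone profile that saturates at $\pm d_{\Lambda_1}'$ well before reaching the ends (using the $6d_{\Lambda_1}$ separation to keep the slope below $1$), and glue to the prescribed end formulas via the fact that $\rho_{\Lambda_1}$ vanishes near $\Lambda_1$. The only differences are cosmetic — the paper uses the signed distance and defines the profile as $\int_0^{\,\cdot}\eta$ (which is how it produces $d_{\Lambda_1}'$), whereas you use the unsigned distance with an explicit plateau and fix $d_{\Lambda_1}'=2d_{\Lambda_1}$.
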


\begin{proof}
Let $\mathfrak d$ be the signed distance function to $\ti\Lambda_{1,i}$ taking negative values in $\ti X_{\Lambda_1,i}$. Put
$
\hat \rho_1=\mathfrak d-3d_{\Lambda_1}
$.
Through a standard mollification as in {\cite{GW79}} for any $\tau>0$ we can find a smooth proper function $\hat\rho$ satisfying
$|\hat \rho-\hat \rho_1|\leq \tau \mbox{ and }|\mathrm d\hat \rho|_{\tilde g}\leq 1+\tau. $
Define
$$
\hat\rho_2=\frac{\hat \rho}{1+2\tau}.
$$
By taking $\tau$ small enough we can guarantee $$\hat \rho_2\leq-2d_{\Lambda_1}\mbox{ in } \ti X_{\Lambda_1,i}$$ and $$\hat \rho_2\geq 2d_{\Lambda_1}\mbox{ in }  \ti X_{\Lambda_1,j}\mbox{ when } j\neq i.$$
Take a smooth and even cut-off function $\eta:(-\infty,+\infty)\to [0,1]$ such that $\eta\equiv 1$ in $[-d_{\Lambda_1},d_{\Lambda_1}]$ and $\eta\equiv 0$ in $(-\infty,-\frac{3}{2}d_{\Lambda_1}]\cup[\frac{3}{2}d_{\Lambda_1},+\infty)$. Denote
$$
\hat \rho_3=\int_0^{\hat \rho_2}\eta(t)\,\mathrm dt
$$
and
$$d_{\Lambda_1}'=\int_0^{2d_{\Lambda_1}}\eta(t)\,\mathrm dt.$$
Define
\[
\tilde \rho_i =
\begin{cases}
\quad \quad \quad \hat\rho_3 & \mbox{ in } \ti Y_{\Lambda_1};\\[1mm]
\ -\rho_{\Lambda_1}\circ\pi-d_{\Lambda_1}' & \mbox{ in } \ti X_{\Lambda_1,i};\\[1mm]
\ \ \ \rho_{\Lambda_1}\circ \pi+d_{\Lambda_1}' & \mbox{ in } \cup_{j\neq i} \ti X_{\Lambda_1,j}.
\end{cases}
\]
Then it is easy to verify that $\tilde \rho_i$ satisfies all the desired properties.
\end{proof}

\subsection{Weight functions}

\begin{lemma}\label{Lem: weight function 1}
Given any positive constants $\nu_0$ and $d_0$, there is a positive constant $\delta_0=\delta_0(\nu_0,d_0)$ such that for any positive constant $\delta\in (0,\delta_0)$ we can find $T_\delta>d_0$ and a smooth function
$$h_\delta:(-T_\delta,T_\delta)\to \mathbb R$$ such that
\begin{itemize}\setlength{\itemsep}{1mm}
\item $\lim_{t\to\pm T_\delta}h_\delta(t)=\mp\infty$ and $h_\delta'<0$;
\item $h_\delta^2-2|h_\delta'|+\nu_0\chi_{[-d_0,d_0]}\geq \delta$.
\end{itemize}
\end{lemma}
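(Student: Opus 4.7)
Since we demand $h_\delta'<0$ throughout, we have $|\mathrm d h_\delta|=-h_\delta'$, and the required inequality becomes
\[
h_\delta'\ \geq\ \tfrac12\bigl(\delta-h_\delta^{\,2}-\nu_0\chi_{[-d_0,d_0]}\bigr).
\]
The plan is to build $h_\delta$ as an odd decreasing function on $(-T_\delta,T_\delta)$ by solving Riccati-type ODEs piecewise: a ``slow'' middle on $[-d_0,d_0]$ where the boost $+\nu_0$ provides slack, and a ``fast'' blow-up outer region on $[d_0,T_\delta)$ (and symmetrically on the other side) that drives $h_\delta$ to $-\infty$ in finite time. The outer piece will saturate the inequality ($h^2-2|h'|=\delta$), and the middle piece will be engineered with uniformly bounded slope so that the inside inequality holds with ample slack.

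\textbf{Construction.} For the outer region $t\in[d_0,T_\delta)$, I will solve the saturated ODE $h_\delta'=-\tfrac12(h_\delta^{\,2}-\delta)$, whose explicit solution is
\[
h_\delta(t)\ =\ -\sqrt{\delta}\,\coth\!\Bigl(\tfrac{\sqrt\delta}{2}(T_\delta-t)\Bigr),
\]
valid as long as the initial value is below $-\sqrt\delta$; the identity $\coth^2-\operatorname{csch}^2=1$ then gives $h_\delta^{\,2}-2|h_\delta'|\equiv\delta$, and $h_\delta(t)\to-\infty$ as $t\uparrow T_\delta$. I will fix the matching value at the boundary by requiring $h_\delta(d_0)=-C\sqrt\delta$ for a fixed constant $C>1$ (say $C=2$); this forces $T_\delta-d_0=\tfrac{2}{\sqrt\delta}\operatorname{arctanh}(1/C)>0$, so in particular $T_\delta>d_0$. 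For the inner region $t\in[-d_0,d_0]$, I will take $h_\delta$ essentially linear with slope $-\mu$ where $\mu=C\sqrt\delta/d_0$, so that $h_\delta(0)=0$ and $h_\delta(\pm d_0)=\mp C\sqrt\delta$ match the outer piece. Then on $[-d_0,d_0]$,
\[
h_\delta^{\,2}-2|h_\delta'|+\nu_0\ \geq\ 0-2\mu+\nu_0\ =\ \nu_0-\tfrac{2C\sqrt\delta}{d_0},
\]
which is $\geq\delta$ as soon as $\delta_0=\delta_0(\nu_0,d_0)$ is chosen small (concretely, any $\delta_0<\min\{\nu_0/2,\,\nu_0^2 d_0^2/(16C^2)\}$ works). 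Finally, extend by oddness: $h_\delta(-t)=-h_\delta(t)$.

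\textbf{Smoothing at $\pm d_0$ and main obstacle.} The assembled function is only $C^0$ at $\pm d_0$: the inner slope is $-\mu=-C\sqrt\delta/d_0$ while the outer slope equals $-\tfrac12(C^2\delta-\delta)=-\tfrac12\delta(C^2-1)$, and these do not agree. The main technical issue is that the outer inequality is \emph{saturated}, so any perturbation of $|h_\delta'|$ there can destroy the bound $h_\delta^{\,2}-2|h_\delta'|\geq\delta$. My solution is to perform the smoothing \emph{entirely inside} $[-d_0,d_0]$, where the $+\nu_0$ boost gives genuine room: inside a thin layer $[d_0-\eta,d_0]$ replace the linear piece by a smooth monotone interpolation between slopes $-\mu$ and $-\tfrac12\delta(C^2-1)$ (both $C^\infty$-matched to the adjacent pieces up to all orders), while leaving the outer $\coth$-formula untouched. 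The maximum slope magnitude in the smoothing layer is still $\mu=C\sqrt\delta/d_0$, and since $h_\delta^{\,2}+\nu_0-\delta\geq\nu_0-\delta\gg 2\mu$ for $\delta<\delta_0$, the inner inequality is preserved throughout. This yields a smooth function $h_\delta$ on $(-T_\delta,T_\delta)$ with $h_\delta'<0$, the correct asymptotic blow-up, and $h_\delta^{\,2}-2|\mathrm d h_\delta|+\nu_0\chi_{[-d_0,d_0]}\geq\delta$ everywhere, completing the proof.
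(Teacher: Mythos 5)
Your construction is essentially the paper's: the outer pieces are exactly the saturated Riccati solutions $\mp\sqrt{\delta}\coth\bigl(\tfrac{\sqrt{\delta}}{2}(T_\delta\mp t)\bigr)$ satisfying $h^2+2h'=\delta$, and the inequality on $[-d_0,d_0]$ is obtained from the $\nu_0$-boost exactly as in the paper, with the same smallness condition on $\delta_0$. The only real difference is how the two regimes are glued, and here your write-up has a small inconsistency: you cannot simultaneously have the derivative on the layer $[d_0-\eta,d_0]$ interpolate monotonically between $-\mu$ and $-\tfrac12\delta(C^2-1)$ \emph{and} keep $h_\delta(d_0)=-C\sqrt{\delta}$, since the value constraint forces the average slope on the layer to equal $-\mu$ while a monotone interpolation toward the (less negative) outer slope has average strictly greater than $-\mu$. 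As written the glued function is therefore discontinuous at $\pm d_0$. This is easily repaired — either let the slope dip slightly below $-\mu$ elsewhere in the layer to compensate (harmless after shrinking $\delta_0$ by a fixed factor), or drop the normalization $h_\delta(d_0)=-C\sqrt{\delta}$ and instead choose $T_\delta$ a posteriori so that the autonomous outer ODE matches whatever value $-c\in(-C\sqrt{\delta},-\sqrt{\delta})$ the smoothed inner piece produces — but it should be said. The paper avoids this bookkeeping entirely by taking the convex combination $\eta f_k^++(1-\eta)f_k^-$ of the two globally defined Riccati solutions and letting $k\to\infty$, so smoothness and monotonicity are automatic and the interior function converges to $(2\eta-1)\sqrt{\delta}$, whose $C^1$-norm is controlled by the same smallness of $\delta_0$ you use.
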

\begin{proof}
Take a smooth cut-off function $\eta:(-\infty,+\infty)\to [0,1]$ such that $\eta\equiv 1$ in $(-\infty,-d_0]$, $\eta\equiv 0$ in $[d_0,+\infty)$ and $\eta'\leq 0$. Take $\delta_0$ to be a positive constant no greater than $\nu_0/2$ such that
\begin{equation}\label{Eq: delta0}
\delta_0\|2\eta-1\|_{C^0}^2+2\sqrt{\delta_0}\|2\eta-1\|_{C^1}<\nu_0/2.
\end{equation}
For any $\delta\in (0,\delta_0)$ and integer $k>d_0$ we denote
$$
f_k^+:(-k,+\infty)\to (\sqrt\delta,+\infty),\,t\mapsto\sqrt \delta\coth\left(\frac{\sqrt\delta }{2}(k+t)\right),
$$
and
\begin{equation}\label{Eq: fk-}
f_k^-:(-\infty,k)\to (-\infty,-\sqrt\delta),\,t\mapsto-\sqrt \delta\coth\left(\frac{\sqrt\delta }{2}(k-t)\right).
\end{equation}
It is easy to verify $(f_k^{\pm})^2+2(f_k^\pm)'=\delta$, $(f_k^{\pm})'<0$, $f_k^+>0$, $f_k^-<0$.
Define
$$
h_\delta=\eta f_k^++(1-\eta)f_k^-:(-k,k)\to\mathbb R
$$
with $k$ to be determined later.
Then it is easy to verify
$$
\lim_{t\to\pm k} h_\delta=\lim_{t\to \pm k}f_{k}^{\mp}=\mp\infty
$$
and
$$
h_\delta'=\eta (f^+_k)'+(1-\eta)(f_k^-)'+\eta'(f^+_k-f_k^-)<0.
$$
Notice that $h_\delta$ converges smoothly to the function $(2\eta-1)\sqrt\delta$ in the closed interval $[-d_0,d_0]$ as $k\to+\infty$. From estimate \eqref{Eq: delta0} we can take $k$ large enough such that
$$h_\delta^2-2|h_\delta'|+\nu_0>\nu_0/2\geq \delta\mbox{ in }[-d_0,d_0].$$
Denote such $k$ by $T_{\delta}$. Outside $[-d_0,d_0]$ we have $h_\delta^2-2|h_\delta'|=\delta$ and so $h_\delta$ satisfies all the desired properties.
\end{proof}

\begin{lemma}\label{Lem: weight function 2}
Given any positive constants $\nu_0$ and $d_0$, there is a positive constant $\delta_0=\delta_0(\nu_0,d_0)$ such that for any positive constant $\delta\in (0,\delta_0)$, we can find $T_\delta>d_0$ and a smooth function
$$h_\delta:[0,T_\delta)\to (-\infty,0]$$ such that
\begin{itemize}\setlength{\itemsep}{1mm}
\item $h_\delta\equiv 0$ around $t=0$, $\lim_{t\to T_\delta}h_\delta(t)=-\infty$ and $h_\delta'\leq 0$;
\item $h_\delta^2-2|h_\delta'|+\nu_0\chi_{[0,d_0]}\geq \delta$.
\end{itemize}
\end{lemma}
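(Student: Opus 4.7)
The plan is to adapt the construction of Lemma \ref{Lem: weight function 1} to the half-line, keeping only the decreasing branch $f_k^-$ from \eqref{Eq: fk-} and replacing the increasing branch by the constant $0$. Concretely, I would fix a smooth cut-off $\eta : [0, +\infty) \to [0,1]$ with $\eta \equiv 1$ on a neighbourhood of $0$, $\eta \equiv 0$ on $[d_0, +\infty)$, and $\eta' \leq 0$, and set $C := \|1 - \eta\|_{C^1}$, a constant depending only on $d_0$. Then I would choose $\delta_0 = \delta_0(\nu_0, d_0) \in (0, \nu_0/2)$ small enough that $\delta_0 + 2\sqrt{\delta_0}\, C < \nu_0$. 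For each $\delta \in (0, \delta_0)$ and each large integer $k > d_0$ (to be chosen), recall that $f_k^-$ takes values in $(-\infty, -\sqrt{\delta})$ on $(-\infty, k)$, satisfies $(f_k^-)^2 + 2(f_k^-)' = \delta$, and has $(f_k^-)' < 0$. Define
$$h_\delta(t) := (1 - \eta(t))\, f_k^-(t), \qquad t \in [0, k),$$
and set $T_\delta := k$.

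Next I would run through the claimed properties. Since $\eta \equiv 1$ near $t=0$, the function $h_\delta$ is identically zero there; since $\eta(t) = 0$ for $t \geq d_0$, $h_\delta(t) = f_k^-(t) \to -\infty$ as $t \to k^-$. Both terms of
$$h_\delta'(t) = -\eta'(t) f_k^-(t) + (1-\eta(t))(f_k^-)'(t)$$
are nonpositive (using $\eta' \leq 0$, $f_k^- < 0$, $1-\eta \geq 0$, $(f_k^-)' < 0$), so $h_\delta' \leq 0$; likewise $h_\delta \leq 0$. The only substantive task is the inequality $h_\delta^2 - 2|\mathrm d h_\delta| + \nu_0 \chi_{[0,d_0]} \geq \delta$. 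On $[d_0, T_\delta)$ we have $\eta = 0$, so $h_\delta = f_k^-$ and $h_\delta^2 - 2|h_\delta'| = (f_k^-)^2 + 2(f_k^-)' = \delta$ directly.

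The key estimate is therefore on the interval $[0, d_0]$, and here I would use the same $k \to \infty$ limiting argument as in Lemma \ref{Lem: weight function 1}: on the compact interval $[0,d_0]$, $f_k^-$ converges smoothly to the constant $-\sqrt{\delta}$, so $h_\delta$ converges in $C^1$ to $-(1-\eta)\sqrt{\delta}$, whose derivative is $\sqrt{\delta}\,\eta' \leq 0$ with absolute value $\sqrt{\delta}|\eta'|$. Consequently, for $k$ large enough,
$$h_\delta^2 - 2|h_\delta'| + \nu_0 \;\geq\; \delta(1-\eta)^2 - 2\sqrt{\delta}\,|\eta'| + \nu_0 - o_k(1) \;\geq\; \nu_0 - \delta - 2\sqrt{\delta}\, C - o_k(1),$$
which by the choice of $\delta_0$ is $\geq \delta$ once $k$ is large, completing the argument with $T_\delta = k$.

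The hard part is really just the calibration of constants, exactly as in the proof of Lemma \ref{Lem: weight function 1}: one first pins down $\delta_0 = \delta_0(\nu_0,d_0)$ using the cut-off bounds, and then enlarges $k = T_\delta$ so that the $C^1$-approximation error on $[0, d_0]$ is absorbed into the positive slack $\nu_0 - \delta - 2\sqrt{\delta}\,C$. No new ideas beyond the previous lemma are required; the only genuine modification is replacing the positive branch $\eta f_k^+$ by $\eta \cdot 0$ so that $h_\delta$ vanishes near the origin rather than blowing up.
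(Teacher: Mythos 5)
Your proposal is correct and is essentially the paper's own proof: the paper likewise sets $h_\delta=(1-\eta)f_k^-$ with the same cut-off, calibrates $\delta_0$ from the $C^1$-norm of $1-\eta$, and takes $k$ large so that the smooth convergence of $f_k^-$ to the constant $-\sqrt{\delta}$ on $[0,d_0]$ absorbs the error. The only cosmetic difference is the exact form of the smallness condition on $\delta_0$ (the paper keeps a $\nu_0/2$ slack), and your chain $\delta(1-\eta)^2\geq -\delta$ is needlessly lossy where $\delta(1-\eta)^2\geq 0$ suffices, but this does not affect correctness.
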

\begin{proof}
The proof is similar to that of Lemma \ref{Lem: weight function 1}. Take a smooth cut-off function $\eta:[0,+\infty)\to [0,1]$ such that $\eta\equiv 1$ around $t=0$, $\eta\equiv 0$ in $[d_0,+\infty)$ and $\eta'\leq 0$. Take $\delta_0$ to be a positive constant no greater than $\nu_0/2$ such that
\begin{equation*}
\delta_0\|\eta-1\|_{C^0}^2+2\sqrt{\delta_0}\|\eta-1\|_{C^1}<\nu_0/2.
\end{equation*}
For any $\delta\in (0,\delta_0)$ and integer $k>d_0$ we take
$
h_{\delta}=(1-\eta)f_k^-
$,
where $f_k^-$ is the function from \eqref{Eq: fk-}. As $k\to +\infty$ the function $h_\delta$ converges smoothly to $-\sqrt\delta (1-\eta)$ {on $[0,d_{0}]$} and so by taking $k$ large enough we can guarantee
$$
h_\delta^2-2|h_\delta'|+\nu_0\geq \nu_0/2\geq\delta\mbox{ in }[0,d_0].
$$
Denote such $k$ by $T_{\delta}$. Outside $[0,d_0]$ we have $h_\delta^2-2|h_\delta'|=\delta$ from the definition of $f_k^-$.
The remaining properties can be verified easily and we complete the proof.
\end{proof}

\subsection{Topology}
Here we collect some topological facts about  $\tilde{Y}$. To deal with the arbitrary ends $\tilde{X}_\ve$ over which we have no control, we use relative homology theory so that the ends $\tilde{X}_\ve$ have no impact on our topological analysis. All the homology groups in this paper are assumed to have $\mathbb Z$ coefficients, unless otherwise noted.
\begin{lemma}\label{Lem: excision}
We have
$$
H_k(\ti Y, \ti X_{\ve})=0
$$
for all integers $k\geq 2$. Moreover, if $\gamma$ is a closed curve in $\tilde Y$, then $[\gamma]=0\in H_1(\tilde Y,\tilde X_\ve)$.
\end{lemma}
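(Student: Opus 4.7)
The plan is to reduce the computation of $H_k(\tilde Y, \tilde X_\ve)$ to the contractibility of $\tilde N$ via two applications of excision, and to handle the closed-curve statement via van Kampen.

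For the first claim, I would first excise a slight inward push of the interior of $\tilde X_\ve$; the resulting pair deformation retracts onto $(\tilde N_\ve, \partial \tilde N_\ve)$, giving
$$
H_k(\tilde Y, \tilde X_\ve) \cong H_k(\tilde N_\ve, \partial \tilde N_\ve).
$$
Then, excising $\bigsqcup_i B^{\tilde h}_{\ve/2}(p_i)$ from the pair $(\tilde N, \bigsqcup_i \bar B^{\tilde h}_\ve(p_i))$ and deformation retracting the remaining annular pieces onto $\partial \tilde N_\ve$ yields
$$
H_k(\tilde N_\ve, \partial \tilde N_\ve) \cong H_k(\tilde N, \bigsqcup_i \bar B^{\tilde h}_\ve(p_i)),
$$
and the deformation retraction of each closed ball to its center further identifies the right-hand side with $H_k(\tilde N, \pi^{-1}(p))$. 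Since $N$ is aspherical, $\tilde N$ is contractible, so $H_k(\tilde N) = 0$ for $k \geq 1$; and $\pi^{-1}(p)$ is discrete, so $H_k(\pi^{-1}(p)) = 0$ for $k \geq 1$. The long exact sequence of this pair then gives $H_k(\tilde N, \pi^{-1}(p)) = 0$ for all $k \geq 2$, establishing the first statement.

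For the second statement, apply the long exact sequence of the pair $(\tilde Y, \tilde X_\ve)$,
$$
H_1(\tilde X_\ve) \xrightarrow{i_*} H_1(\tilde Y) \xrightarrow{j_*} H_1(\tilde Y, \tilde X_\ve) \to H_0(\tilde X_\ve) \to H_0(\tilde Y).
$$
It suffices to prove that $i_*$ is surjective, since then $j_*([\gamma])=0$ for every closed curve $\gamma$ in $\tilde Y$. Because $n \geq 3$, a general-position argument---any loop or null-homotopy in $\tilde N$ is compact and meets only finitely many of the $p_i$, which can be perturbed off---shows $\pi_1(\tilde N_\ve) \cong \pi_1(\tilde N) = 0$. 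Writing $\tilde Y$ as $\tilde N_\ve$ with each $\tilde X_{\ve,i}$ glued along the simply connected boundary sphere $\partial \tilde X_{\ve,i} \cong S^{n-1}$, van Kampen yields
$$
\pi_1(\tilde Y) \cong \ast_i \pi_1(\tilde X_{\ve,i}),
$$
whose abelianization is $\bigoplus_i H_1(\tilde X_{\ve,i}) = H_1(\tilde X_\ve)$, and this identification is induced by $i_*$. Hence $i_*$ is surjective, as required.

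The main subtlety is verifying that the two excision isomorphisms compose correctly with the intermediate deformation retracts, and that the van Kampen and general-position arguments extend to the setting of countably many ends. Both are routine once one observes that every loop and every null-homotopy has compact image in $\tilde N$ and therefore interacts with only finitely many of the $p_i$'s.
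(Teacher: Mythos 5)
Your proof of the first claim is essentially the paper's: two excisions reduce $H_k(\tilde Y,\tilde X_\ve)$ to $H_k(\tilde N,\bigcup_i B^{\tilde h}_\ve(p_i))$ (equivalently $H_k(\tilde N,\pi^{-1}(p))$), and the long exact sequence together with the contractibility of $\tilde N$ finishes it. For the second claim you take a genuinely different route. The paper retracts $\tilde Y$ onto $\tilde N$ via a map $F$ that is the identity on $\tilde N_\ve$, notes that $F_*$ is an isomorphism on the relative $H_1$, deforms the image curve off the balls so that its class lies in the image of $H_1(\tilde N)\to H_1(\tilde N,\bigcup_i B^{\tilde h}_\ve(p_i))$, and concludes from $H_1(\tilde N)=0$; this argument is purely homological and never needs $\pi_1$ of anything. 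You instead compute $\pi_1(\tilde Y)\cong \ast_i\pi_1(\tilde X_{\ve,i})$ by van Kampen (using $n\ge 3$ so that $\pi_1(\tilde N_\ve)=0$ and the gluing spheres are simply connected), abelianize, and deduce that $i_*:H_1(\tilde X_\ve)\to H_1(\tilde Y)$ is surjective, which kills $j_*$ in the long exact sequence of $(\tilde Y,\tilde X_\ve)$. Both arguments are valid in the relevant dimensions; yours yields the slightly sharper structural fact that $H_1(\tilde Y,\tilde X_\ve)$ injects into $H_0(\tilde X_\ve)$, at the cost of the (correctly flagged) bookkeeping for van Kampen with infinitely many pieces, which is handled by the compactness of loops and null-homotopies exactly as you say. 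The paper's version is more economical in that it avoids fundamental groups entirely, but there is no gap in your argument.
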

\begin{proof}
From excision we have
$$
H_k(\tilde Y,\tilde X_{\ve})=H_k\left(\tilde N_\ve,\bigcup_i \partial B^{\tilde h}_{\ve}(p_i)\right)=H_k\left(\tilde N,\bigcup_i B_\ve^{\tilde h}(p_i)\right).
$$
From the exact sequence of pair we have
$$
H_{k}\left(\bigcup_i B_\ve^{\tilde h}(p_i)\right)\to H_k(\tilde N)\to H_k\left(\tilde N,\bigcup_i B_\ve^{\tilde h}(p_i)\right)
\to H_{k-1}\left(\bigcup_i B_\ve^{\tilde h}(p_i)\right).
$$
Notice that the first and fourth terms are zero when $k\geq 2$ and so it follows from the contractibility of $\tilde N$ that
$$
H_k\left(\tilde N,\bigcup_iB^{\tilde h}_{\ve}(p_i)\right)=H_k(\tilde N)=0.
$$
Since $\tilde Y=\tilde N\#_{\pi_1(N)}X$, we can fix a continuous map
$$
F:\tilde Y\to \tilde N\mbox{ with }F|_{\tilde N_\ve}=\mathrm {id}:\tilde N_\ve\to \tilde N_\ve.
$$
{From $F(\tilde X_\ve)\subset\bigcup_{i} B_\ve^{\tilde h}(p_i)$ and the excision property,} we see that $F_*:H_1(\tilde Y,\tilde X_\ve)\to H_1(\tilde N,\bigcup_{i} B_\ve^{\tilde h}(p_i))$ is an isomorphism.
To see $[\gamma]=0\in H_1(\tilde Y,\tilde X_\ve)$ for any closed curve $\gamma$, we just need to show $[F\circ \gamma]=0\in H_1(\tilde N,\bigcup_{i} B_\ve^{\tilde h}(p_i))$. It is well-known that the curve $F\circ \gamma$ can be deformed continuously to a new closed curve which avoids touching all balls $B_\ve^{\tilde h}(p_i)$. This yields $\partial[F\circ \gamma]=0$ under the map
$$
\partial:H_1\left(\tilde N,\bigcup_i B_\ve^{\tilde h}(p_i)\right)\to H_{0}\left(\bigcup_i B_\ve^{\tilde h}(p_i)\right).
$$
The exactness then implies that $[F\circ\gamma]$ must lie in the image of
$$
H_1(\tilde N)\to H_1\left(\tilde N,\bigcup_i B_\ve^{\tilde h}(p_i)\right).
$$
Therefore we obtain $[F\circ\gamma]=0$ from the fact that $\tilde N$ is contractible (and in particular $H_1(\tilde N)=0$), which completes the proof.
\end{proof}

Moreover, we have the following quantitative filling lemma, which is a relative version of \cite[Proposition 10]{CL20}.
\begin{lemma}\label{Lem: quantitative filling}
For any $k\in \mathbb N_+$ there is a function $F:(0,+\infty)\to (0,+\infty)$ depending only on the triple $(Y, N_\ve, g)$ such that if $C$ is a relative $k$-boundary in $\mathcal B_k(\ti Y, \ti X_{\ve})$ contained in some geodesic ball $B_{r}^{\ti{g}}(\tilde q)$ of $(\ti Y,\tilde g)$ with $\tilde q\in \tilde N_\ve$, then we can find a relative $(k+1)$-chain $\Gamma \in \mathcal Z_{k+1}(\ti Y,  \ti X_{\ve})$ contained in the geodesic ball $B_{F(r)}^{\ti{g}}(\tilde q)$ of $(\tilde Y,\tilde g)$ with $\partial\Gamma=C$ modulo $ \ti X_{\ve}$.
\end{lemma}
\begin{proof}
Fix a point $q_0\in\tilde N_\ve\subset \tilde Y$. For any $r>0$ {we can find a domain $U_r$ with smooth boundary such that $B_{r}^{\ti{g}}(q_0) \subset U_r \subset B_{r+1}^{\ti{g}}(q_0)$}. The kernel of the inclusion
$$
i_*:H_{k}({U_r}, \tilde X_{\ve})\to H_k(\tilde Y, \tilde X_{\ve})
$$
is finitely generated {because it is a subgroup of a finitely generated abelian group,} and so there exists some $R_0=R_0(r)<+\infty$ such that each relative $k$-boundary in $\mathcal B_k(\ti Y,  \ti X_{\ve})$ contained in ball $B_{r}^{\ti{g}}(q_0)$ can be filled in $B_{R_0}^{\ti{g}}(q_0)$.

For any point $q\in \tilde N_\ve\subset \tilde Y$ other than $q_0$, there is a deck transformation $\Psi$ so that $d(q_0,\Psi(q)) \leq D_0:=\diam (N_\ve,g)$. As such, we have $\Psi(B_{r}^{\ti{g}}(q)) \subset B_{r+D_0}^{\ti{g}}(q_0)$. Now let $C$ be a relative $k$-boundary in $\mathcal B_k(\tilde Y, \tilde X_{\ve})$ contained in $B_{r}^{\ti{g}}(q)$, then $\Psi_\#C$ is a relative $k$-boundary contained in $B_{r+D_0}^{\ti{g}}(q_0)$. From previous discussion we can find a relative $(k+1)$-chain $\Gamma \in\mathcal Z_{k+1}(\tilde Y,\tilde X_{\ve})$ contained in $B_{R_0(r+D_0)}^{\ti{g}}(q_0)$ such that $\partial \Gamma=\Psi_\# C$ modulo $\tilde X_{\ve}$. Then it follows that $(\Psi^{-1})_\#\Gamma$ is a relative $(k+1)$-chain with relative $k$-boundary $\Gamma$ which is contained in $B_{R_0(r+D_0)+D_0}^{\ti{g}}(q)$. The proof is completed by taking $F(r)=R_0(r+D_0)+D_0$.
\end{proof}

\subsection{$\mathbb T^l$-invariant variational problems}
The results here are similar to the ones in \cite[Sections 3-4]{CL20} about warped $\mu$-bubbles and free boundary warped $\mu$-bubbles, but we present them using a warped product formulation.
\begin{definition}\label{Defn: stabilized curvature}
Let $(M,g)$ be a Riemannian manifold possibly with non-empty boundary and $\Gamma$ be a smooth portion of $\partial M$. Let $R$ and $H$ be smooth functions on $M$ and $\Gamma$ respectively. We say that $(M,\Gamma)$ has $\mathbb T^l$-stabilized scalar-mean curvature $(R,H)$ if there are $l$ positive smooth functions $v_1,v_2,\ldots,v_l$ on $M$ such that $(M\times \mathbb T^l,g+\sum_i v_i^2\mathrm d\theta_i^2)$ has scalar curvature $\tilde R$ in $\hat M : = M\times \mathbb T^l$ and has mean curvature $\tilde H$ on $\Gamma\times \mathbb T^l$, where $\tilde R$ and $\tilde H$ are $\mathbb T^l$-invariant extension of $R$ and $H$ respectively (of course $\operatorname{dim} M+l\geq 2$ is required to compute the scalar curvature).
\end{definition}
\begin{lemma}\label{Lem: plateau}
Let $2\leq n\leq 7$. Let $(M^n,g)$ be a complete Riemannian manifold {without boundary} and $S$ be an embedded {closed} submanifold in $M$ with codimension two and $0 = [S]\in H_{n-2}(M)$. If $M$ has $\mathbb T^l$-stabilized scalar curvature $R$, then we can construct a properly embedded, complete hypersurface $\Sigma$ with $\partial\Sigma=S$ and $\mathring \Sigma$ has $\mathbb T^{l+1}$-stabilized scalar curvature $R'$ which is no less than $R|_\Sigma$.
\end{lemma}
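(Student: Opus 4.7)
\smallskip
\noindent\emph{Proof Proposal.} The plan is to solve a weighted Plateau problem for a hypersurface in $M$ with boundary $S$, and then upgrade the standard Schoen--Yau stable minimal surface argument to produce one extra warping factor.

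First, I would reformulate the data as a weighted variational problem on $M$ itself. The hypothesis that $M$ has $\mathbb{T}^l$-stablized scalar curvature $R$ means that on $\hat M := M \times \mathbb T^l$ with warped metric $\hat g = g + \sum_{i=1}^l v_i^2 \,\mathrm{d}\theta_i^2$ the scalar curvature is a $\mathbb{T}^l$-invariant extension $\tilde R$ of $R$. A $\mathbb T^l$-invariant hypersurface of the form $\Sigma \times \mathbb T^l \subset \hat M$ then has $\hat g$-area equal to $(2\pi)^l \int_\Sigma \prod_i v_i\,\mathrm d\mathcal H^{n-1}$, so I would minimize the weighted functional
\[
A_v(\Sigma) = \int_\Sigma \prod_{i=1}^l v_i\,\mathrm d\mathcal H^{n-1}
\]
among hypersurfaces $\Sigma \subset M$ with $\partial \Sigma = S$. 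The admissible class is nonempty because $[S] = 0 \in H_{n-2}(M)$ produces a chain $\Omega$ with $\partial\Omega = S$, which can be represented by an embedded hypersurface with boundary $S$.

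Second, I would produce a smooth minimizer by exhausting $M$ with an increasing sequence of relatively compact open sets and solving the weighted Plateau problem on each. Since $\dim M = n \le 7$, interior regularity for codimension-one weighted area minimizers (Federer's theorem adapted to smooth positive weights) yields smoothness in $\mathring\Sigma$, while boundary regularity along the smooth embedded $S$ is classical (Hardt--Simon). A diagonal subsequence argument then produces a properly embedded, complete smooth hypersurface $\Sigma \subset M$ with $\partial\Sigma = S$ that minimizes $A_v$ locally; equivalently, the $\mathbb T^l$-invariant hypersurface $\Sigma\times\mathbb T^l \subset \hat M$ is a smooth stable $\hat g$-minimizer in its relative homology class.

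Third, I would extract the additional warping factor $w$ from the stability inequality. On $\Sigma \times \mathbb T^l$ the Jacobi operator is $L = -\Delta + |A|^2 + \mathrm{Ric}_{\hat g}(\nu,\nu)$; by $\mathbb T^l$-invariance it descends to a weighted elliptic operator on $\Sigma$. Stability plus the Fischer-Colbrie--Schoen correspondence produces a positive $\mathbb T^l$-invariant function $w:\Sigma\to\mathbb R_{>0}$ with $Lw \le 0$ (on non-compact $\Sigma$ one builds $w$ by a compact exhaustion and a Harnack/normalization argument). A standard computation using the traced Gauss equation then shows that the warped product metric $g_\Sigma + \sum_i v_i^2\,\mathrm d\theta_i^2 + w^2\,\mathrm d\theta_{l+1}^2$ on $\Sigma\times\mathbb T^{l+1}$ has scalar curvature $R'$ satisfying $R' \ge R|_\Sigma$ pointwise, because $Lw \le 0$ exactly absorbs the $|A|^2 + \mathrm{Ric}_{\hat g}(\nu,\nu)$ terms appearing in the Gauss identity. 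This is the required $\mathbb T^{l+1}$-stablized scalar curvature bound for $\mathring\Sigma$.

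The main obstacle will be the non-compactness of $M$: the weights $v_i$ are only known to be positive, with no lower bound at infinity, so I must argue that the minimizers from compact exhaustion do not escape to infinity or lose mass, and that the limiting $\Sigma$ remains properly embedded and complete with boundary exactly $S$. The passage from stability to the function $w$ is routine on compact pieces but also requires the non-compact Fischer-Colbrie--Schoen formalism (working with the generalized principal eigenvalue) rather than an honest Dirichlet eigenfunction.
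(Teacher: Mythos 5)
Your proposal follows essentially the same route as the paper: the weighted functional $\int_\Sigma \prod_i v_i\,\mathrm d\mathcal H^{n-1}$ is exactly the $\tilde g$-area for the conformal metric $\tilde g=(\prod_i v_i)^{2/(n-1)}g$ the paper minimizes, the existence/regularity step for $2\le n\le 7$ is handled the same way (exhaustion plus a modification of Simon's Lemma 34.1), and the extra warping factor is extracted from stability via the $\mathbb T^l$-invariance of the Jacobi operator and the Fischer-Colbrie--Schoen construction in the non-compact case, just as in the paper. The only slip is the sign convention in your Jacobi operator (it should be $-\Delta-(|A|^2+\mathrm{Ric}(\nu,\nu))$, with the positive function $w$ satisfying $\mathcal Jw\ge 0$, i.e. $-\Delta w/w\ge |A|^2+\mathrm{Ric}(\nu,\nu)$), but the intended inequality and the Gauss-equation bookkeeping are clearly correct.
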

\begin{proof}
Denote $\hat M=M\times \mathbb T^l$. Since $M$ has $\mathbb T^l$-stabilized scalar curvature $R$, there are finitely many smooth positive functions $v_1,v_2,\ldots, v_l$ on $M$ such that the warped product metric
\begin{equation}\label{Eq: hat g}
\hat g=g+\sum_{i=1}^lv_i^2\mathrm d\theta_i^2
\end{equation}
satisfies $R(\hat g)=\tilde R$, where $\tilde R$ is the $\mathbb T^l$-invariant extension of $R$ on $\hat M$. We would like to minimize the area functional among hypersurfaces in the form of $\Sigma\times \mathbb T^l$ with $\partial\Sigma=S$. This is equivalent to solving the Plateau problem for $S$ in the Riemannian manifold $(M,\tilde g)$ with
\begin{equation}\label{Eq: conformal tilde g}
\tilde g=\left(\prod_i v_i\right)^{\frac{2}{n-1}}g.
\end{equation}
It follows from a modification of the proof of \cite[Lemma 34.1]{Sim83} and the regularity theory for mass-minimizing current that when $2\leq n\leq 7$ we can find a properly embedded (possibly non-compact) hypersurface $\Sigma$ minimizing the area functional with $\partial\Sigma=S$.

Next we show that $\mathring\Sigma$ has $\mathbb T^{l+1}$-stabilized scalar curvature $R'$ which is no less than $R|_{\Sigma}$. Notice that $\Sigma \times \mathbb T^l$ is actually a constrained area minimizer in $(\hat M,\hat g)$ among $\mathbb T^l$-invariant hypersurfaces. From the $\mathbb T^l$-invariance of $(\hat M,\hat g)$ we see that the mean curvature of $\Sigma \times \mathbb T^l$ is $\mathbb T^l$-invariant. This yields that $\Sigma \times \mathbb T^l$ has to be a minimal hypersurface as a constrained area-minimizer. We claim that $\Sigma \times \mathbb T^l$ is also stable. To see this we consider a $\mathbb T^l$-invariant exhaustion $U_i\times \mathbb T^l$ of $\Sigma\times \mathbb T^l$. The key is to notice that the Jacobi operator
\begin{equation}\label{Eq: Jacobi}
\mathcal J=-\Delta-(\mathrm{Ric}(\nu)+|A|^2)
= -\Delta - \frac{1}{2}(R(\hat g)|_{\Sigma\times \mathbb T^l} - R(\hat{g}|_{\Sigma\times \mathbb T^l})+|A|^2)
\end{equation}
(corresponding to the second variation of the area functional $\mathcal H^{n+l}_{\hat g}(\cdot)$) on $\Sigma\times\mathbb T^{l}$ is also $\mathbb T^{l}$-invariant. {Here the second identity follows from Schoen--Yau's rearrangement trick.} The uniqueness of the first eigenfunction (up to scaling) implies that the first eigenfunction with Dirichlet boundary condition of the Jacobi operator on $U_i\times \mathbb T^l$ is again $\mathbb T^l$-invariant. The property of constrained area minimizers then yields the stability of $\Sigma \times \mathbb T^l$. Notice that the properness of $\Sigma$ actually implies the completeness of $\Sigma\times \mathbb T^l$.

Now the argument can be divided into two cases.

\medskip

{\it Case 1. $\Sigma$ is compact.} We take $\tilde v_{l+1}$ to be the first eigenfunction with Dirichlet boundary condition of the Jacobi operator. Then $\tilde v_{l+1}$ is $\mathbb T^l$-invariant from previous discussion and it induces a smooth function $v_{l+1}$ on $\Sigma$ with $v_{l+1}>0$ in $\mathring \Sigma$, and we have $\mathcal J\tilde v_{l+1}\geq 0$.

\medskip

{\it Case 2. $\Sigma$ is non-compact.} Repeating Fischer-Colbrie--Schoen's construction
{as in \cite[Proof of Theorem 1]{FcS80}} we can construct a $\mathbb T^l$-invariant function $\tilde v_{l+1}$ on $\Sigma \times \mathbb T^l$ such that $\mathcal J\tilde v_{l+1}=0$ and $\tilde v_{l+1}>0$ in $\mathring \Sigma \times \mathbb T^l$. Again it induces a smooth function $v_{l+1}$ on $\Sigma$ with $v_{l+1}>0$ in $\mathring \Sigma$.

\medskip
On $\Sigma \times \mathbb T^{l+1}$, denote
\begin{equation}\label{Eq: hat g'}
\hat g'=g|_\Sigma+\sum_{i=1}^l(v_i|_{\Sigma})^2\mathrm d\theta_i^2+v_{l+1}^2\mathrm d\theta_{l+1}^2.
\end{equation}
In both cases, by viewing $(\Sigma \times \mathbb T^{l+1}, g')$ as the warped product of $(\Sigma \times \mathbb T^{l}, \hat{g}|_{\Sigma\times \mathbb T^l})$ and $(\mathbb{S}^1, d\theta_{l+1}^2)$ with warping function $\tilde v_{l+1}$, we can compute
$$
R(\hat g')=R(\hat{g}|_{\Sigma\times \mathbb T^l})-\frac{2\Delta \tilde v_{l+1}}{\tilde v_{l+1}}\geq R(\hat g)|_{\Sigma\times \mathbb T^l} = \tilde R
\, \mbox{ in }\mathring \Sigma \times \mathbb T^l,
$$
{where the inequality follows from \eqref{Eq: Jacobi}, $\mathcal{J}\tilde v_{l+1} \ge 0$ and $v_{l+1}>0$ in $\mathring \Sigma$.}
That is, $\mathring\Sigma$ has $\mathbb T^{l+1}$-stabilized scalar curvature $R'$ which is no less than $R|_\Sigma$.
\end{proof}

\begin{lemma}\label{Lem: homology minimizing}
Let $2\leq n\leq 7$. Let $(M^n,g)$ be a closed Riemannian manifold and $\beta$ be an $(n-1)$-homology class with $0 \neq \beta\in H_{n-1}(M)$. If $M$ has $\mathbb T^l$-stabilized scalar curvature $R$, then we can construct an embedded hypersurface $\Sigma$ with integer multiplicity representing $\beta$ such that $\Sigma$ has $\mathbb T^{l+1}$-stabilized scalar curvature $R'$ which is no less than $R|_\Sigma$ and the metric completion of $M-\Sigma$ associated with its boundary has $\mathbb T^{l}$-stabilized scalar-mean curvature $(R,0)$.
\end{lemma}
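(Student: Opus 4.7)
The plan is to adapt the proof of Lemma \ref{Lem: plateau} by replacing the Plateau problem with mass minimization in the nontrivial homology class $\beta$. By hypothesis, there exist positive smooth functions $v_1,\ldots,v_l$ on $M$ so that the warped product $(\hat M,\hat g)=(M\times\mathbb T^l,\,g+\sum_i v_i^2\mathrm d\theta_i^2)$ has scalar curvature equal to the $\mathbb T^l$-invariant extension $\tilde R$ of $R$. I would minimize area among $\mathbb T^l$-invariant hypersurfaces of the form $\Sigma\times\mathbb T^l$ in $\hat M$ with $[\Sigma]=\beta$; as in \eqref{Eq: conformal tilde g}, this is equivalent to minimizing mass in the conformal metric $\tilde g=(\prod_i v_i)^{2/(n-1)}g$ over integral currents on $M$ representing $\beta$.

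Since $M$ is closed and $\beta\neq 0$, the standard compactness and lower semicontinuity of integral currents produce a mass minimizer $T$ in the class $\beta$, and because $n\leq 7$ Federer's regularity theorem represents $T$ by a smoothly embedded hypersurface $\Sigma$ with integer multiplicity (possibly a sum of several components). The $\mathbb T^l$-invariant lift $\Sigma\times\mathbb T^l$ is then a constrained area minimizer in $(\hat M,\hat g)$ among $\mathbb T^l$-invariant competitors, and it is closed since $M$ is closed.

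Next I would run the argument of Lemma \ref{Lem: plateau} essentially verbatim to promote this constrained minimizer to a stable minimal hypersurface of $(\hat M,\hat g)$. The $\mathbb T^l$-invariance of $\hat g$ forces both the mean curvature and the Jacobi operator $\mathcal J=-\Delta-(\mathrm{Ric}(\nu)+|A|^2)$ on $\Sigma\times\mathbb T^l$ to be $\mathbb T^l$-invariant, so constrained criticality forces vanishing mean curvature, and the uniqueness (up to scale) of the first eigenfunction of $\mathcal J$ on the closed hypersurface $\Sigma\times\mathbb T^l$ yields a positive $\mathbb T^l$-invariant first eigenfunction $\tilde v_{l+1}$ that descends to a positive function $v_{l+1}$ on $\Sigma$. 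Stability then follows since a negative first eigenvalue would allow a $\mathbb T^l$-invariant area-decreasing normal deformation, contradicting the constrained minimizing property. This is simpler than the Plateau case because $\Sigma$ has no boundary, so no Dirichlet condition on eigenfunctions is needed.

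Finally, Schoen--Yau's rearrangement combined with the Fischer-Colbrie--Schoen warping construction using $v_{l+1}$ produces the metric $\hat g'=g|_\Sigma+\sum_i (v_i|_\Sigma)^2\mathrm d\theta_i^2+v_{l+1}^2\mathrm d\theta_{l+1}^2$ on $\Sigma\times\mathbb T^{l+1}$ with scalar curvature bounded below by $\tilde R|_{\Sigma\times\mathbb T^l}$, giving the claimed $\mathbb T^{l+1}$-stabilized scalar curvature $R'\geq R|_\Sigma$. For the completion statement, I would observe that $(\overline{M\setminus\Sigma})\times\mathbb T^l$ inherits the restriction of $\hat g$, hence retains scalar curvature $\tilde R$ in the interior, and its boundary lies along the minimal hypersurface $\Sigma\times\mathbb T^l$, producing exactly $\mathbb T^l$-stabilized scalar-mean curvature $(R,0)$. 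The main point requiring care is the handling of integer multiplicities and possible one-sidedness of individual components of $\Sigma$ when forming the completion; this is harmless because the scalar and mean curvature identities are purely local, so the construction runs componentwise.
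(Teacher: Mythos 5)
Your proposal is correct and follows essentially the same route as the paper: minimize mass in the conformal metric $\tilde g=(\prod_i v_i)^{2/(n-1)}g$ over integral currents representing $\beta$, use regularity for $n\leq 7$, lift to a constrained $\mathbb T^l$-invariant minimizer in $(\hat M,\hat g)$, and run the eigenfunction/warping argument of Lemma \ref{Lem: plateau} (simplified since $\Sigma$ is closed), with the completion statement following from minimality of $\Sigma$. The paper's proof is just a terser version of exactly this, deferring the details to Lemma \ref{Lem: plateau}.
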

\begin{proof}
Since $M$ has $\mathbb T^l$-stabilized scalar curvature $R$, there are finitely many smooth positive functions $v_1,v_2,\ldots, v_l$ on $M$ such that the warped product metric $\hat g$ defined by \eqref{Eq: hat g}
satisfies $R(\hat g)=\tilde R$, where $\tilde R$ is the $\mathbb T^l$-invariant extension of $R$ on $\hat M$.
It follows from geometric measure theory that we can find an embedded hypersurface $\Sigma$ with integer multiplicity representing the homology class $\beta$ which is homologically area-minimizing in $(M,\tilde g)$, where $\tilde g$ is the metric defined by \eqref{Eq: conformal tilde g}.
From a similar argument as in the proof of Lemma \ref{Lem: plateau} we conclude that $\Sigma$ has $\mathbb T^{l+1}$-stabilized scalar curvature $R'$ which is no less than $R|_\Sigma$. The last assertion comes immediately from the construction of $\Sigma$.
\end{proof}

\begin{definition}
We say that $(M,\partial_\pm,g)$ is a Riemannian band if $(M,g)$ is a {connected, compact, smooth Riemannian manifold with piecewise smooth boundary (e.g. a square in the plane),} and $\partial_+$ and $\partial_-$ are two disjoint non-empty smooth compact portions of $\partial M$ such that $\overline{\partial M-(\partial_+\cup \partial_-)}$ is also a compact smooth portion of $\partial M$ {(note that $\overline{\partial_{+}\cup\partial_{-}}$ and $\overline{M-\partial_{+}\cup\partial_{-}}$ can intersect transversally)}.
\end{definition}
\begin{lemma}\label{Lem: smooth mu bubble}
Let $2\leq n\leq 7$. Let $(M,\partial_\pm,g)$ be a Riemannian band with $\overline{\partial M-(\partial_+\cup\partial_-)}=\emptyset$ and $\dist(\partial_+,\partial_-)> 2d_0$. If M has $\mathbb T^l$-stabilized scalar curvature $R$, then we can find an embedded hypersurface $\Sigma$ {such that it bounds a region with $\partial_-$, and} has $\mathbb T^{l+1}$-stabilized scalar curvature $R'$ with
$$
R' > R|_\Sigma-\frac{\pi^2}{d_0^2}.
$$
\end{lemma}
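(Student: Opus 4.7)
The plan is to run Gromov's warped $\mu$-bubble construction in the $\mathbb{T}^{l}$-stabilized setting. As in Lemma \ref{Lem: plateau}, pass to the warped product $(\hat M,\hat g)$ with $\hat M=M\times\mathbb T^{l}$ and $\hat g=g+\sum_{i=1}^{l}v_{i}^{2}\mathrm d\theta_{i}^{2}$, so that $R(\hat g)=\tilde R$, the $\mathbb T^{l}$-invariant extension of $R$. Apply Lemma \ref{Lem: weight function 1} with $\nu_{0}=\pi^{2}/d_{0}^{2}$ to obtain $\delta_{0}>0$; fix $\delta\in(0,\delta_{0})$ and the associated $h_{\delta}\colon(-T_{\delta},T_{\delta})\to\mathbb R$. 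Using the hypothesis $\dist_g(\partial_+,\partial_-)>2d_{0}$, mollify a signed distance function separating $\partial_\pm$ and rescale to obtain a smooth function $\rho\colon\mathring M\to(-T_{\delta},T_{\delta})$ with $|\mathrm d\rho|_g<1$, $\rho\to -T_\delta$ near $\partial_-$, $\rho\to +T_\delta$ near $\partial_+$, and $\rho^{-1}([-d_{0},d_{0}])$ compactly contained in $\mathring M$. Pulling back $h_{\delta}\circ\rho$ $\mathbb T^{l}$-invariantly to $\hat M$ gives a smooth function $h$ that blows up to $+\infty$ near $\partial_{-}\times\mathbb T^{l}$ and to $-\infty$ near $\partial_{+}\times\mathbb T^{l}$.

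Consider the warped $\mu$-bubble functional
\[
\mathcal A^{h}(\hat\Omega)=\mathcal H^{n+l-1}(\partial^{*}\hat\Omega)-\int_{\hat\Omega}h\,\mathrm dV_{\hat g}
\]
on $\mathbb T^{l}$-invariant Caccioppoli sets $\hat\Omega=\Omega\times\mathbb T^{l}\subset\hat M$ that agree with a fixed reference set $\hat\Omega_{0}$ (containing a neighborhood of $\partial_{-}\times\mathbb T^{l}$) outside a compact subset of $\mathring{\hat M}$. Restricted to the $\mathbb T^{l}$-invariant class this is a weighted $\mu$-bubble problem on $M$ with weight $u=\prod_{i}v_{i}$, so the existence and smoothness of an interior $\mathbb T^{l}$-invariant minimizer $\hat\Sigma=\Sigma\times\mathbb T^{l}$ follow from the standard theory (e.g.\ the exposition in \cite[Sections 3--4]{CL20}); the constraint $n\leq 7$ provides regularity on the reduced $M$-problem, while the $\pm\infty$-behavior of $h$ near $\partial_{\pm}$ supplies the barriers.

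The first variation forces the mean curvature of $\hat\Sigma$ in $(\hat M,\hat g)$ to equal $h|_{\hat\Sigma}$, and stability gives that the Jacobi operator $\mathcal J=-\Delta_{\hat\Sigma}-(|\hat A|^{2}+\mathrm{Ric}_{\hat g}(\hat\nu,\hat\nu)+\partial_{\hat\nu}h)$ has non-negative first eigenvalue, with positive first eigenfunction $\tilde v_{l+1}$ which may be taken $\mathbb T^{l}$-invariant by the symmetry of $\mathcal J$, hence descends to a positive smooth function $v_{l+1}$ on $\Sigma$. Following the rearrangement argument of Lemma \ref{Lem: plateau}, the doubly-warped metric $\hat g'=\hat g|_{\hat\Sigma}+v_{l+1}^{2}\mathrm d\theta_{l+1}^{2}$ on $\hat\Sigma\times S^{1}$ has scalar curvature $R(\hat g|_{\hat\Sigma})-2\Delta_{\hat\Sigma}\tilde v_{l+1}/\tilde v_{l+1}$; combining this identity with $\mathcal J\tilde v_{l+1}\geq 0$, the Gauss equation, and $H=h$ yields
\[
R(\hat g')\geq\tilde R|_{\hat\Sigma}+|\hat A|^{2}+h^{2}+2\partial_{\hat\nu}h\geq R|_{\Sigma}+h^{2}-2|\nabla h|_{\hat g}.
\]
Since $h=h_{\delta}\circ\rho$ and $|\mathrm d\rho|<1$, we have $|\nabla h|_{\hat g}\leq|h_{\delta}'(\rho)|$, and the defining property of $h_{\delta}$ with $\nu_{0}=\pi^{2}/d_{0}^{2}$ gives $h^{2}-2|\nabla h|_{\hat g}\geq\delta-\pi^{2}/d_{0}^{2}>-\pi^{2}/d_{0}^{2}$ everywhere. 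Thus $\Sigma$ has $\mathbb T^{l+1}$-stabilized scalar curvature $R'\geq R|_{\Sigma}-\pi^{2}/d_{0}^{2}$ with the $(l+1)$-st warping $v_{l+1}$, and the region bounded by $\Sigma$ together with $\partial_{-}$ is built into the choice of reference set $\hat\Omega_{0}$.

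The principal obstacle is producing a smooth $\mathbb T^{l}$-invariant minimizer when $n+l$ may exceed seven: restricting to $\mathbb T^{l}$-equivariant competitors and re-interpreting the problem as a weighted $\mu$-bubble on $M$ is what reduces regularity to the favorable range $n\leq 7$. A secondary technicality is the construction of $\rho$ with $|\mathrm d\rho|_{g}<1$ and $\rho^{-1}([-d_{0},d_{0}])\Subset\mathring M$, which is precisely where the separation hypothesis $\dist_g(\partial_{+},\partial_{-})>2d_{0}$ is used.
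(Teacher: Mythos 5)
Your overall architecture matches the paper's: reduce the $\mathbb T^l$-invariant problem to a weighted (conformally modified) $\mu$-bubble problem on the $n$-dimensional band so that the regularity threshold $n\leq 7$ applies, use the blow-up of the prescribing function near $\partial_\pm$ as barriers, and then run the stability-plus-warping argument with the Gauss equation to transfer the scalar curvature lower bound to $\Sigma$. All of that is fine. The gap is in your choice of prescribing function and the resulting construction of $\rho$.

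You propose to use $h_\delta$ from Lemma \ref{Lem: weight function 1} with $\nu_0=\pi^2/d_0^2$, composed with a function $\rho:\mathring M\to(-T_\delta,T_\delta)$ satisfying $|\mathrm d\rho|_g<1$ and $\rho\to\pm T_\delta$ near $\partial_\pm$. Such a $\rho$ cannot exist in general: a function with $|\mathrm d\rho|_g<1$ satisfies $|\rho(x)-\rho(y)|\leq\dist_g(x,y)$, so its oscillation across the band is at most $\dist_g(\partial_+,\partial_-)$, whereas you need it to sweep out the full interval of length $2T_\delta$. The hypothesis only gives $\dist_g(\partial_+,\partial_-)>2d_0$, while $T_\delta$ produced by Lemma \ref{Lem: weight function 1} is the large parameter $k$ and typically satisfies $T_\delta\gg d_0$ (it must be taken large for the estimate on $[-d_0,d_0]$ to hold). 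Rescaling the signed distance function onto $(-T_\delta,T_\delta)$ destroys the bound $|\mathrm d\rho|_g<1$, and without that bound the key inequality $|\nabla(h_\delta\circ\rho)|\leq|h_\delta'(\rho)|$ fails, so the conclusion $h^2-2|\nabla h|\geq\delta-\pi^2/d_0^2$ collapses. In short, Lemma \ref{Lem: weight function 1} is designed for the opposite regime (a very wide band, as in Proposition \ref{Prop: closed}, where one wants a strictly positive conclusion), not for a band of width barely exceeding $2d_0$ where one accepts a loss of $\pi^2/d_0^2$.

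The repair is what the paper does: take $\rho:(M,\partial_\pm)\to([-d_0,d_0],\pm d_0)$ with $\Lip\rho<1$ (this exists precisely because $\dist(\partial_+,\partial_-)>2d_0$) and use the explicit profile
$$h(t)=-\frac{n+l-1}{n+l}\,\frac{\pi}{d_0}\tan\!\left(\frac{\pi}{2d_0}t\right),$$
which blows up at $t=\pm d_0$ (supplying the barriers on the short interval) and satisfies $\frac{n+l}{n+l-1}h^2-2|\mathrm dh|=-\frac{n+l-1}{n+l}\frac{\pi^2}{d_0^2}\geq-\frac{\pi^2}{d_0^2}$. With this substitution the rest of your argument goes through; note also that keeping the $|A|^2\geq H^2/(n+l-1)$ term, rather than discarding it as you do, is what produces the favorable coefficient $\frac{n+l}{n+l-1}$ that makes the tangent profile close exactly.
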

\begin{proof}
Since $\dist(\partial_+,\partial_-)>2d_0$, we can construct a smooth short function $\rho:(M,\partial_\pm)\to ([-d_0,d_0],\pm d_0)$ such that $\rho^{-1}(d_0)=\partial_+$, $\rho^{-1}(-d_0)=\partial_-$ and $\Lip \rho<1$. Take $h: [-d_0,d_0] \to [-\infty, \infty]$ such that
$$
h=-\frac{n+l-1}{n+l}\cdot\frac{\pi}{d_0}\tan\left(\frac{\pi}{2d_0}t\right)
$$
and $h$ satisfies
$$
\frac{n+l}{n+l-1}h^2-2|h'|=-\frac{n+l-1}{n+l}\cdot\frac{\pi^2}{d_0^2}
> -\frac{\pi^2}{d_0^2}.
$$
Since $M$ has $\mathbb T^l$-stabilized scalar curvature $R$, there are finitely many smooth positive functions $v_1,v_2,\ldots, v_l$ on $M$ such that the warped product metric $\hat g$ defined by \eqref{Eq: hat g}
satisfies $R(\hat g)=\tilde R$, where $\tilde R$ is the $\mathbb T^l$-invariant extension of $R$ on $\hat M = M \times \mathbb T^l$. Let us consider the following minimization problem. Set
$$
\mathcal A^h(\Omega)=\mathcal H^{n-1}_{\tilde g}(\partial\Omega \cap \mathring M)-\int_{\mathring M}(\chi_\Omega-\chi_{\Omega_0})(h\circ \rho)\left(\prod_i v_i\right)^{-\frac{1}{n-1}}\,\mathrm d\mathcal H^n_{\tilde g},
$$
where $\tilde g$ is the metric defined by \eqref{Eq: conformal tilde g}, $\Omega_0=\rho^{-1}((-\infty,0])$ and $\Omega$ belongs to the class
\begin{equation}\label{defn: C}
\mathcal C=\{\text{Caccioppoli sets $\Omega$ in $M$ such that $\Omega\Delta \Omega_0\Subset \mathring M$}\}.
\end{equation}
Notice that the function
\begin{equation}\label{Eq: tilde h}
\tilde h:=(h\circ \rho)\left(\prod_i v_i\right)^{-\frac{1}{n-1}}
\end{equation}
takes value $+\infty$ on $\partial_-$ and takes value $-\infty$ on $\partial_+$. It follows from \cite[Proposition 2.1]{Zhu21} that we can find a smooth minimizer $\Omega_{\mathrm{min}}$ of $\mathcal A^h$ in $\mathcal C$. Let $\Sigma=\partial\Omega_{\mathrm{min}}\cap \mathring M$. Then $\Sigma$ and $\partial_-$ bound the region $\Omega_{\mathrm{min}}$.

{Notice that the above minimizing problem is equivalent to minimizing the functional
$$\hat{\mathcal A}^h(\Omega)=\mathcal H_{\hat g}^{n+l-1}((\partial\Omega\cap \mathring M)\times \mathbb T^l)-\int_{\mathring M\times\mathbb T^l}(\chi_{\Omega\times \mathbb T^l}-\chi_{\Omega_0\times \mathbb T^l})h\circ \rho\,\mathrm d\mathcal H^{n+l}_{\hat g}$$
among the class $\mathcal C$ on $(M\times \mathbb T^l,\hat g)$ with $\hat g=g+\sum_{i=1}^l v_i^2\mathrm d\theta_i^2$. It follows from \cite[(1.3)]{ZZ20} that the Jacobi operator $\mathcal J^h$ associated to the functional $\mathcal A^h$ is
$$\mathcal J^h=-\Delta_{\Sigma\times \mathbb T^l}-(\mathrm{Ric}_{\hat g}(\nu)+|A|^2+\partial_\nu(h\circ \rho\circ\pi)).$$
Using Schoen--Yau's rearrangement trick and the facts $|A|^2\geq \frac{(h\circ\rho)^2}{n+l-1}$ and $\partial_\nu(h\circ \rho)\geq -|h'|\circ\rho$, we know that the modified Jacobi operator
$$\mathcal J^h_*=-\Delta_{\Sigma\times \mathbb T^l}- \frac{1}{2}\left(R(\hat g)|_{\Sigma\times \mathbb T^l} - R(\hat{g}|_{\Sigma\times \mathbb T^l})+\left(\frac{n+l}{n+l-1}h^2-2|h'|\right)\circ \rho\right)$$
is a positive operator.}
 With a similar argument as in the proof of Lemma \ref{Lem: plateau} by taking $v_{l+1}$ to be the first eigenfunction of the modified Jacobi operator $\mathcal J^h_*$, we obtain on $(\Sigma \times \mathbb T^{l+1}, \hat g')$ that
 \begin{equation}\label{Eq: scalar change}
     R(\hat g')= R(\hat g)|_{\Sigma\times \mathbb T^l}+\left(\frac{n+l}{n+l-1}h^2-2|h'|\right)\circ \rho,
 \end{equation}
where $\hat g'$ is defined by \eqref{Eq: hat g'}. {Using the expression of $u$, we have
$$ R(\hat g')> R(\hat g)|_{\Sigma\times \mathbb T^l}-\frac{\pi^2}{d_0^2}.$$}
\end{proof}

\begin{lemma}\label{Lem: corner mu bubble}
Let $2\leq n\leq 7$. Let $(M,\partial_\pm,g)$ be a Riemannian band with $\Gamma:=\overline{\partial M-(\partial_+\cup\partial_-)}\neq\emptyset$ and $\dist(\partial_+,\partial_-)>2d_0$. If the dihedral angles between $\Gamma$ and $\partial_+\cup\partial_-$ are less than $\pi/2$ and $(M,\Gamma)$ has $\mathbb T^l$-stabilized scalar-mean curvature $(R,H)$, then we can find a hypersurface $\Sigma$ (possibly with boundary $\partial\Sigma$) intersecting $\Gamma$ orthogonally such that
\begin{itemize}\setlength{\itemsep}{1mm}
\item $\Sigma$ and $\partial_-$ bound a relative region $\Omega$ relative to $\Gamma$, namely we have $\partial\Omega-(\Sigma\cup \partial_-)\subset \Gamma$;
\item $(\Sigma,\partial\Sigma)$ has $\mathbb T^{l+1}$-stabilized scalar-mean curvature $(R',H')$ where {$R' > R-\pi^2/d_0^2$} and $H'=H$.
\end{itemize}
\end{lemma}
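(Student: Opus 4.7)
The plan is to run the free boundary $\mu$-bubble construction, extending Lemma \ref{Lem: smooth mu bubble} to the corner setting. I first pass to the warped product $(\hat M,\hat g)=(M\times\mathbb T^l,\,g+\sum_i v_i^2\,\mathrm d\theta_i^2)$ from the $\mathbb T^l$-stablized scalar-mean curvature structure, so that $R(\hat g)=\tilde R$ in $\mathring{\hat M}$ and the mean curvature of $\hat\Gamma:=\Gamma\times\mathbb T^l$ equals $\tilde H$. Construct a short function $\rho:M\to[-d_0,d_0]$ with $\rho^{-1}(\pm d_0)=\partial_\pm$ and $\Lip\rho<1$ (possible since $\dist(\partial_+,\partial_-)>2d_0$), take the same weight $h(t)=-\tfrac{n+l-1}{n+l}\tfrac{\pi}{d_0}\tan(\pi t/(2d_0))$ as in Lemma \ref{Lem: smooth mu bubble}, and form the functional
$$\mathcal A^h(\Omega)=\mathcal H^{n-1}_{\tilde g}(\partial^*\Omega\cap\mathring M)-\int_{\mathring M}(\chi_\Omega-\chi_{\Omega_0})\tilde h\,\mathrm d\mathcal H^n_{\tilde g}$$
on the Caccioppoli class $\mathcal C$ of \eqref{defn: C} enlarged to allow symmetric differences compactly contained in $\mathring M\cup\Gamma$; here $\tilde h$ and $\tilde g$ are as in \eqref{Eq: tilde h} and \eqref{Eq: conformal tilde g}, and $\Omega_0=\rho^{-1}((-\infty,0])$.

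Next I minimize $\mathcal A^h$ over $\mathcal C$. The singularities $\tilde h\to\mp\infty$ at $\partial_\pm$ confine competitors strictly inside $M\setminus(\partial_+\cup\partial_-)$, while the hypothesis that the dihedral angle between $\Gamma$ and $\partial_\pm$ is strictly less than $\pi/2$ gives a geometric barrier at the corners $\Gamma\cap\partial_\pm$: sharpening these corners by a local cut-off lowers $\mathcal A^h$, so no minimizer can accumulate there. Together with the free-boundary extension of \cite[Proposition 2.1]{Zhu21}, this produces a smooth minimizer $\Omega_{\min}$ whose interior boundary $\Sigma=\partial\Omega_{\min}\cap\mathring M$ meets $\Gamma$ orthogonally along $\partial\Sigma$. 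Taking $\Omega=\Omega_{\min}$ gives $\partial\Omega-(\Sigma\cup\partial_-)\subset\Gamma$, as required.

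The stability of $\Omega_{\min}$ produces a Jacobi operator on $\Sigma$ equipped with a Robin boundary condition on $\partial\Sigma$ involving $\mathrm{II}^\Gamma(\nu,\nu)$. As in Lemma \ref{Lem: plateau}, since $\Sigma\times\mathbb T^l$ is a constrained area minimizer inside $(\hat M,\hat g)$, $\mathbb T^l$-invariance yields a $\mathbb T^l$-invariant positive first eigenfunction, which descends to a positive $v_{l+1}$ on $\Sigma$ with the same Robin condition at $\partial\Sigma$. Setting $\hat g'=g|_\Sigma+\sum_i(v_i|_\Sigma)^2\mathrm d\theta_i^2+v_{l+1}^2\mathrm d\theta_{l+1}^2$, the Schoen--Yau rearrangement together with Fischer-Colbrie--Schoen warping gives
$$R(\hat g')\ge R(\hat g)|_{\Sigma\times\mathbb T^l}+\left(\tfrac{n+l}{n+l-1}h^2-2|\mathrm dh|\right)\circ\rho\ge\tilde R-\frac{\pi^2}{d_0^2}\quad\text{in }\mathring\Sigma\times\mathbb T^{l+1},$$
which yields $R'\ge R-\pi^2/d_0^2$. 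For the boundary equality, orthogonality $\Sigma\perp\Gamma$ along $\partial\Sigma$ identifies the outer conormal of $\partial\Sigma$ in $\Sigma$ with the outer normal of $\Gamma$ in $M$, so the derivatives $\partial_\nu\log v_i$ transfer from $\hat\Gamma$ to $\partial\Sigma\times\mathbb T^l$ unchanged, and the Robin condition on $v_{l+1}$ is exactly calibrated to make the mean curvature of $\partial\Sigma\times\mathbb T^{l+1}$ in $(\Sigma\times\mathbb T^{l+1},\hat g')$ match $\tilde H$, giving $H'=H$.

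The main obstacle is the final boundary calculation: one has to pin down precisely which Robin condition the free-boundary second variation forces on $v_{l+1}$, and verify that the warping by $v_{l+1}$ preserves the boundary mean curvature on the nose rather than only up to a lower bound. A secondary technical point is free-boundary regularity at the acute corners $\Gamma\cap\partial_\pm$, which should follow from the strict angle inequality via a standard reflection/barrier argument, but requires care in the weighted setting.
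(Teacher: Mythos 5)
Your proposal is correct and follows essentially the same route as the paper: minimize the weighted functional $\mathcal A^h$ of Lemma \ref{Lem: smooth mu bubble} in the corner setting, use the acute-angle hypothesis to keep minimizers away from $\partial_\pm$, and extract $v_{l+1}$ from the free-boundary Jacobi operator before warping. The only step the paper actually spells out is the barrier you describe somewhat loosely: it truncates every competitor by tubular neighborhoods $\mathcal N_s$ of $\partial_\pm$ whose boundaries meet $\Gamma$ at acute angles and have mean curvature $H_s<-\tilde h$, and a divergence-theorem computation shows this truncation does not increase $\mathcal A^h$, which is precisely what forces a minimizing sequence to stay a fixed distance from the corners.
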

\begin{proof}
The proof is almost the same as that of Lemma \ref{Lem: smooth mu bubble}. {We still consider the minimizing problem for the functional
$$
\mathcal A^h(\Omega)=\mathcal H^{n-1}_{\tilde g}(\partial\Omega \cap \mathring M)-\int_{\mathring M}(\chi_\Omega-\chi_{\Omega_0})(h\circ \rho)\left(\prod_i v_i\right)^{-\frac{1}{n-1}}\,\mathrm d\mathcal H^n_{\tilde g},
$$
but among the class
$$
\mathcal C=\{\text{Caccioppoli sets $\Omega$ in $M$ such that $\Omega\Delta \Omega_0\Subset  M\setminus(\partial_+\cup\partial_-)$}\}.
$$
Notice that we are considering warped $h$-hypersurface with free boundary here.}  The key to finding a smooth minimizer of $\mathcal A^h$ is the existence of a minimizing sequence $\Omega_i$ whose reduced boundary $\partial\Omega_i\cap \mathring M$ stays away from a fixed neighborhood of $\partial_+\cup\partial_-$, {where the smoothness of the limit can be obtained from the regularity result \cite[Theorem 2.1]{SWZ24}.}

Let $\Omega_i$ be a minimizing sequence of the functional $\mathcal A^h$ in $\mathcal C$. Namely, we have
$$
\lim_{i\to\infty}\mathcal A^h(\Omega_i)=\inf_{\Omega\in\mathcal C}\mathcal A^h(\Omega).
$$
We show how to modify $\Omega_i$ to obtain a new minimizing sequence $\tilde \Omega_i$ such that the reduced boundary $\partial\tilde\Omega_i\cap\mathring M$ stays away from a fixed neighborhood of $\partial_+$. The modification can be done similarly around $\partial_-$ and this provides the desired minimizing sequence. From continuity we can find a family of tubular neighborhoods $\{\mathcal N_s\}_{s\in (0,s_0]}$ of $\partial_+$ such that
\begin{itemize}\setlength{\itemsep}{1mm}
\item $\cap_s\mathcal N_s=\partial_+$;
\item $\partial\mathcal N_s\cap\mathring M$ are equidistant hypersurfaces to $\partial_+$ which intersect $\Gamma$ in acute angles (the inner product of outward unit normals of $\partial\mathcal N_s\subset \mathcal N_s$ and $\Gamma\subset M$ is positive);
\item $\partial\mathcal N_s$ has mean curvature $H_s$ with respect to outward unit normal of $\partial\mathcal N_s\subset \mathcal N_s$, where $H_s<-\tilde h|_{\partial\mathcal N_s}$ and $\tilde h$ is the function defined by \eqref{Eq: tilde h}.
\end{itemize}
Define
$$
\tilde \Omega_i=\Omega_i-\mathcal N_{s_0}.
$$
Clearly $\tilde \Omega_i$ still belongs to the class $\mathcal C$ {defined as above}. It remains to show $\mathcal A^h(\tilde \Omega_i)\leq \mathcal A^h(\Omega_i)$. {From the theory of Caccioppoli sets, we have
\[
\begin{split}
    \mathcal H_g^{n-1}(\partial(\Omega_i\cap\mathcal N_{s_0}^c)\cap \mathring M)+&\mathcal H_g^{n-1}(\partial(\Omega_i\cup\mathcal N_{s_0}^c)\cap\mathring M)\\&\leq \mathcal H_g^{n-1}(\partial\Omega_i\cap\mathring M)+\mathcal H_g^{n-1}(\partial\mathcal N_{s_0}^c\cap \mathring M),
\end{split}
\]
where $\mathcal N_{s_0}^c$ denotes the complement of $\mathcal N_{s_0}$ in $M$.
By the definition of $\mathcal A^h$ we can compute
\[
\begin{split}
\mathcal A^h(\tilde \Omega_i)-&\mathcal A^h(\Omega_i)\leq\int_{\Omega_i\cap \mathcal N_{s_0}}\tilde h\,\mathrm d\mathcal H^n_{\tilde g}\\
&+\mathcal H^{n-1}_{\tilde g}(\partial\mathcal N_{s_0}^c\cap\mathring M)-\mathcal H^{n-1}_{\tilde g}(\partial (\Omega_i\cup \mathcal N_{s_0}^c)\cap \mathring M).
\end{split}
\]
Let $X$ be the unit outward normal vector field determined by $\partial \mathcal N_s\subset \mathcal N_s$. Then the difference of the two area terms in the second line is no greater than
$$
\int_{\partial\mathcal N_{s_0}^c\cap \mathring M}X\cdot X\,\mathrm d\mathcal H^{n-1}_{\tilde g}
+\int_{\partial (\Omega_i\cup \mathcal N_{s_0}^c)\cap \mathring M} X\cdot \nu\,\mathrm d\mathcal H^{n-1}_{\tilde g},
$$
where $\nu$ is the unit outward normal of $\partial (\Omega_i\cup \mathcal N_{s_0}^c)\subset \Omega_i\cup \mathcal N_{s_0}^c$.} It follows from the divergence theorem that the above terms equal
$$
\int_{\Omega_i\cap \mathcal N_{s_0}}\mathrm{div}\, X\,\mathrm d\mathcal H^n_{\tilde g}-\int_{\Omega_i\cap \mathcal N_{s_0}\cap \Gamma}X\cdot \mathbf n\,\mathrm d\mathcal H^{n-1}_{\tilde g},
$$
where $\mathbf n$ is the outward unit normal of $\Gamma\subset M$. Combining with the acute-angle condition we finally obtain
$$
\mathcal A^h(\tilde \Omega_i)-\mathcal A^h(\Omega_i)\leq \int_{\Omega_i\cap \mathcal N_{s_0}}(\tilde h+H_s)\,\mathrm d\mathcal H^n_{\tilde g}\leq 0.
$$
This completes the proof.

{Now we work with the smooth minimizer $\Omega_{\mathrm{min}}$. Denote $\Sigma=\overline{\partial\Omega_{\mathrm{min}}\setminus\Gamma}$. Then $\Sigma$ and $\partial_-$ bound a relative region $\Omega$ relative to $\Gamma$. With the warped-product interpretation as before, it follows from a similar computation as in \cite[(1.3)]{ZZ20} and \cite[Appendix]{RS97} (handling the interior and boundary variations respectively) that the Jacobi operator associated to $\mathcal A^h$ is
$$(\mathcal J^h,\mathcal B)=\left(-\Delta_{\Sigma\times \mathbb T^l}-(\mathrm{Ric}_{\hat g}(\nu)+|A|^2+\partial_\nu(h\circ \rho\circ\pi)),\frac{\partial}{\partial \vec n}-\mathrm{II}(\nu,\nu)\right),$$
where $\vec n$ denotes the outward pointing unit normal vector field on $\partial \Sigma\subset \Sigma$, $\nu$ denotes the outward pointing unit normal vector field on $\Sigma\subset \bar\Omega_{\mathrm{min}}$, $\mathrm{II}$ is the second fundamental form of $\partial M\times \mathbb T^l$ with respect to the outward pointing unit normal vector field. As in the proof of the previous lemma, we use $\mathcal J^h_*$ to denote the modified operator of $\mathcal J^h$ and take $v_{l+1}$ to be the first eigenfunction of the modified operator $\mathcal J^h_*$ with the Robin boundary condition $\mathcal Bv_{l+1}=0$. From a direct computation we have
\begin{equation}\label{Eq: scalar change 2}
     R(\hat g')= R(\hat g)|_{\Sigma\times \mathbb T^l}+\left(\frac{n+l}{n+l-1}h^2-2|h'|\right)\circ \rho >R(\hat g)|_{\Sigma\times \mathbb T^l}-\frac{\pi^2}{d_0^2},
 \end{equation} and
 \begin{equation}\label{Eq: mean curvature change}
     \begin{split}
     H_{\partial\Sigma\times \mathbb T^{l+1}}(\hat g')= {} & H_{\partial\Sigma\times \mathbb T^l}(\hat g)+v_{l+1}^{-1}\frac{\partial v_{l+1}}{\partial \vec n} \\
     = {} & H_{\partial\Sigma\times \mathbb T^l}(\hat g)+\mathrm{II}(\nu,\nu) \\[0.6mm]
     = {} & H_{\partial M\times \mathbb T^l}(\hat g), \\
     \end{split}
 \end{equation}
where $\hat g$ and $\hat g'$ denote the warped product metrics defined in \eqref{Eq: hat g} and \eqref{Eq: hat g'} respectively, and we use the relation
$$
H_{\partial\Sigma\times \mathbb T^l}(\hat g)=\mathrm{tr}^{\hat g}_{\partial\Sigma\times \mathbb T^l}\mathrm{II}
$$
coming from the fact that $\partial\Sigma\times \mathbb T^l$ has free boundary.
This completes the proof.
}
\end{proof}
\subsection{Inradius estimate and intrinsic diameter estimate}\label{Inradius estimate and intrinsic diameter estimate}
\begin{definition}\label{def: inradius}
Let $(M,\partial M,g)$ be a Riemannian manifold with non-empty boundary and $\Gamma$ be a smooth (possibly empty) portion of $\partial M$ such that $\partial M \neq \Gamma$. The inradius of $(M,\partial M,\Gamma)$ is defined to be
$$
r_{\mathrm{in}}(M,\partial M,\Gamma)=\sup_{p\in M}\dist_{g}(p,\partial M-\Gamma).
$$
If $\Gamma=\emptyset$, we write $r_{\mathrm{in}}(M,\partial M)$ instead of $r_{\mathrm{in}}(M,\partial M,\emptyset)$ for short.
\end{definition}

\begin{lemma}\label{Lem: inradius}
Let $(\Sigma,\partial\Sigma,g)$ be a surface or a curve with non-empty boundary and $\Gamma$ be a smooth (possibly empty) portion of $\partial\Sigma$ such that $\partial \Sigma \neq \Gamma$. If $(\Sigma,\Gamma)$ has $\mathbb T^l$-stabilized scalar-mean curvature $(R,H)$ with $R\geq \sigma_0>0$ and $H\geq 0$, then we have $r_{\mathrm{in}}(\Sigma,\partial\Sigma,\Gamma)\leq 2\pi/\sqrt{\sigma_0}$.
\end{lemma}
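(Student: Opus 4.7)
The plan is to argue by contradiction. Suppose $r_{\mathrm{in}}(\Sigma,\partial\Sigma,\Gamma) > 2\pi/\sqrt{\sigma_0}$, and fix $p\in\Sigma$ together with $d_0 > \pi/\sqrt{\sigma_0}$ satisfying $\dist_g(p,\partial\Sigma-\Gamma) > 2d_0$. I would split the argument according to $\dim\Sigma$: for a curve, run a direct Riccati/Sturm comparison on the warped-product base; for a surface, first reduce to the curve case via a corner $\mu$-bubble and then apply the same Riccati argument at the end.

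For the curve case, pass to the warped product $\Sigma\times\mathbb T^l$ with metric $ds^2 + \sum_{i=1}^l v_i(s)^2\,d\theta_i^2$ furnished by Definition \ref{Defn: stablized curvature}. A direct computation of its scalar curvature gives
\[
-2L'' - (L')^2 - \sum_i \left(\frac{v_i'}{v_i}\right)^2 \;\geq\; \sigma_0, \qquad L := \sum_i \log v_i,
\]
and the substitution $w = e^{L/2} = (\prod_i v_i)^{1/2} > 0$ (after discarding the non-negative third term) reduces this to the ODE inequality $w'' + (\sigma_0/4)\,w \leq 0$ along $\Sigma$. The logarithmic derivative $\phi := w'/w$ then satisfies the Riccati inequality $\phi' + \phi^2 + \sigma_0/4 \leq 0$, and comparison with the model solution $\phi^{\ast}$ of $(\phi^{\ast})' = -\sigma_0/4 - (\phi^{\ast})^2$ forces every interval on which $\phi$ stays finite to have length at most $\pi/\sqrt{\sigma_0/4} = 2\pi/\sqrt{\sigma_0}$. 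Because $v_i > 0$ on $\Sigma$ keeps $\phi$ finite on every connected component of $\Sigma$, each such component has length at most $2\pi/\sqrt{\sigma_0}$; in particular $\dist_g(p,\partial\Sigma-\Gamma) \leq 2\pi/\sqrt{\sigma_0}$, contradicting our assumption.

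For the surface case I would first construct a Riemannian band inside $\Sigma$ around $p$: pick a regular value $\epsilon\in(0,d_0)$ of $\dist_g(\cdot,p)$ and set $M := \overline{B_{2d_0}(p)} \setminus B_\epsilon(p) \subset \Sigma$ with $\partial_- = \partial B_\epsilon(p)$, $\partial_+ = \partial B_{2d_0}(p)\cap\Sigma$, and corner portion $\Gamma_M := \overline{\partial M - (\partial_+ \cup \partial_-)} \subset \Gamma$ (the inclusion uses $2d_0 < \dist_g(p,\partial\Sigma-\Gamma)$). After a small generic perturbation of the two radii arranging the dihedral angles between $\Gamma_M$ and $\partial_\pm$ to be acute, Lemma \ref{Lem: corner mu bubble} (or Lemma \ref{Lem: smooth mu bubble} when $\Gamma_M = \emptyset$) produces a curve $\Sigma'$ bounding a relative region with $\partial_-$ which carries $\mathbb T^{l+1}$-stabilized scalar-mean curvature $(R',H')$ with
\[
R' \;\geq\; \sigma_0 - \frac{\pi^2}{d_0^2} \;>\; 0 \qquad \text{and} \qquad H' = H \geq 0 \text{ on } \partial\Sigma' \subset \Gamma_M.
\]
Each component of $\Sigma'$ is then either a closed circle---ruled out by integrating $w'' + (R'/4)w \leq 0$ over the circle, using $\oint w'' = 0$ together with $w > 0,\ R' > 0$---or an arc whose two endpoints both lie on $\Gamma$, which contradicts the strict monotonicity $\phi' \leq -R'/4 - \phi^2 < 0$ coming from the Riccati inequality, once one observes that $H' \geq 0$ at the two endpoints forces $\phi \leq 0$ at the left endpoint and $\phi \geq 0$ at the right.

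The principal technical point, as I see it, is verifying the acute-dihedral-angle hypothesis required for Lemma \ref{Lem: corner mu bubble} in the surface case; this is handled by a standard small perturbation of the radii $\epsilon$ and $2d_0$, combined if necessary with a gentle smoothing of $M$ near its corners to tilt the dihedral angles acute (the geodesic circles around $p$ are transverse to $\Gamma$ for almost every radius). Non-compactness of $\Sigma$ poses no difficulty since every step is confined to the compact band $M$, and the remaining ingredients---the warped-product scalar computation, the Riccati comparison, and the closed-curve integration trick---are routine once the $\mu$-bubble has been produced.
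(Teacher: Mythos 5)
Your proposal is correct and follows essentially the same route as the paper: argue by contradiction, build a Riemannian band around $p$ inside $\Sigma$ from (a smoothing of) the distance function, deform the outer boundary near the corners to force acute dihedral angles with $\Gamma$ (your ``tilting near the corners'' is precisely the paper's graph deformation over $S'$; note that perturbing the radii alone would not achieve acuteness), and then apply Lemma \ref{Lem: corner mu bubble} to extract a curve carrying $\mathbb T^{l+1}$-stabilized scalar-mean curvature $(R',H')$ with $R'>0$ and $H'\ge 0$. The only real divergence is the endgame: the paper rules out such a curve by the doubling trick plus the non-existence of positive scalar curvature metrics on tori, and handles the curve case of the lemma by crossing with $\mathbb S^1$, whereas you make both steps explicit and self-contained via the Riccati inequality for $\phi=\tfrac12\bigl(\log\prod_i v_i\bigr)'$ together with the endpoint signs of $\phi$ forced by $H'\ge 0$ --- an equally valid finish.
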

\begin{proof}
It suffices to prove the case when $\Sigma$ is a surface since we can consider $\Sigma\times \mathbb S^1$ instead of $\Sigma$ when $\Sigma$ is a curve.

We argue by contradiction. Suppose that there is a point $p$ such that $\dist(p,\partial\Sigma-\Gamma)>2\pi/\sqrt{\sigma_0}$. Without loss of generality we may assume $p$ to be an interior point. In particular, by smoothing the distance function we can take a connected region $\Omega$ containing $p$ such that
\begin{itemize}\setlength{\itemsep}{1mm}
\item $\partial\Omega$ is piecewisely smooth consisting of smooth curves $S':=\overline{\partial\Omega-\Gamma}$ and $\Gamma':=\Gamma\cap \partial\Omega$;
\item $S'$ intersects $\Gamma'$ transversely;
\item $\dist(p,S')>2\pi/\sqrt{\sigma_0}$.
\end{itemize}
Take a small geodesic ball $B_\delta$ centered at $p$ which is disjoint from $\partial\Omega$. Then $\Omega_\delta:=\Omega-B_\delta$ is a Riemannian band with $\partial_-=\partial B_\delta$ and $\partial_+=S'$. Notice that $(\Omega_\delta,\Gamma')$ has $\mathbb T^l$-stabilized scalar curvature $(R,H)$. If $S'$ intersects $\Gamma'$ in acute angles, then we can apply Lemma \ref{Lem: corner mu bubble} to find a curve $\gamma$ such that $(\gamma,\partial\gamma)$ has $\mathbb T^{l+1}$-stabilized scalar curvature $(R',H')$ where
$$
R'>R-\pi^2\left(\frac{\pi}{\sqrt{\sigma_0}}\right)^{-2}\geq 0\mbox{ and }H'=H\geq 0.
$$
This is impossible since by doubling trick {\cite[Theorem 5.7]{GL80a}} one can construct a smooth metric on $\mathbb T^{l+2}$ with positive scalar curvature and this leads to a contradiction.

Generally $S'$ does not intersect $\Gamma'$ in acute angles, so we have to find a deformation of $S'$ which produces acute dihedral angles but does not affect much on the distance between $S'$ and $p$. This can be done as follows. Given any tubular neighborhood $\mathcal N$ of $S'$ it is not difficult to construct a smooth vector field $X$ on $\Omega$ supported in $\mathcal N$ which satisfies
\begin{itemize}\setlength{\itemsep}{1mm}
\item $X$ is tangential to $\Gamma'$;
\item $X$ is transverse to $S'$ and inward-pointing.
\end{itemize}
From this vector field we can construct a diffeomorphism
$$
\Phi:S'\times [0,1] \to \mathcal N,\ \, (q,t)\mapsto \phi_t(q),
$$
where $\phi_t$ is the flow generated by $X$. {With the help of the map $\Phi$ we can consider graphs over $S'$. Namely, given any smooth function $f:S'\to [0,1]$ over $S'$ we can consider
$$
G_f:= \big\{\Phi(q,f(q)) \mid q\in S'\big\}.
$$
Let $f$ be a smooth function on $S'$ with $0\leq f\leq 1$ in $S'$ and
$$
f = c \cdot \dist_{S'}(\cdot,\partial S')  \mbox{ around }\partial S',
$$
where $c$ is a constant to be determined later. Let $\nu_{S'}$ denote the outward pointing unit normal vector field on $S'\subset \bar\Omega$, $\nu$ denote the unit normal vector field on $G_f$ pointing to $S'$, $\vec n$ denote the outward pointing unit normal vector field on $\Gamma'\subset \bar\Omega$, and $\beta$ denote the inward pointing conormal of $\partial S'\subset S'$. From a direct  computation we have
$$
\text{$\nu=\nu_{S'}\cos\theta+\beta\sin\theta$ where $\theta\in\left[0,\frac{\pi}{2}\right)$ with $\tan\theta=c$.}
$$
Note that we have $\beta\cdot \vec n<0$ since $S'$ intersects $\Gamma'$ transversally, $\beta$ is inward pointing and $\vec n$ is outward pointing. By taking the constant $c$ large enough and correspondingly $\theta$ sufficiently close to $\pi/2$, we can guarantee $\nu\cdot \vec n<0$ and this means that $G_f$ intersects $\Gamma'$ in acute angles. Clearly, we can require the $C^0$-norm of $f$ as small as possible and so the distance between $G_f$ and $p$ can be arbitrarily close to that between $S'$ and $p$.
In particular, we can still guarantee $\dist(p,G_f)>2\pi/\sqrt{\sigma_0}$. This completes the proof.}
\end{proof}

\begin{corollary}\label{Cor: diameter}
Let $(\Sigma,\partial\Sigma,g)$ be an orientable surface possibly with non-empty boundary. Assume that $(\Sigma,\partial\Sigma)$ has $\mathbb T^l$-stabilized scalar-mean curvature $(R,H)$ with $R\geq \sigma_0>0$ and $H\geq 0$.
Then $\Sigma$ is a topological sphere or disk and we have $\diam(\Sigma,g)\leq 2\pi/\sqrt{\sigma_0}$.
\end{corollary}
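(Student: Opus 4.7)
My plan is to argue the two claims by contradiction, in each case performing one more dimension reduction to produce positive scalar curvature on $\mathbb{T}^{l+2}$ and appealing to Geroch's theorem for the torus.

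For the topology, the closed case is direct: if $\Sigma$ is closed and not a sphere then $H_{1}(\Sigma)\neq 0$, and Lemma \ref{Lem: homology minimizing} applied to a nontrivial class gives a one-dimensional integer-multiplicity representative $\Sigma'$ (a disjoint union of circles) carrying $\mathbb{T}^{l+1}$-stabilized scalar curvature $R'\geq \sigma_{0}$. Any component $C\subset \Sigma'$ then yields a metric of positive scalar curvature on $C\times\mathbb{T}^{l+1}\cong\mathbb{T}^{l+2}$, contradicting Geroch's theorem. When $\partial\Sigma\neq\emptyset$, I would reduce to the closed case by doubling $\Sigma$ across $\partial\Sigma$: the hypothesis $H\geq 0$ ensures that the doubled Lipschitz metric has nonnegative distributional scalar curvature across the gluing locus, and a standard mollification preserving positive scalar curvature (together with the symmetric extension of the warping functions $v_{i}$) produces $D\Sigma$ closed with $\mathbb{T}^{l}$-stabilized positive scalar curvature, hence a sphere by the closed case. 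Since $D\Sigma\cong S^{2}$ forces $\Sigma$ to be a disk, the topological claim follows.

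For the diameter estimate, I adapt the $\mu$-bubble argument in the proof of Lemma \ref{Lem: inradius} to a pair of points. Assume for contradiction that $d_{g}(p,q)>2\pi/\sqrt{\sigma_{0}}$ for some $p,q\in\Sigma$, which, after a small perturbation, I may take to be interior. Choose disjoint small geodesic disks $B_{\delta}(p),B_{\delta}(q)$ and a constant $d_{0}>\pi/\sqrt{\sigma_{0}}$ with $2d_{0}<\dist_{g}(\partial B_{\delta}(p),\partial B_{\delta}(q))$, and form the Riemannian band $M':=\Sigma\setminus(B_{\delta}(p)\cup B_{\delta}(q))$ with $\partial_{\pm}$ the two small circles and (possibly empty) side $\Gamma=\partial\Sigma$. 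The dihedral-angle condition of Lemma \ref{Lem: corner mu bubble} is vacuous because $\partial_{\pm}$ avoids $\Gamma$, so that lemma (or Lemma \ref{Lem: smooth mu bubble} when $\Gamma=\emptyset$) produces a one-dimensional $\mu$-bubble $\Sigma'$ separating the two disks, with $(\Sigma',\partial\Sigma')$ having $\mathbb{T}^{l+1}$-stabilized scalar-mean curvature $(R',H')$ satisfying $R'\geq \sigma_{0}-\pi^{2}/d_{0}^{2}>0$ and $H'=H\geq 0$. Each component of $\Sigma'$ is either a circle, in which case $C\times\mathbb{T}^{l+1}\cong \mathbb{T}^{l+2}$ inherits positive scalar curvature directly, or an arc meeting $\partial\Sigma$ orthogonally, in which case doubling the slab $A\times \mathbb{T}^{l+1}$ across its two boundary tori and smoothing (again using $H'\geq 0$) gives the same forbidden object. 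This establishes $\diam(\Sigma,g_{\Sigma})\leq 2\pi/\sqrt{\sigma_{0}}$.

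The hardest point will be the doubling-and-smoothing step used in both halves of the argument; this is precisely where the mean-convexity hypothesis $H\geq 0$ is consumed, since it guarantees that the Lipschitz metric produced by doubling has nonnegative distributional scalar curvature across the gluing locus and can therefore be mollified into a genuine positive-scalar-curvature metric on $\mathbb{T}^{l+2}$ (the same ``doubling trick'' already invoked at the end of the proof of Lemma \ref{Lem: inradius}).
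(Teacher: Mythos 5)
Your argument is correct in outline, but it takes a genuinely different (and heavier) route than the paper on both claims. For the topology, the paper does not double or reduce dimension at all: it applies the Gauss--Bonnet inequality for $\mathbb T^l$-stabilized scalar-mean curvature from \cite[formula (2.6)]{GZ21}, namely $0<\int_\Sigma R\,\mathrm d\sigma_g+2\int_{\partial\Sigma}H\,\mathrm ds_g\leq \int_\Sigma R_g\,\mathrm d\sigma_g+2\int_{\partial\Sigma}\kappa_g\,\mathrm ds_g=4\pi\chi(\Sigma)$, so $\chi(\Sigma)>0$ and orientability finishes it in one line; your route via Lemma \ref{Lem: homology minimizing} plus doubling works in principle but concentrates all the difficulty in the doubling-and-smoothing step, where you must also check that the mollified metric on $D\Sigma\times\mathbb T^l$ can be kept in multi-warped-product form (otherwise, for large $l$, you cannot invoke the regularity theory behind Lemma \ref{Lem: homology minimizing} on the $(l+2)$-dimensional double directly) -- the paper's Gauss--Bonnet argument sidesteps all of this. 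For the diameter, the paper simply removes a small geodesic ball $B_\delta$ about an interior point $p_\ve$ near one endpoint of a diameter-realizing pair and applies the already-proved inradius estimate of Lemma \ref{Lem: inradius} to $(\Sigma-B_\delta,\partial(\Sigma-B_\delta),\partial\Sigma)$, giving $\dist(q,\partial B_\delta)\leq 2\pi/\sqrt{\sigma_0}$ and hence the bound after letting $\delta,\ve\to 0$; your two-ball band construction with Lemma \ref{Lem: corner mu bubble} (or Lemma \ref{Lem: smooth mu bubble}) essentially re-proves a two-point version of that inradius lemma, and your quantifiers do check out (strict inequality $\dist(p,q)>2\pi/\sqrt{\sigma_0}$ leaves room to choose $\delta$ small and $d_0>\pi/\sqrt{\sigma_0}$ with $2d_0<\dist(\partial_+,\partial_-)$, and the dihedral-angle hypothesis is indeed vacuous since $\partial_\pm$ is disjoint from $\partial\Sigma$). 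In short: what you wrote is a valid self-contained alternative, but you missed that both conclusions are designed to fall out of tools the paper has already set up (the Gauss--Bonnet inequality and Lemma \ref{Lem: inradius}), which makes the official proof a few lines long and free of the doubling technicalities you correctly identify as the hardest point of your version.
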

\begin{proof}
It follows from \cite[formula (2.6)]{GZ21} that
\[
\begin{split}
0<&\int_{\Sigma} R\,\mathrm d\sigma_g+2\int_{\partial\Sigma}H\,\mathrm ds_g \leq  \int_{\Sigma} R_g\,\mathrm d\sigma_g+2\int_{\partial\Sigma}\kappa_g\,\mathrm ds_g=4\pi\chi(\Sigma).
\end{split}
\]
This implies that $\Sigma$ is a topological sphere or disk.

Next we pick two points $p$ and $q$ with $\dist(p,q)=\diam(\Sigma,g)$. For any small constant $\ve>0$ we can find an interior point $p_\ve$ with $\dist(p_\ve,p)<\ve$. Take a small geodesic ball $B_\delta$ centered at $p_\ve$ which is disjoint from $\partial\Sigma$ and $q$. From Lemma \ref{Lem: inradius} we see
$$r_{\mathrm{in}}(\Sigma-B_\delta, \partial(\Sigma-B_\delta),\partial\Sigma)\leq 2\pi/\sqrt{\sigma_0}.$$ In particular, we have $\dist(q,\partial B_\delta)\leq 2\pi/\sqrt{\sigma_0}$ and so
$$
\dist(p,q)\leq \dist(p_\ve,q)+\ve\leq 2\pi/\sqrt{\sigma_0}+\delta+\ve.
$$
Notice that we can make $\ve$ and $\delta$ arbitrarily small and so we can obtain the desired estimate.
\end{proof}

\section{Proof of Theorem \ref{Thm: main}}\label{Proof of main theorem}
We adopt the same notation as in Section \ref{Preliminaries}. Let $3 \le n \le 7$.

\subsection{Dimension reduction}
Recall that $\tilde{\sigma}$ is a proper line contained in $\tilde{N}_\ve$ constructed in Subsection \ref{subsec: line}.
\begin{lemma}
For any $L>0$ we can find a compact two-sided hypersurface $\tilde M_{n-1}$ with non-empty boundary in $(\tilde Y,\tilde g)$ such that
\begin{itemize}\setlength{\itemsep}{1mm}
\item we have
$$
\dist_{\tilde g}(\partial \tilde M_{n-1},\tilde \sigma)\geq 3L;
$$
\item $\tilde M_{n-1}$ has non-zero algebraic intersection with the line $\tilde \sigma$.
\end{itemize}
\end{lemma}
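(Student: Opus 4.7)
The plan is to realize $\tilde{M}_{n-1}$ as the intersection of a regular level set of a smooth equivariant function $f:\tilde{Y}\to\mathbb{R}$ with a closed tubular neighborhood of $\tilde{\sigma}$ of radius $\ell\geq 3L$. The equivariance $f(Tx)=f(x)+1$, where $T$ is the deck transformation associated to $[\sigma]$, forces $\tilde{\sigma}$ to wind nontrivially through the level sets, while a uniform Lipschitz bound on $f$ together with Lemma~\ref{geodesic}(iii) will force the truncation to be compact.

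\emph{Constructing $f$.} Because $\overline{N}\simeq\mathbb{S}^{1}$, a generator of $H^{1}(\overline{N};\mathbb{Z})$ is represented by a smooth map $\overline{N}\to\mathbb{R}/\mathbb{Z}$ whose restriction to $\overline{\sigma}$ has degree one. Its lift to $\tilde{N}$ gives a smooth $\tilde{f}:\tilde{N}\to\mathbb{R}$ with $\tilde{f}(Tx)=\tilde{f}(x)+1$, so that $\tilde{f}(\tilde{\sigma}(s+1))=\tilde{f}(\tilde{\sigma}(s))+1$ and in particular $\tilde{f}(\tilde{\sigma}(s))\to\pm\infty$ as $s\to\pm\infty$. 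Modify $\tilde{f}$ $\pi_{1}(N)$-equivariantly so that on each $B_{\varepsilon/2}^{\tilde{h}}(p_{i})$ it equals the constant $\tilde{f}(p_{i})$, then extend by that same constant across $X_{\varepsilon,i}$ to obtain a smooth function $f:\tilde{Y}\to\mathbb{R}$. Since $N_{\varepsilon}$ is compact, $g$ and $h$ are uniformly equivalent there, and hence $\tilde{g}$ and $\tilde{h}$ are uniformly equivalent on $\tilde{N}_{\varepsilon}$; combined with the local nature of the puncture modifications and $df\equiv 0$ on each $X_{\varepsilon,i}$, this yields a uniform Lipschitz bound $|df|_{\tilde{g}}\leq K$ on all of $\tilde{Y}$.

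\emph{Construction of $\tilde{M}_{n-1}$.} Pick $r\in\mathbb{R}$ so that $f^{-1}(r)$ is smooth, disjoint from every $X_{\varepsilon,i}$, and transverse to $\tilde{\sigma}$ (possible since the values of $f$ on $\bigsqcup_{i}X_{\varepsilon,i}$ form a countable set and by Sard plus a standard genericity argument); then $f^{-1}(r)\subset\tilde{N}_{\varepsilon}$ is a smooth codimension-one submanifold. Choose $\ell\geq 3L$ so that, after a mollification as in Lemma~\ref{Lem: rho lambda}, the distance sphere $\Sigma_{\ell}:=\{x:\dist_{\tilde{g}}(x,\tilde{\sigma})=\ell\}$ is smooth and transverse to $f^{-1}(r)$, and set
\[
\tilde{M}_{n-1}:=f^{-1}(r)\cap\bigl\{x\in\tilde{Y}:\dist_{\tilde{g}}(x,\tilde{\sigma})\leq\ell\bigr\}.
\]
This is a smooth two-sided hypersurface (two-sidedness coming from the normal direction $\nabla f$), its boundary $f^{-1}(r)\cap\Sigma_{\ell}$ lies at distance $\ell\geq 3L$ from $\tilde{\sigma}$, and its transverse intersections with $\tilde{\sigma}$ correspond bijectively to $(f\circ\tilde{\sigma})^{-1}(r)$, whose signed count equals the degree of the proper map $f\circ\tilde{\sigma}:\mathbb{R}\to\mathbb{R}$, namely one.

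\emph{Compactness, the main obstacle.} For $x\in\tilde{M}_{n-1}$, there exists $s\in\mathbb{R}$ with $\dist_{\tilde{g}}(x,\tilde{\sigma}(s))\leq\ell$, and the Lipschitz bound yields $|r-f(\tilde{\sigma}(s))|\leq K\ell$. Since $f\circ\tilde{\sigma}$ is proper, the set $I_{r}:=(f\circ\tilde{\sigma})^{-1}([r-K\ell,r+K\ell])$ is bounded, hence $\tilde{M}_{n-1}$ is contained in the closed $\ell$-neighborhood of the compact set $\tilde{\sigma}(I_{r})$, and is therefore compact by Hopf--Rinow. Non-emptiness of $\partial\tilde{M}_{n-1}$ reduces to non-compactness of $f^{-1}(r)$: its projection to $\overline{N}$ represents a generator of $H_{n-1}^{\mathrm{lf}}(\overline{N};\mathbb{Z})\cong H^{1}(\overline{N};\mathbb{Z})$ by Poincaré--Lefschetz duality and cannot be a compact cycle since $H_{n-1}(\overline{N})=H_{n-1}(\mathbb{S}^{1})=0$ for $n\geq 3$. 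The main technical hurdle throughout is obtaining and maintaining the uniform Lipschitz bound on $f$ across the puncture modifications; once that is in hand the rest is essentially immediate.
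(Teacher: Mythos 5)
Your construction has a genuine gap that cannot be repaired: the uniform Lipschitz bound $|df|_{\tilde g}\leq K$ for your $T$-equivariant function $f$ need not exist, and in fact fails whenever $[\sigma]$ is a \emph{distorted} element of $\pi_1(N)$. The issue is already present on $\tilde N$ before any puncture modification. Your $\tilde f$ satisfies $\tilde f(T^k x)=\tilde f(x)+k$, so a Lipschitz bound $|d\tilde f|_{\tilde h}\leq K$ forces $|k|\leq K\,\dist_{\tilde h}(x,T^k x)$ for all $k$ and $x$; that is, $T$ must have \emph{linear} displacement growth. The paper only invokes Lemma~\ref{geodesic}(iii), which gives unbounded displacement but says nothing about the rate. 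Concretely, take $N$ to be the Heisenberg $3$-nilmanifold $\Gamma\backslash H$ (closed, aspherical) and $[\sigma]$ a generator of the center of $\Gamma$. Then $\dist_{\tilde h}(x,T^k x)\sim\sqrt{|k|}$, so $|k|\leq K\sqrt{|k|}$ fails for large $k$; no equivariant $\tilde f$ with bounded $|d\tilde f|_{\tilde h}$ exists. Equivalently, the class you want in $H^1(\overline N;\mathbb Z)$ has no bounded de~Rham representative with respect to the lifted metric $\overline h$, precisely because $\overline N$ is non-compact and $\langle[\sigma]\rangle$ is distorted. Since the Lipschitz bound is the sole input both for compactness of $\tilde M_{n-1}$ (via boundedness of $I_r$) and for the distance estimate linking $f$-values along $\tilde\sigma$ to spatial distance, the rest of the argument collapses.

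The paper sidesteps this entirely by working with (mollified) distance functions $\rho_1(\cdot)\approx\dist_{\tilde g}(\cdot,\tilde\sigma([0,\infty)))$ and $\rho_2(\cdot)\approx\dist_{\tilde g}(\cdot,\tilde\sigma(0))$, which are automatically $1$-Lipschitz regardless of distortion, and takes $\tilde M_{n-1}=(\partial U)\cap\rho_2^{-1}((-\infty,L_2))$ with $U=\rho_1^{-1}((-\infty,L_1])$. Non-zero algebraic intersection there comes from the fact that $\tilde\sigma|_{[0,\infty)}$ lies in $U$ while $\tilde\sigma|_{(-\infty,-K)}$ eventually leaves $U$ (using only Lemma~\ref{geodesic}(iii), not any linear rate), not from equivariance. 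If you want to retain a cohomological flavor you would have to restrict to the case where $[\sigma]$ is undistorted; but the general statement requires the distance-function route.
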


\begin{proof}
We follow the argument of Chodosh--Li \cite[Section 2]{CL20}. Fix two smooth functions $\rho_{1},\rho_{2}:\ti{Y}\rightarrow\mathbb{R}$ satisfying
\[
|\rho_{1}(\cdot)-\dist_{\ti{g}}(\cdot,\ti{\sigma}([0,+\infty)))| \leq 1, \quad
|\rho_{2}(\cdot)-\dist_{\ti{g}}(\cdot,\ti{\sigma}(0))| \leq 1.
\]
{Here $|\cdot|$ denotes the $C^{0}$-norm.} Let $L_{1}\geq 3L+3$ be a large regular value of $\rho_{1}$ and define $U:=\rho_{1}^{-1}((-\infty,L_{1}])$. The construction can be divided into four steps.

\medskip
\noindent
{\bf Step 1.} For $L_{2}\gg1$, $\ti{\sigma}\cap\partial U\subset\rho_{2}^{-1}((-\infty,L_{2}])$.
\medskip

By the definition of $U$, it is clear that $\ti{\sigma}([0,+\infty))\subset U$. We claim that for $K\gg1$,
\begin{equation}\label{K U emptyset}
\ti{\sigma}((-\infty,-K)) \cap U = \emptyset.
\end{equation}
Given this claim, $\ti{\sigma}\cap\partial U$ is contained in the compact set $\ti{\sigma}([-K,0])$, which implies the existence of $L_{2}$. To prove \eqref{K U emptyset}, we argue by contradiction. Suppose that $\ti{\sigma}(t_{1})\in U$ for some $t_{1}<-K$. Then
\[
\dist_{\ti{g}}(\ti{\sigma}(t_{1}),\ti{\sigma}([0,+\infty))) \leq \rho_{1}(\tilde\sigma(t_{1}))+1 \leq L_{1}+1,
\]
and so there exists $t_{2}\geq0$ such that
\[
\dist_{\ti{g}}(\ti{\sigma}(t_{1}),\ti{\sigma}(t_{2})) \leq L_{1}+1.
\]
Since $|t_{1}-t_{2}|=t_{2}-t_{1} \geq K$. Using Lemma \ref{geodesic} (iii), when $K\gg1$, we have $\dist_{\ti{g}}(\ti{\sigma}(t_{1}),\ti{\sigma}(t_{2})) \geq L_{1}+2$, which is a contradiction.

\medskip
\noindent
{\bf Step 2.} Construct $\ti{M}_{n-1}$.
\medskip

Let $L_{2}\gg L_{1}$ be a regular value of $\rho_{2}|_{\partial U}$ and define
\[
\ti{M}_{n-1} := (\partial U) \cap \rho_{2}^{-1}((-\infty,L_{2})).
\]
Perturb $\ti{\sigma}$ slightly such that it intersects $\ti{M}_{n-1}$ transversely, $\ti{\sigma}\cap\partial U=\ti{\sigma}\cap\ti{M}_{n-1}$ and the following inequalities hold:
\[
|\rho_{1}(\cdot)-\dist_{\ti{g}}(\cdot,\ti{\sigma}([0,+\infty)))| \leq 2, \quad
|\rho_{2}(\cdot)-\dist_{\ti{g}}(\cdot,\ti{\sigma}(0))| \leq 2.
\]

\begin{figure}[htbp]
\centering
\includegraphics[width=\linewidth]{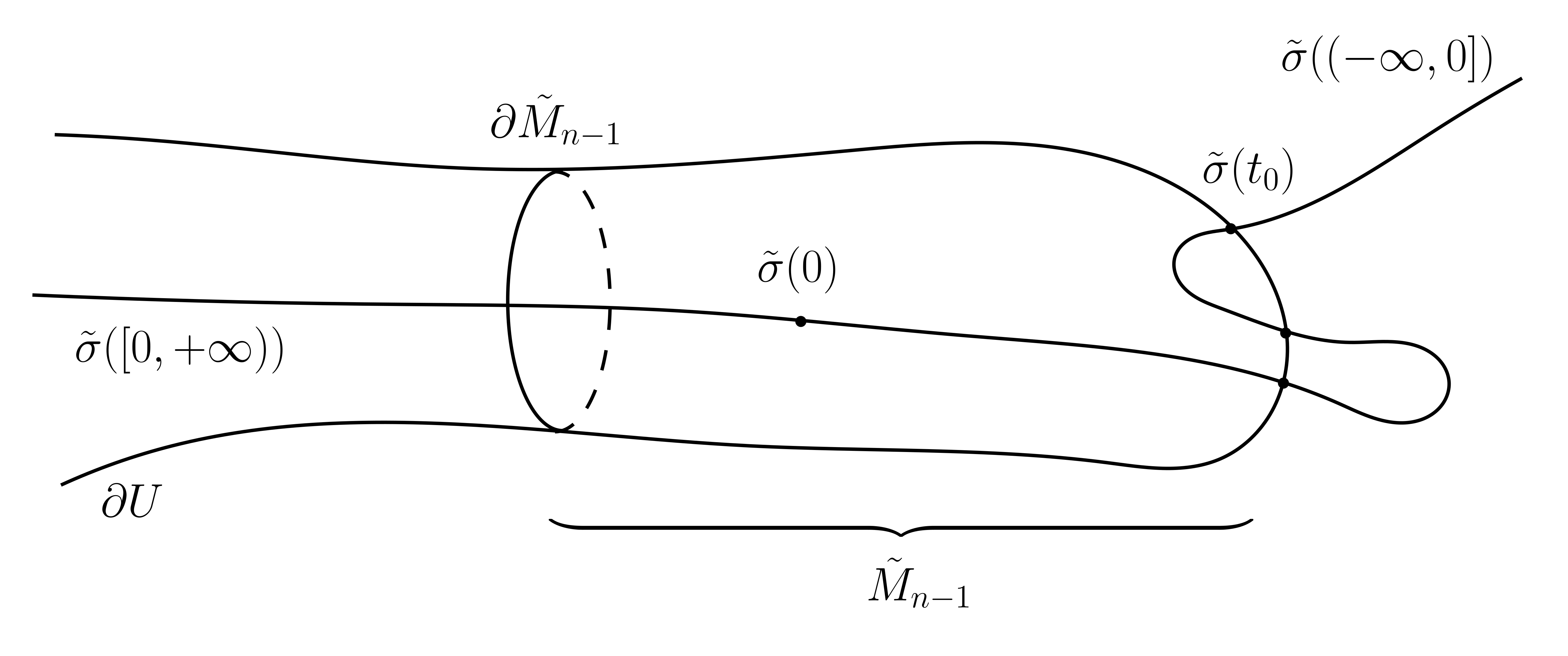}
\caption{The hypersurface $\ti{M}_{n-1}$}
\label{construction_of_M}
\end{figure}

\medskip
\noindent
{\bf Step 3.} For $L_{2}\gg1$, the curve $\ti{\sigma}$ has non-zero algebraic intersection with $\ti{M}_{n-1}$ and $\partial\ti{M}_{n-1}\neq\emptyset$.
\medskip

As shown in Figure \ref{construction_of_M}, let $t_{0}$ be the smallest intersection time of $\ti{\sigma}$ and $\partial U$, i.e.
\[
t_{0} := \min\{t\in\mathbb{R}\mid\ti{\sigma}(t)\in\partial U \}.
\]
Then $\ti{\sigma}|_{(-\infty,t_{0})}$ does not intersect $\partial U$, and $\ti{\sigma}|_{(t_{0},+\infty)}$ leaves and re-enters $U$ in oppositely oriented pairs. Together with $\ti{\sigma}\cap\partial U=\ti{\sigma}\cap\ti{M}_{n-1}$, the curve $\ti{\sigma}$ has non-zero algebraic intersection with $\ti{M}_{n-1}$. If $\partial\ti{M}_{n-1}=\emptyset$, the above shows
\[
0 \neq [\ti{M}_{n-1}] \in H_{n-1}(\ti{Y},\ti{X}_\ve),
\]
contradicting $H_{n-1}(\ti{Y},\ti{X}_\ve) = 0$ by Lemma \ref{Lem: excision}. Then we obtain $\partial\ti{M}_{n-1}\neq\emptyset$.

\medskip
\noindent
{\bf Step 4.} For $L_{2}\gg L_{1}$, $\dist_{\ti{g}}(\partial\ti{M}_{n-1},\ti{\sigma})\geq 3L$.
\medskip

Suppose that $\dist_{\ti{g}}(\partial\ti{M}_{n-1},\ti{\sigma})$ is achieved at $p\in\partial\ti{M}_{n-1}$ and $t_{2}\in\mathbb{R}$, i.e.
\[
\dist_{\ti{g}}(p,\ti{\sigma}(t_{2})) = \dist_{\ti{g}}(\partial\ti{M}_{n-1},\ti{\sigma}).
\]
Recalling $L_{1}\geq 3L+3$, it suffices to show that for $L_{2}\gg L_{1}$,
\[
\dist_{\ti{g}}(p,\ti{\sigma}(t_{2}))\geq L_{1}-3.
\]
We argue by contradiction. Suppose that $\dist_{\ti{g}}(p,\ti{\sigma}(t_{2}))<L_{1}-3$. If $t_{2}\geq0$, then
\[
L_{1}-3 > \dist_{\ti{g}}(p,\ti{\sigma}(t_{2}))
\geq \dist_{\ti{g}}(p,\ti{\sigma}([0,+\infty)))
\geq \rho_{1}(p)-2 = L_{1}-2,
\]
which is a contradiction. If $t_{2}<0$, by Lemma \ref{geodesic} (ii), we have
\begin{equation}\label{A t 2}
\begin{split}
A|t_{2}| \geq {} & \dist_{\ti{g}}(\ti{\sigma}(0),\ti{\sigma}(t_{2}))
\geq \dist_{\ti{g}}(p,\ti{\sigma}(0))-\dist_{\ti{g}}(p,\ti{\sigma}(t_{2})) \\
> {} & (\rho_{2}(p)-2)-(L_{1}-3) = L_{2}-L_{1}+1.
\end{split}
\end{equation}
Let $t_{1}\geq0$ be the number such that
\[
\dist_{\ti{g}}(p,\ti{\sigma}(t_{1})) = \dist_{\ti{g}}(p,\ti{\sigma}([0,+\infty))).
\]
Then
\begin{equation}\label{d p sigma t 2}
\begin{split}
\dist_{\ti{g}}(p,\ti{\sigma}(t_{2}))
\geq {} & \dist_{\ti{g}}(\ti{\sigma}(t_{1}),\ti{\sigma}(t_{2}))-\dist_{\ti{g}}(p,\ti{\sigma}(t_{1})) \\
= {} & \dist_{\ti{g}}(\ti{\sigma}(t_{1}),\ti{\sigma}(t_{2}))-\dist_{\ti{g}}(p,\ti{\sigma}([0,+\infty))) \\
\geq {} & \dist_{\ti{g}}(\ti{\sigma}(t_{1}),\ti{\sigma}(t_{2}))-\rho_{1}(p)-2 \\
= {} & \dist_{\ti{g}}(\ti{\sigma}(t_{1}),\ti{\sigma}(t_{2}))-L_{1}-2.
\end{split}
\end{equation}
Using \eqref{A t 2}, $t_{1}\geq0$ and $t_{2}<0$,
\[
|t_{1}-t_{2}| \geq |t_{2}| \geq A^{-1}(L_{2}-L_{1}+1).
\]
Combining this with Lemma \ref{geodesic} (iii), for $L_{2}\gg L_{1}$, we have $\dist_{\ti{g}}(\ti{\sigma}(t_{1}),\ti{\sigma}(t_{2}))\geq 2L_{1}$. Together with \eqref{d p sigma t 2}, we obtain $\dist_{\ti{g}}(p,\ti{\sigma}(t_{2}))\geq L_{1}-2$, which contradicts $\dist_{\ti{g}}(p,\ti{\sigma}(t_{2}))<L_{1}-3$.
\end{proof}

\begin{lemma}\label{Lem: Mn-1}
We can find a smoothly embedded, complete hypersurface $M_{n-1}$ in $(\tilde Y,\tilde g)$ with $\partial M_{n-1}=\partial\tilde M_{n-1}$ such that
\begin{itemize}\setlength{\itemsep}{1mm}
\item $M_{n-1}\cap \tilde N_\ve$ is compact;
\item $\mathring M_{n-1}$ with the induced metric has $\mathbb T^1$-stabilized scalar curvature $R_{n-1}$ which is no less than $R(\tilde g)|_{M_{n-1}}$;
\item Every component of $M_{n-1}$ has nonempty boundary.
\end{itemize}
\end{lemma}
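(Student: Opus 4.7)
The plan is to fill the codimension-two submanifold $S := \partial \tilde M_{n-1}$ by an area-minimizing hypersurface via Lemma \ref{Lem: plateau}. The ambient $(\tilde Y,\tilde g)$ is complete and trivially carries $\mathbb T^0$-stablized scalar curvature $R(\tilde g)$; the homological hypothesis $[S]=0\in H_{n-2}(\tilde Y)$ is met because $\tilde M_{n-1}$ itself is an $(n-1)$-chain in $\tilde Y$ whose boundary is $S$. With $3\le n\le 7$, applying Lemma \ref{Lem: plateau} with $l=0$ therefore produces a properly embedded, complete hypersurface $\Sigma\subset \tilde Y$ with $\partial\Sigma=S$, whose interior $\mathring\Sigma$ carries $\mathbb T^1$-stablized scalar curvature $R_{n-1}\ge R(\tilde g)|_\Sigma$. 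Setting $M_{n-1}:=\Sigma$ settles both the boundary identity and the scalar-curvature assertion at once.

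It remains to establish compactness of $M_{n-1}\cap \tilde N_\ve$. Since $M_{n-1}$ is constructed as a mass-minimizer (in the conformal metric of \eqref{Eq: conformal tilde g}, which for $l=0$ coincides with $\tilde g$) of the Plateau problem with boundary $S$, and since the compact hypersurface $\tilde M_{n-1}$ is itself a competitor of finite area, the minimizer $M_{n-1}$ has finite total $(n-1)$-Hausdorff measure. Suppose for contradiction that $M_{n-1}\cap \tilde N_\ve$ is non-compact. Since $M_{n-1}$ is properly embedded in $\tilde Y$, any divergent sequence $\{q_k\}\subset M_{n-1}\cap \tilde N_\ve$ must escape to infinity within $\tilde N_\ve$ itself. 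The $\pi_1(N)$-action on $\tilde N_\ve$ is cocompact with compact fundamental domain $K$, so we may choose deck transformations $\gamma_k\in\pi_1(N)$ with $\gamma_k\cdot q_k\in K$; if the $\gamma_k$ were not eventually distinct, then $\{q_k\}$ would remain in a compact set, contradicting divergence.

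Fix $r>0$ small enough that the $\tilde g$-injectivity radius at every point of $K$ exceeds $r$ and that $\tilde g$ has uniformly bounded sectional curvature on the $r$-neighborhood of $K$; both conditions are available because $\tilde g$ descends near $K$ from a smooth metric on the compact manifold $N_\ve$. After passing to a subsequence, the balls $B_r^{\tilde g}(q_k)$ lie in pairwise disjoint deck translates of a fixed neighborhood of $K$, and they sit at uniformly positive distance from the fixed compact set $\partial M_{n-1}$. The monotonicity formula for the stationary hypersurface $M_{n-1}$ in this bounded-geometry regime yields a uniform lower bound $\mathcal H^{n-1}_{\tilde g}(M_{n-1}\cap B_r^{\tilde g}(q_k))\ge c>0$, so summing over $k$ contradicts the finite area of $M_{n-1}$. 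The most delicate step is this final monotonicity argument, which depends on upgrading local bounded geometry coming from the cocompact $\pi_1(N)$-action on $\tilde N_\ve$ into a global area contradiction.
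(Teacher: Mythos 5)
Your proposal is correct and follows essentially the same route as the paper: apply Lemma \ref{Lem: plateau} with $l=0$ to the null-homologous boundary $\partial\tilde M_{n-1}$, then deduce compactness of $M_{n-1}\cap\tilde N_\ve$ from the finite area of the minimizer together with the bounded geometry of $\tilde N_\ve$ (as a covering of the compact $(N_\ve,g)$) and the monotonicity formula. Your write-up simply spells out the deck-transformation and disjoint-ball details that the paper leaves implicit.
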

\begin{proof}
From Lemma \ref{Lem: plateau} we can find the desired hypersurface $M_{n-1}$, where the only thing we need to verify is that $M_{n-1}\cap \tilde N_\ve$ is compact. Recall from the proof of Lemma \ref{Lem: plateau} {(when $l=0$)} that $M_{n-1}$ has finite area {in $(\ti{Y},\ti{g})$}. Since $(\tilde N_\ve,\ti{g})$ is the Riemannian covering of the compact Riemannian manifold $(N_\ve,g)$, it has bounded geometry.  So we can conclude the compactness of $M_{n-1}\cap\tilde N_\ve$ from the monotonicity formula.
\end{proof}

\begin{lemma}\label{Lem: Mn-2}
Given any positive constant $\mu_{\mathrm{loss}}$, there is a universal constant $L_0=L_0(\mu_{\mathrm{loss}})$ such that for any constant $L> L_0$, we can find a non-empty closed hypersurface $M_{n-2}\subset M_{n-1}$ such that
\begin{itemize}\setlength{\itemsep}{1mm}
\item $M_{n-2}$ and $\partial M_{n-1}$ enclose a bounded region $\Omega_{n-1}$ in $M_{n-1}$ which is disjoint from $\tilde\sigma$;
\item
$
\dist_{\tilde g}(M_{n-2},\tilde\sigma)\geq L
$;
\item $M_{n-2}$ has $\mathbb T^2$-stabilized scalar curvature $R_{n-2}$ with
$$R_{n-2}\geq R(\tilde g)|_{M_{n-2}}-\mu_{\mathrm{loss}}.$$
\end{itemize}
\end{lemma}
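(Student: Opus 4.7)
Plan: We construct $M_{n-2}$ as a weighted $\mu$-bubble inside the hypersurface $M_{n-1}$ produced by Lemma \ref{Lem: Mn-1}. Set $d_0 := \pi/\sqrt{\mu_{\mathrm{loss}}}$, so that the curvature loss $\pi^2/d_0^2$ delivered by Lemma \ref{Lem: smooth mu bubble} is exactly $\mu_{\mathrm{loss}}$, and take $L_0 := 2d_0 + 2$. For any $L > L_0$, the goal is to set up a Riemannian band $(M', \partial_\pm)$ inside $M_{n-1}$ with $\partial_- = \partial M_{n-1}$ and $\partial_+$ a hypersurface at ambient distance approximately $L + 2d_0$ from $\tilde\sigma$, then invoke Lemma \ref{Lem: smooth mu bubble}.

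First, since the ambient distance $p \mapsto \dist_{\tilde g}(p, \tilde\sigma)$ is $1$-Lipschitz in the induced metric on $M_{n-1}$ (intrinsic distance dominates extrinsic), a standard mollification of the type used in Lemma \ref{Lem: rho lambda} yields a smooth function $f : M_{n-1} \to [0,\infty)$ with $|f(p) - \dist_{\tilde g}(p,\tilde\sigma)| < 1/2$ and $|\nabla^{M_{n-1}} f|_{\tilde g} < 1$. Pick a regular value $s \in (L + 2d_0, L + 2d_0 + 1)$, and define $\partial_+ := f^{-1}(s)$. Because $f \geq 3L - 1/2 > s$ on $\partial M_{n-1}$ and $f \leq 1/2 < s$ on $\tilde\sigma \cap M_{n-1}$, this level set separates $\partial M_{n-1}$ from the finitely many points of $\tilde\sigma \cap M_{n-1}$.

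Second, take $M'$ to be the connected components of $f^{-1}([s,\infty)) \cap M_{n-1}$ that contain $\partial M_{n-1}$. Viewed as a Riemannian band with $\partial_- = \partial M_{n-1}$ and $\partial_+ = \partial M' \setminus \partial_-$, the width satisfies $\dist_{M'}(\partial_+,\partial_-) \geq (3L - 1/2) - (L + 2d_0 + 1) > 2d_0$. Applying Lemma \ref{Lem: smooth mu bubble} to $(M', \partial_\pm)$ and using the $\mathbb T^1$-stabilized scalar curvature structure on $M_{n-1}$ from Lemma \ref{Lem: Mn-1}, we obtain a smooth hypersurface $\Sigma \subset \mathring{M'}$ bounding with $\partial_-$ a region $\Omega_{n-1} \subset M'$; moreover $\Sigma$ carries $\mathbb T^2$-stabilized scalar curvature $R_{n-2} \geq R_{n-1}|_\Sigma - \pi^2/d_0^2 \geq R(\tilde g)|_\Sigma - \mu_{\mathrm{loss}}$. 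Setting $M_{n-2} := \Sigma$, the containment $\Omega_{n-1} \subset M' \subset \{f \geq s\}$ yields $\dist_{\tilde g}(\Omega_{n-1}, \tilde\sigma) > s - 1/2 > L$, giving both the distance bound $\dist_{\tilde g}(M_{n-2}, \tilde\sigma) \geq L$ and the disjointness of $\Omega_{n-1}$ from $\tilde\sigma$.

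The main technical obstacle is ensuring that the components of $f^{-1}([s,\infty))$ meeting $\partial M_{n-1}$ are compact; in principle, $M_{n-1}$ can have ends extending into the $\tilde X_{\ve,i}$ part of $\tilde Y$ at bounded ambient distance from $\tilde\sigma$, and these could make $M'$ non-compact. The fix is to pass to the compact components, or, in the non-compact case, introduce a cut-off $\Gamma$ far out in each $\tilde X_{\ve,i}$ end (with acute dihedral angles to $\partial_\pm$) and replace Lemma \ref{Lem: smooth mu bubble} by its corner version Lemma \ref{Lem: corner mu bubble}, so that the resulting $M_{n-2}$ may have boundary on these auxiliary hypersurfaces; the scalar curvature and distance-to-$\tilde\sigma$ estimates are unaffected. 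The clause "possibly empty" in the statement further allows taking $M_{n-2} = \emptyset$ when $\partial M_{n-1}$ already bounds a bounded region of $M_{n-1}$ by itself.
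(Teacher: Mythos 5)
Your overall strategy---extract a Riemannian band inside $M_{n-1}$ with $\partial_-=\partial M_{n-1}$ and apply Lemma~\ref{Lem: smooth mu bubble} with $d_0=\pi/\sqrt{\mu_{\mathrm{loss}}}$---is the same as the paper's, and the width and curvature bookkeeping is fine. The difference, and the source of the gap, is how the band is cut. You cut along a level set of (an approximation of) $\dist_{\tilde g}(\cdot,\tilde\sigma)$, so your band $M'=\{f\geq s\}\cap M_{n-1}$ necessarily contains everything in $M_{n-1}$ that is far from $\tilde\sigma$ --- in particular the possibly non-compact pieces of $M_{n-1}$ running out the ends $\tilde X_{\ve,i}$, along which $f\to\infty$. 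This is not a fringe case you can wave away: Lemma~\ref{Lem: smooth mu bubble} (via \cite[Proposition 2.1]{Zhu21}) requires a \emph{compact} band, and ``pass to the compact components'' does not help because the single component of $\{f\geq s\}\cap M_{n-1}$ containing $\partial M_{n-1}$ may itself be non-compact. Your fallback --- truncating each end by an auxiliary hypersurface $\Gamma$ and using the corner version Lemma~\ref{Lem: corner mu bubble} --- produces an $M_{n-2}$ with $\partial M_{n-2}\subset\Gamma$, which is not what the lemma asserts: the region enclosed by $M_{n-2}$ and $\partial M_{n-1}$ then has extra boundary on $\Gamma$, and, more seriously, every later use of $M_{n-2}$ (Lemmas~\ref{Lem: 3D}, \ref{Lem: 4D} and Step 1 of Proposition~\ref{Prop: slice and dice}) treats its components as \emph{closed} so that Proposition~\ref{Prop: closed} and the homological slicing apply. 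So the proposal as written does not close.

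The paper avoids the issue entirely by cutting along level sets of $\dist_{\tilde g}(\cdot,\partial M_{n-1})$ instead: the band is $V=M_{n-1}\cap\rho^{-1}([c_1,c_2])$ with $c_1\in(1,2)$, $c_2\in(L_0-2,L_0-1)$ and $L_0=2\pi/\sqrt{\mu_{\mathrm{loss}}}+7$. Since $\partial M_{n-1}$ is compact and $M_{n-1}$ is properly embedded, $M_{n-1}\cap \overline{B}_{L_0}(\partial M_{n-1})$ is compact by Hopf--Rinow, so $V$ is a genuine compact band and Lemma~\ref{Lem: smooth mu bubble} applies directly; the resulting $M_{n-2}$ is closed and the enclosed region $\Omega_{n-1}=\Omega\cup(M_{n-1}\cap\{\rho\leq 2\})$ is bounded. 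The separation from $\tilde\sigma$ then comes for free: everything stays within distance $L_0+1$ of $\partial M_{n-1}$, while $\dist_{\tilde g}(\partial M_{n-1},\tilde\sigma)\geq 3L\gg L_0$. If you want to salvage your version, replace your function $f$ by the distance to $\partial M_{n-1}$ and keep the band in a \emph{bounded} neighborhood of $\partial M_{n-1}$; measuring from $\tilde\sigma$ is what drags the ends into the picture.
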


\begin{proof}
Set
$$
L_0=\frac{2\pi}{\sqrt{\mu_{\mathrm{loss}}}}+7.
$$
We claim that $M_{n-1}$ is not contained in the $L_0$-neighborhood of $\partial M_{n-1}$. Otherwise, using the fact
$
\dist_{\tilde g}(\partial M_{n-1},\tilde \sigma)\geq 3L$,
we see that $$\dist_{\tilde g}(M_{n-1},\tilde \sigma)>0$$ when $L>L_0$, which means that $M_{n-1}$ is a compact hypersurface bounding $\partial M_{n-1}$ and is disjoint from $\tilde \sigma$. However, we know from Lemma \ref{Lem: excision} that $M_{n-1}$ and $\tilde M_{n-1}$ should have the same algebraic intersection number with $\sigma$, which is nonzero. This is impossible.

So we just need to handle the case when $M_{n-1}$ does not lie entirely in the $L_0$-neighborhood of $\partial M_{n-1}$. Let $\rho:\tilde Y\to \mathbb R$ be a smooth function such that
$$
|\rho(\cdot)-\dist_{\tilde g}(\cdot,\partial M_{n-1})|\leq 1.
$$
Fix two regular values $c_1\in(1,2)$ and $c_2\in(L_0-2,L_0-1)$ of the function $\rho|_{M_{n-1}}$. For convenience, we use $\rho$ to denote $\rho|_{M_{n-1}}$ in the following argument. From our assumption the image of $\rho$ contains the interval $[1,L_0-1]$ and so
$$
M_{n-1}\cap  \rho^{-1}([c_1,c_2])
$$
is a smooth Riemannian {manifold with boundary. For each component $V$, we set $\partial_-=\partial V\cap\rho^{-1}(c_1)$ and $\partial_+=\partial V\cap\rho^{-1}(c_2)$. Define
\[
\mathcal{S}_{1} = \{V \mid \partial_-\neq\emptyset, \ \partial_+\neq\emptyset\}, \ \
\mathcal{S}_{2} = \{V \mid \partial_-\neq\emptyset, \ \partial_+=\emptyset \}.
\]
For $V\in\mathcal{S}_{1}$, $(V,\partial_{+},\partial_{-})$ is a Riemannian band.} We claim
$$
\dist_{\tilde g}(\partial_+,\partial_-)>\frac{2\pi}{\sqrt{\mu_{\mathrm{loss}}}}.
$$
Otherwise, there are two points $p\in \partial_+$ and $q\in \partial_-$ satisfying $\dist_{\tilde g}(p,q)\leq 2\pi/\sqrt{\mu_{\mathrm{loss}}}$. But on the other hand we have
$$\dist_{\tilde g}(p,\partial M_{n-1})>\frac{2\pi}{\sqrt{\mu_{\mathrm{loss}}}}+4 \, \mbox{ and }  \dist_{\tilde g}(q,\partial M_{n-1})<3,$$
which contradicts the triangle inequality. Since the intrinsic distance is greater than the extrinsic distance, we obtain $\dist_V(\partial_+,\partial_-)>2\pi/\sqrt{\mu_{\mathrm{loss}}}$. From Lemma \ref{Lem: smooth mu bubble} we can construct a hypersurface $M_{n-2}^{V}$ bounding a region $\Omega_{V}$ with $\partial_-$. Then $M_{n-2}^{V}$ has $\mathbb T^2$-stabilized scalar curvature $R_{n-2}$ no less than
$$
R(\tilde g)|_{M_{n-2}^{V}}-\mu_{\mathrm{loss}}.
$$
{For $V\in\mathcal{S}_{2}$, we use $\Omega_{V}'$ to denote the interior of $V$. Set}
\[
M_{n-2} = \bigcup_{V\in\mathcal{S}_{1}}M_{n-2}^{V}, \ \
\Omega = \left(\bigcup_{V\in\mathcal{S}_{1}}\Omega_{V}\right)\cup\left(\bigcup_{V\in\mathcal{S}_{2}}\Omega_{V}'\right).
\]
Clearly we see that $M_{n-2}$ and $\partial M_{n-1}$ bounds the region
$$
\Omega_{n-1}=\Omega\cup(M_{n-1}\cap {\{\rho\leq c_{1}\}}).
$$
Both $\Omega_{n-1}$ and $M_{n-2}$ are contained in  the $L_0$-neighborhood of $M_{n-1}$. The first two properties required in this proposition follow from the facts $L>L_0$ and $\dist_{\tilde g}(\partial M_{n-1},\tilde \sigma)\geq 3L$.
\end{proof}

\subsection{Avoidance of two-ends touching}\label{Avoidance of two-ends touching}
As explained in Section \ref{Introduction} (see explanation after Theorem \ref{Thm: mapping rigidity}), we hope that the stable minimal surfaces $\Sigma$ or stable $\mu$-bubbles with boundary $(\Sigma,\partial\Sigma)$ can only lie in at most one end. To show this, we introduce some specific hypersurfaces in each end. Since we hope that $\Sigma$ and $(\Sigma,\partial\Sigma)$ avoid almost all such hypersurfaces, we refer to them as avoidance hypersurfaces.

\subsubsection{Definition of avoidance hypersurfaces}\label{Definition of avoidance hypersurfaces}
Denote
$$\underline R_1=\min_{Y_{\Lambda_1}} R(g).$$
From Lemma \ref{Lem: weight function 1} (with the choice $\nu_0=\underline R_1$ and $d_0=d_{\Lambda_1}$) we can find a positive constant $\delta_1$, some constant $T_1>d_{\Lambda_1}$, and a smooth function $h_1:(-T_1,T_1)\to \mathbb R$ such that
\begin{itemize}\setlength{\itemsep}{1mm}
\item $\lim_{t\to\pm T_1}h_1(t)=\mp\infty$ and $h_1'<0$;
\item $h_1^2-2|h_1'|+\underline R_1\chi_{[-d_{\Lambda_1},d_{\Lambda_1}]}\geq \delta_1$.
\end{itemize}
By smoothing out a level set of the distance function $\dist_g(\cdot, S^h_{\ve}(p))$, we can take a smooth closed hypersurface $\Lambda_2 \subset X_{\Lambda_1}$ homologous to $\Lambda_1$ such that
\begin{equation}\label{construction Lambda 2}
X_{\Lambda_2}\subset \rho_{\Lambda_1}^{-1}((T_1,+\infty)),
\end{equation}
where $\rho_{\Lambda_1}$ is the function from Lemma \ref{Lem: rho lambda}.

Let $\rho_{\Lambda_2}$ be the function from Lemma \ref{Lem: rho lambda} and fix an arbitrary positive constant $d_{\Lambda_2}$. Similarly, we can take a smooth closed hypersurface $\Lambda_3 \subset X_{\Lambda_2}$ homologous to $\Lambda_2$ such that
$$
X_{\Lambda_3}\subset \rho_{\Lambda_2}^{-1}((d_{\Lambda_2},+\infty)).
$$
Denote
$$
\underline R_2=\min_{Y_{\Lambda_3}} R(g).
$$
It follows from Lemma \ref{Lem: weight function 2} (with the choice $\nu_0=\underline R_2$ and $d_0=d_{\Lambda_2}$) that we can find a positive constant $\delta_2$, some constant $T_2>d_{\Lambda_2}$, and a smooth function $h_2:[0,T_2)\to (-\infty,0]$ such that
\begin{itemize}\setlength{\itemsep}{1mm}
\item $h_2\equiv 0$ around $t=0$, $\lim_{t\to T_2}h_2(t)=-\infty$ and $h_2'\leq 0$;
\item $h_2^2-2|h_2'|+\underline R_2\chi_{[0,d_{\Lambda_2}]}\geq \delta_2$.
\end{itemize}
By smoothing out a level set of the distance function $\dist_g(\cdot, S^h_{\ve}(p))$, we can take a smooth closed hypersurface $\Lambda_4 \subset X_{\Lambda_3}$ homologous to $\Lambda_3$ such that
\begin{equation}\label{construction Lambda 4}
X_{\Lambda_4}\subset \rho_{\Lambda_2}^{-1}((T_2,+\infty)).
\end{equation}
See Figure \ref{avoidance_hypersurface} for our setting of all auxiliary hypersurfaces $\Lambda_i$.

In the following, we will let either the hypersurfaces $\tilde{\Lambda}_{2,i}$ or the hypersurfaces $\tilde{\Lambda}_{4,i}$ be the avoidance hypersurfaces.

\begin{figure}[htbp]
\centering
\includegraphics[width=\linewidth]{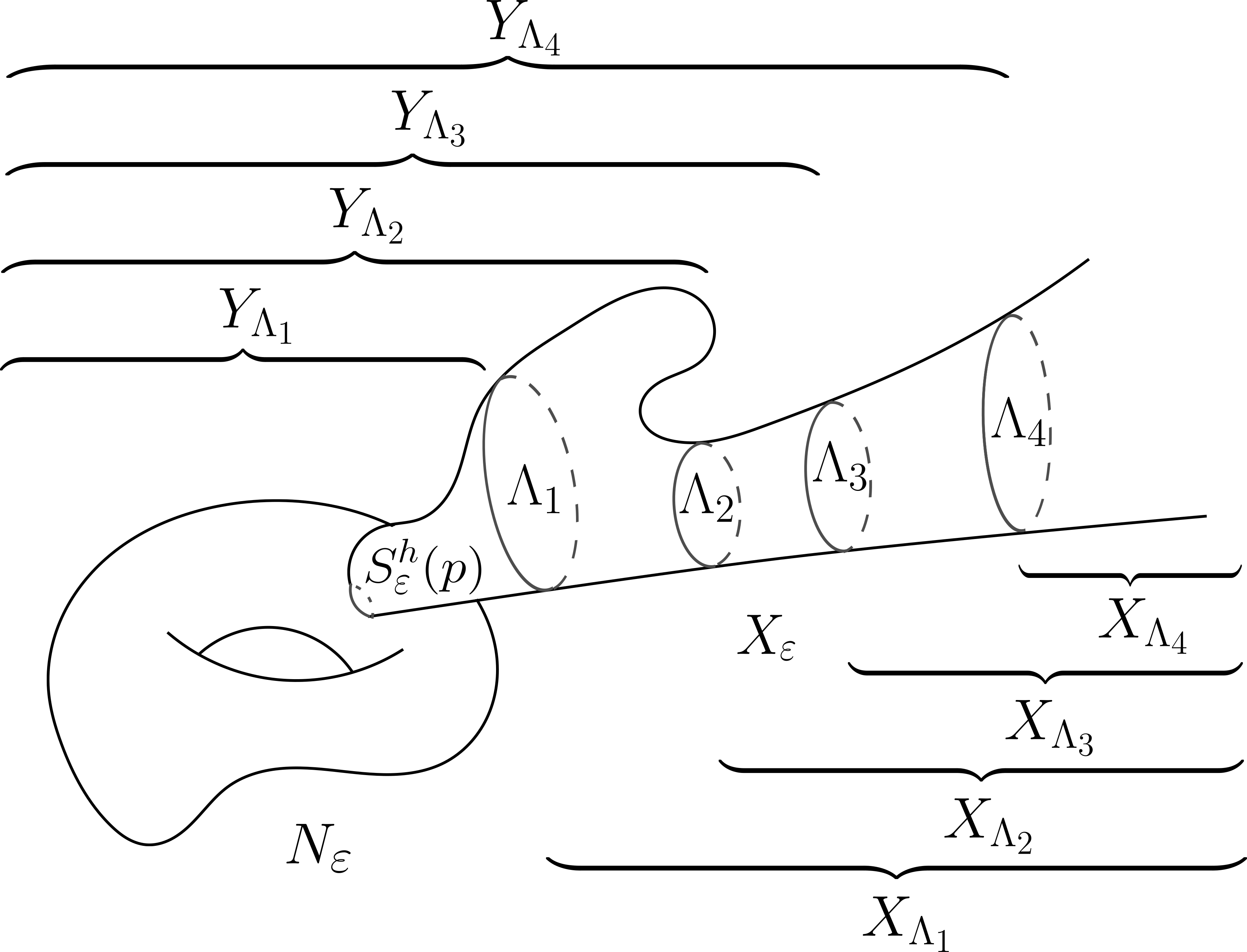}
\caption{Auxiliary hypersurfaces}
\label{avoidance_hypersurface}
\end{figure}

\subsubsection{{Intersection with} at most one avoidance hypersurface}
Setting $\tilde{\Lambda}_{2,i}$ as our avoidance hypersurfaces, we have
\begin{proposition}\label{Prop: closed}
Let $\Sigma$ be a smoothly embedded, connected closed surface or curve in $(\tilde Y,\tilde g)$ with the induced metric $g_\Sigma$. Suppose that $\Sigma$ has $\mathbb T^l$-stabilized scalar curvature $R$ which is greater than $R(\tilde g)|_{\Sigma}-\delta_1$, where $\delta_1$ is the constant in Subsection \ref{Definition of avoidance hypersurfaces}.
Then either we have
$$\Sigma\cap\tilde X_{\Lambda_{2}}=\emptyset$$
or there is a unique index $i_0$ such that
$$\Sigma\cap \tilde X_{\Lambda_{2}}\subset \tilde X_{\Lambda_2,i_0}.$$
\end{proposition}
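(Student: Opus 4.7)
The approach is by contradiction via a warped $\mu$-bubble reduction. Suppose $\Sigma$ intersects two distinct ends $\tilde X_{\Lambda_2,i}$ and $\tilde X_{\Lambda_2,j}$ with $i \neq j$. Set $\psi := \tilde\rho_i|_\Sigma$, where $\tilde\rho_i$ is the short function from Lemma \ref{Lem: rho_i} (so $|\mathrm d\psi|_{g_\Sigma} < 1$). Combining the defining formulas for $\tilde\rho_i$ with the inclusion $X_{\Lambda_2} \subset \rho_{\Lambda_1}^{-1}((T_1, +\infty))$, one checks $\psi < -T_1 - d_{\Lambda_1}'$ on $\Sigma \cap \tilde X_{\Lambda_2,i}$ and $\psi > T_1 + d_{\Lambda_1}'$ on $\Sigma \cap \tilde X_{\Lambda_2,j}$. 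By connectedness of $\Sigma$, the open band $U := \psi^{-1}((-T_1, T_1)) \subset \Sigma$ is non-empty and separates these two pieces.

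Next I run a warped $\mu$-bubble minimization on $\Sigma$ with prescribed weight $h_1 \circ \psi$, where $h_1 : (-T_1, T_1) \to \mathbb R$ is the function from Subsection \ref{Definition of avoidance hypersurfaces}. Since $\lim_{t \to \pm T_1} h_1(t) = \mp\infty$, the weight blows up on the formal boundary $\partial U \subset \Sigma$, providing a barrier of the same type exploited in Lemma \ref{Lem: smooth mu bubble}. A direct adaptation of that proof, applied to the warped product $\Sigma \times \mathbb T^l$ with the tan weight replaced by $h_1$, will produce a smooth compact $\mu$-bubble hypersurface $\Sigma' \subset U$ carrying $\mathbb T^{l+1}$-stablized scalar curvature $R'$ with
\[
R' \geq R|_{\Sigma'} + (h_1^2 - 2|\mathrm dh_1|) \circ \psi|_{\Sigma'},
\]
where the replacement of $|\mathrm d(h_1 \circ \psi)|_{g_\Sigma}$ by $(|\mathrm dh_1|)\circ\psi$ uses the shortness $|\mathrm d\psi|_{g_\Sigma}<1$.

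The remaining task is to deduce $R' > 0$ pointwise and invoke Geroch. On $\Sigma' \cap \psi^{-1}([-d_{\Lambda_1}, d_{\Lambda_1}])$, the inclusion $\tilde\rho_i^{-1}([-d_{\Lambda_1},d_{\Lambda_1}]) \subset \tilde Y_{\Lambda_1}$ gives $R(\tilde g)|_\Sigma \geq \underline R_1$; combined with the hypothesis $R > R(\tilde g)|_\Sigma - \delta_1$ and the weight inequality $h_1^2 - 2|\mathrm dh_1| \geq \delta_1 - \underline R_1$ on $[-d_{\Lambda_1}, d_{\Lambda_1}]$, this yields $R' > R(\tilde g) - \underline R_1 \geq 0$. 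On the remainder of $U$, the bound $h_1^2 - 2|\mathrm dh_1| \geq \delta_1$ together with $R(\tilde g) > 0$ gives $R' > R(\tilde g) > 0$. Consequently $\Sigma'$ is a closed smooth $(\dim\Sigma - 1)$-dimensional submanifold with $\mathbb T^{l+1}$-stablized positive scalar curvature, so each component of the associated warped product $\Sigma' \times \mathbb T^{l+1}$ is diffeomorphic to a torus admitting a metric of positive scalar curvature, contradicting Geroch's theorem.

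The main technical point to justify is the existence, interior regularity, and compact support of the $\mu$-bubble minimizer on the open band $U \subset \Sigma$ with blow-up weight $h_1 \circ \psi$. However, this is precisely the mechanism already used inside the proof of Lemma \ref{Lem: smooth mu bubble}: the blow-up of the weight on $\partial U$ automatically confines the reduced boundary of any minimizer to a compact subset of $U$, so the standard compactness, barrier, and regularity arguments there transfer verbatim once the tan weight is replaced by $h_1$.
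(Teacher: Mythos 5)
Your proof follows the same route as the paper: argue by contradiction, use the short lift $\tilde\rho_i$ from Lemma~\ref{Lem: rho_i} together with the weight $h_1$ from Lemma~\ref{Lem: weight function 1}, run a warped $\mu$-bubble reduction, and invoke Geroch. The scalar curvature bookkeeping ($R' > R(\tilde g) - \underline R_1 \geq 0$ on $\psi^{-1}([-d_{\Lambda_1},d_{\Lambda_1}])$ and $R' > R(\tilde g) > 0$ elsewhere) is correct and in fact slightly cleaner than the paper's. However, there are two technical points you gloss over that the paper handles explicitly.

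First, the band you minimize over. You place the weight $h_1\circ\psi$ directly on the open set $U=\psi^{-1}((-T_1,T_1))$, whose topological boundary $\partial U$ is generally not smooth, so Lemma~\ref{Lem: smooth mu bubble} (which is stated for a compact Riemannian band and rests on \cite[Prop.~2.1]{Zhu21}) does not apply verbatim. The paper sidesteps this by choosing a regular value $c>T_1$ of $\tilde\rho_{i_1}|_\Sigma$, working on the compact smooth band $V=\Sigma\cap\tilde\rho_{i_1}^{-1}([-c,c])$, and using the rescaled weight $h(x)=h_1\bigl(c^{-1}T_1\,\tilde\rho_{i_1}(x)\bigr)$ which blows up precisely at $\partial V$; the extra factor $c^{-1}T_1<1$ is harmless because $|\mathrm d\tilde\rho_{i_1}|<1$, provided $c$ is taken close enough to $T_1$ that the dangerous region $\{|c^{-1}T_1\tilde\rho_{i_1}|\le d_{\Lambda_1}\}$ is still contained in $\tilde Y_{\Lambda_1}$ (which works since $d_{\Lambda_1}'>d_{\Lambda_1}$). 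Your heuristic that the weight blow-up keeps the minimizer away from $\partial U$ is right, but as written you are asserting a small generalization of the existence theory rather than invoking the cited lemma.

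Second, the curve case. Lemma~\ref{Lem: smooth mu bubble} requires $2\le n\le 7$ with $n=\dim M$, so it cannot be applied with the band $V\subset\Sigma$ when $\Sigma$ is one-dimensional (and indeed the conformal factor $(\prod v_i)^{2/(n-1)}$ in \eqref{Eq: conformal tilde g} degenerates at $n=1$). The paper handles this in its Case~2 by re-reading $V\times\mathbb S^1$ as a two-dimensional band carrying $\mathbb T^{l-1}$-stablized scalar curvature (possible since $l\ge 1$) and then running the surface argument. Your concluding sentence, in which $\Sigma'$ is a ``$(\dim\Sigma-1)$-dimensional submanifold'' with $\mathbb T^{l+1}$-stablized positive scalar curvature, therefore does not literally make sense when $\dim\Sigma=1$: then $\Sigma'$ would be zero-dimensional, the warping functions would be constants, and $\Sigma'\times\mathbb T^{l+1}$ would be a flat torus, so ``admitting a metric of positive scalar curvature'' is precisely what fails. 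The conclusion is salvageable (indeed that flatness is the contradiction), but the reduction must go through the dimension-shifting trick, not the formula as you stated it. Both gaps are easily patched; with the compact-band rescaling and the $\Sigma\times\mathbb S^1$ device your argument becomes the paper's.
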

\begin{proof}
It suffices to rule out the possibility that $\Sigma$ intersects with more than one components of $\tilde X_{\Lambda_2}$. Suppose by contradiction that $\Sigma$ intersects with $\tilde X_{\Lambda_2,i_1}$ and $\tilde X_{\Lambda_2,i_2}$ where $i_1\neq i_2$. Let $\tilde \rho_{i_1}$ be the short function from Lemma \ref{Lem: rho_i}. From \eqref{construction Lambda 2} we see $\tilde \rho_{i_1}<-T_1$ in $\tilde X_{\Lambda_2,i_1}$ and $\tilde \rho_{i_1}>T_1$ in $\tilde X_{\Lambda_2,i_2}$. Combining this with our assumption we see that $\tilde \rho_{i_1}(\Sigma)$ contains a neighborhood of the interval $[-T_1,T_1]$. This allows us to pick a value $c>T_1$ such that
$$
\Sigma\cap \tilde \rho_{i_1}^{-1}([-c,c])
$$
is a smooth Riemannian manifold with boundary. Since $\Sigma$ is connected, there exists a component $V$ such that $\partial_{\pm}:=\partial V\cap\tilde \rho_{i_1}^{-1}(\pm c)$ are both non-empty. Then $(V,\partial_{-},\partial_{+})$ is a Riemannian band. We take
$$
h:\mathring V\to \mathbb R,\quad h(x)=h_1\left(c^{-1}T_1 \cdot \tilde \rho_{i_1}(x)\right),
$$
where $h_{1}$ is the function in Subsection \ref{Definition of avoidance hypersurfaces}. We divide the discussion into the following two cases.

\medskip
{\it Case 1. $\Sigma$ is a surface.} {It follows from the proof of Lemma \ref{Lem: smooth mu bubble} and \eqref{Eq: scalar change} that}
 we can find a closed curve $\gamma$ in $V$ such that $\gamma$ has $\mathbb T^{l+1}$-stabilized scalar curvature $R'$ satisfying
$$
R'\geq R+(h_1^2-2|h_1'|)\circ {(c^{-1}T_1 \cdot \tilde \rho_{i_1})}
> R(\tilde g)|_\gamma+(h_1^2-2|h_1'|)\circ {(c^{-1}T_1 \cdot \tilde \rho_{i_1})}-\delta_1.
$$
Recall that we have $\tilde \rho_{i_1}^{-1}([-d_{\Lambda_1},d_{\Lambda_1}])\subset \tilde Y_{\Lambda_1}$. This implies
$$
R(\tilde g)|_\gamma+(h_1^2-2|h_1'|)\circ {(c^{-1}T_1 \cdot \tilde \rho_{i_1})}
\geq \left(\underline R_1\chi_{[-d_{\Lambda_1},d_{\Lambda_1}]}+h_1^2-2|h_1'|\right)\circ {(c^{-1}T_1 \cdot \tilde \rho_{i_1})} \geq \delta_{1}.
$$
So we obtain $R'>0$ but this contradicts the fact that $\mathbb T^n$ cannot admit any smooth metric with positive scalar curvature.

\medskip
{\it Case 2. $\Sigma$ is a curve.} In this case, we must have $l\geq 1$. Then we can interpret the situation as the case that $V\times \mathbb S^1$ has $\mathbb T^{l-1}$-stabilized scalar curvature $R$ which is greater than ${R(\tilde g)|_\Sigma}-\delta_1$ and then deduce a contradiction as in the surface case.
\end{proof}

An analogous result holds for surfaces with boundary, where we use  $\tilde{\Lambda}_{4,i}$ as the avoidance hypersurfaces:
\begin{proposition}\label{Prop: compact}
Let $(\Sigma,\partial\Sigma)$ be a smoothly embedded, connected surface with boundary in $(\tilde Y,\tilde g)$ with the induced metric $g_\Sigma$. Suppose that $(\Sigma,\partial\Sigma)$ has $\mathbb T^l$-stabilized scalar-mean curvature $(R,H)$ such that $R\geq R(\tilde g)|_{\Sigma}-\delta_2$ and $H\geq 0$, where $\delta_{2}$ is the constant in Subsection \ref{Definition of avoidance hypersurfaces}. Then either we have
$$\Sigma\cap \tilde X_{\Lambda_{4}}=\emptyset$$
or there is a unique index $i_0$ such that
$$\Sigma\cap\tilde X_{\Lambda_{4}}\subset \tilde X_{\Lambda_4,i_0}$$
and that each component of $\partial\Sigma$ has non-empty intersection with $\tilde X_{\Lambda_2,i_0}$.
\end{proposition}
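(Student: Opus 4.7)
The proof parallels Proposition \ref{Prop: closed}, with two modifications: the free boundary $\partial\Sigma$ (on which $H\geq 0$ supplies mean-convexity) is handled by the corner $\mu$-bubble technique of Lemma \ref{Lem: corner mu bubble}; and the deeper layer $(\Lambda_3,\Lambda_4)$ together with the weight $h_2$ of Lemma \ref{Lem: weight function 2} is used when the closed-case setup is insufficient. Suppose for contradiction that the conclusion fails; then one of two cases holds. Case $(a)$: $\Sigma$ meets two distinct ends $\tilde X_{\Lambda_4,i_1}$ and $\tilde X_{\Lambda_4,i_2}$ with $i_1\neq i_2$. Case $(b)$: $\Sigma\cap\tilde X_{\Lambda_4}$ is contained in a single end $\tilde X_{\Lambda_4,i_0}$, but some connected component $\gamma_0$ of $\partial\Sigma$ is disjoint from $\tilde X_{\Lambda_2,i_0}$, hence from $\tilde X_{\Lambda_3,i_0}$.

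In Case $(a)$ we reuse the short function $\tilde\rho_{i_1}$ of Lemma \ref{Lem: rho_i}. Since $\tilde X_{\Lambda_4,i}\subset\tilde X_{\Lambda_2,i}$ for each $i$, the range of $\tilde\rho_{i_1}|_\Sigma$ contains a neighborhood of $[-T_1,T_1]$. Fix a regular value $c>T_1$; then $V:=\Sigma\cap\tilde\rho_{i_1}^{-1}([-c,c])$ is a Riemannian band with hypersurface boundaries $\partial_\pm$ and free boundary $\Gamma:=\partial\Sigma\cap V$ (on which $H\geq 0$). Under the harmless convention $\delta_2\leq\delta_1$ (attainable by shrinking $\delta_2$ in Subsection \ref{Definition of avoidance hypersurfaces}), the corner $\mu$-bubble with the weight $h_1(c^{-1}T_1\tilde\rho_{i_1})$ produces a $\mu$-bubble curve $\Sigma'\subset\mathring V$ meeting $\Gamma$ orthogonally. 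The scalar-curvature computation of Case 1 in Proposition \ref{Prop: closed} carries over verbatim to give $\mathbb T^{l+1}$-stabilized $(R',H')$ on $(\Sigma',\partial\Sigma')$ with $R'>0$ and $H'=H\geq 0$.

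In Case $(b)$, following the recipe of Lemma \ref{Lem: rho_i} with $\rho_{\Lambda_3}$ in place of $\rho_{\Lambda_1}$, we construct a smooth 1-Lipschitz function $\tilde\psi_{i_0}^{(3)}:\tilde Y\to\mathbb R$ satisfying $\tilde\psi_{i_0}^{(3)}=-\rho_{\Lambda_3}\circ\pi-d_{\Lambda_2}'$ on $\tilde X_{\Lambda_3,i_0}$, $\tilde\psi_{i_0}^{(3)}=\rho_{\Lambda_3}\circ\pi+d_{\Lambda_2}'$ on $\tilde X_{\Lambda_3,j}$ for $j\neq i_0$, and the critical containment $(\tilde\psi_{i_0}^{(3)})^{-1}([-d_{\Lambda_2},d_{\Lambda_2}])\subset\tilde Y_{\Lambda_3}$. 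From $X_{\Lambda_4}\subset\rho_{\Lambda_3}^{-1}((T_2,+\infty))$, $\tilde\psi_{i_0}^{(3)}<-T_2$ on $\tilde X_{\Lambda_4,i_0}$; and $\gamma_0\cap\tilde X_{\Lambda_3,i_0}=\emptyset$ forces $\tilde\psi_{i_0}^{(3)}|_{\gamma_0}\geq 0$. Fix a regular value $c>T_2$ and set $V:=\Sigma\cap(\tilde\psi_{i_0}^{(3)})^{-1}([-c,0])$: a Riemannian band whose barrier $\partial_-$ at value $-c$ surrounds $\Sigma\cap\tilde X_{\Lambda_4,i_0}$, and whose free boundary $\Gamma$ contains $\gamma_0$ together with other portions of $\partial\Sigma\cap V$. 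The corner $\mu$-bubble technique with weight $h_2(-c^{-1}T_2\tilde\psi_{i_0}^{(3)})$---which is $\equiv 0$ near $\{\tilde\psi_{i_0}^{(3)}=0\}$ and $\to-\infty$ at $\partial_-$---combined with Lemma \ref{Lem: weight function 2}'s estimate, the hypothesis $R\geq R(\tilde g)-\delta_2$, and the containment property (which ensures $R(\tilde g)\geq\underline R_2$ on the support of $\chi_{[0,d_{\Lambda_2}]}\circ\tilde\psi_{i_0}^{(3)}$), produces a $\mu$-bubble $\Sigma'\subset\mathring V$ whose induced $(R',H')$ satisfies $R'\geq 0$ pointwise with $R'>0$ on the $\chi=0$ locus (since $R(\tilde g)>0$ throughout $\tilde Y$), and $H'=H\geq 0$ along $\partial\Sigma'\cap\Gamma$.

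In both cases, $\Sigma'$ is a 1-manifold carrying $\mathbb T^{l+1}$-stabilized $(R',H')$ with $R'$ nonneg-but-not-identically-zero and $H'\geq 0$ on any free-boundary endpoints. A closed component $\gamma'\subset\Sigma'$ endows $\gamma'\times\mathbb T^{l+1}\cong\mathbb T^{l+2}$ with a nonneg-scalar-curvature metric having $R>0$ somewhere, contradicting the scalar-flat rigidity for tori. An interval component with mean-convex free boundary, after doubling and Miao smoothing, yields the same contradiction. The main technical obstacle is in Case $(b)$: constructing $\tilde\psi_{i_0}^{(3)}$ so that the containment $(\tilde\psi_{i_0}^{(3)})^{-1}([-d_{\Lambda_2},d_{\Lambda_2}])\subset\tilde Y_{\Lambda_3}$ holds---this mirrors the first bullet of Lemma \ref{Lem: rho_i} and is what makes the scalar-curvature boost from $h_2$ effective precisely where the lower bound $R(\tilde g)\geq\underline R_2$ is available.
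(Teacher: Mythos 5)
Your Case (a) is essentially the paper's argument: the first half of the paper's proof of Proposition \ref{Prop: compact} runs the band construction of Proposition \ref{Prop: closed} with $\tilde\rho_{i_1}$ and $h_1$, using the corner $\mu$-bubble of Lemma \ref{Lem: corner mu bubble} to accommodate the free boundary $\partial\Sigma$, and your observations that $\tilde X_{\Lambda_4,i}\subset\tilde X_{\Lambda_2,i}$ and that one should normalize $\delta_2\le\delta_1$ are the right ones. The endgame via doubling and torus rigidity also matches.

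The gap is in Case (b): the function $\tilde\psi^{(3)}_{i_0}$ you propose does not have the properties you use. First, the claim that $\gamma_0\cap\tilde X_{\Lambda_3,i_0}=\emptyset$ \emph{forces} $\tilde\psi^{(3)}_{i_0}|_{\gamma_0}\ge 0$ fails: a Lemma \ref{Lem: rho_i}--type function already equals $-d_{\Lambda_2}'<-d_{\Lambda_2}$ on $\tilde\Lambda_{3,i_0}$, so being $1$-Lipschitz it stays negative on a collar of width at least $d_{\Lambda_2}'$ outside $\tilde X_{\Lambda_3,i_0}$; since Subsection \ref{Definition of avoidance hypersurfaces} only guarantees a separation of order $d_{\Lambda_2}<d_{\Lambda_2}'$ between $\Lambda_2$ and $\Lambda_3$, this negative collar can reach past $\tilde\Lambda_{2,i_0}$, where $\gamma_0$ may sit. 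Second, after the rescaling $h_2(-c^{-1}T_2\,\tilde\psi^{(3)}_{i_0})$ the set on which you must invoke the compensation $\underline R_2\chi_{[0,d_{\Lambda_2}]}$ is $\{\tilde\psi^{(3)}_{i_0}\in[-cd_{\Lambda_2}/T_2,0]\}$ with $c>T_2$, which is strictly larger than $\{\tilde\psi^{(3)}_{i_0}\in[-d_{\Lambda_2},0]\}$ and can enter $\tilde X_{\Lambda_3,i_0}$, where the bound $R(\tilde g)\ge\underline R_2$ is not available; your stated containment does not cover it. Third, if $\tilde\psi^{(3)}_{i_0}>0$ somewhere on $\gamma_0$, then $\gamma_0$ is not a boundary portion of $V=\Sigma\cap(\tilde\psi^{(3)}_{i_0})^{-1}([-c,0])$, and the level $\{\tilde\psi^{(3)}_{i_0}=0\}$ becomes an interior edge of the band carrying zero weight and no barrier. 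The paper avoids all three issues by taking $\tilde\rho$ to be the lift of $\rho_{\Lambda_2}$ on $\tilde X_{\Lambda_2,i_0}$ extended by zero: this is nonnegative everywhere, vanishes identically on $\gamma_0$, satisfies $\tilde\rho^{-1}([0,d_{\Lambda_2}])\subset\tilde Y_{\Lambda_3}$ by the very definition of $\Lambda_3$, and is then modified on $V$ to a $1$-Lipschitz $\rho:V\to[0,T_2]$ with $\rho=0$ on $\gamma_0$, $\rho^{-1}(T_2)=\partial_+$ and $\rho^{-1}([0,d_{\Lambda_2}])=(\tilde\rho|_V)^{-1}([0,d_{\Lambda_2}])$, after which $h=h_2\circ\rho$ works. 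Your argument can be repaired by substituting this function (or by re-spacing $\Lambda_2,\Lambda_3,\Lambda_4$ with gaps large compared with $d_{\Lambda_2}'$), but as written the sign and containment properties on which Case (b) rests do not hold.
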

\begin{proof}
The proof for $\Sigma$ not touching two components of $\tilde X_{\Lambda_4}$ follows the same argument as in the proof of Proposition \ref{Prop: closed}. It remains to show that if $\Sigma$ intersects some component $\tilde X_{\Lambda_4,i_0}$ then all the boundary components of $\Sigma$ have to intersect $\tilde X_{\Lambda_2,i_0}$. Suppose by contradiction that there is a boundary component disjoint from $\tilde X_{\Lambda_2,i_0}$, which we denote by $\partial_-$. We take
\[
\tilde \rho =
\begin{cases}
\ \rho_{\Lambda_{2}}\circ\pi & \mbox{ in } \ti X_{\Lambda_2,i_{0}};\\[1mm]
\ \quad \ 0 & \mbox{ in } \ti{Y} - \ti X_{\Lambda_2,i_{0}},
\end{cases}
\]
where $\rho_{\Lambda_{2}}$ is the function from Lemma \ref{Lem: rho lambda}.

Since $\Sigma$ touches $\tilde X_{\Lambda_4,i_0}$, \eqref{construction Lambda 4} shows that the image of $\tilde\rho|_\Sigma$ must contain $[0,c]$ for some $c>T_2$. Without loss of generality we may assume $c$ to be a regular value of both $\tilde \rho|_\Sigma$ and $\tilde\rho|_{\partial\Sigma}$, and so $\tilde\rho^{-1}(c)\cap \Sigma$ is a smooth curve in $\Sigma$ which intersects $\partial\Sigma$ transversely. Take
$$
\mathcal N=\tilde\rho^{-1}\left(\left(\frac{T_2+c}{2},c\right]\right)\cap \Sigma.
$$
From the proof of Lemma \ref{Lem: inradius} we can deform $\tilde\rho^{-1}(c)\cap \Sigma$ a little bit to find a curve $\partial_{+}$ contained in $\mathcal N$ which intersects $\partial\Sigma$ in acute angles and $\ti{\rho}|_{\partial_{+}}>T_{2}$. We use $V$ to denote {the component of the} relative region on $\Sigma$ bounded by $\partial_+$ and $\partial_-$ relative to $\partial\Sigma$ (i.e. $V\subset\Sigma$ and $\partial V-(\partial_{+}\cap\partial_{-})\subset\partial\Sigma$, see Figure \ref{region_V}) {such that $\partial_{-}\subset\partial V$}. Notice that $\partial_+$ is contained in $\mathcal N$ where $\tilde \rho$-values are greater than $T_2$, so we can modify the function $\tilde \rho|_V$ to a new smooth function $\rho:V\to[0,T_2]$ (the explicit construction of $\rho$ will be given in Appendix \ref{construction of rho}) such that
\begin{itemize}\setlength{\itemsep}{1mm}
\item $\rho=0$ on $\partial_-$ and $\rho^{-1}(T_2)=\partial_+$;
\item $\rho^{-1}([0,d_{\Lambda_2}])=(\tilde\rho|_{V})^{-1}([0,d_{\Lambda_2}])$;
\item $\Lip\rho<1$.
\end{itemize}
\begin{figure}[htbp]
\centering
\includegraphics[width=\linewidth]{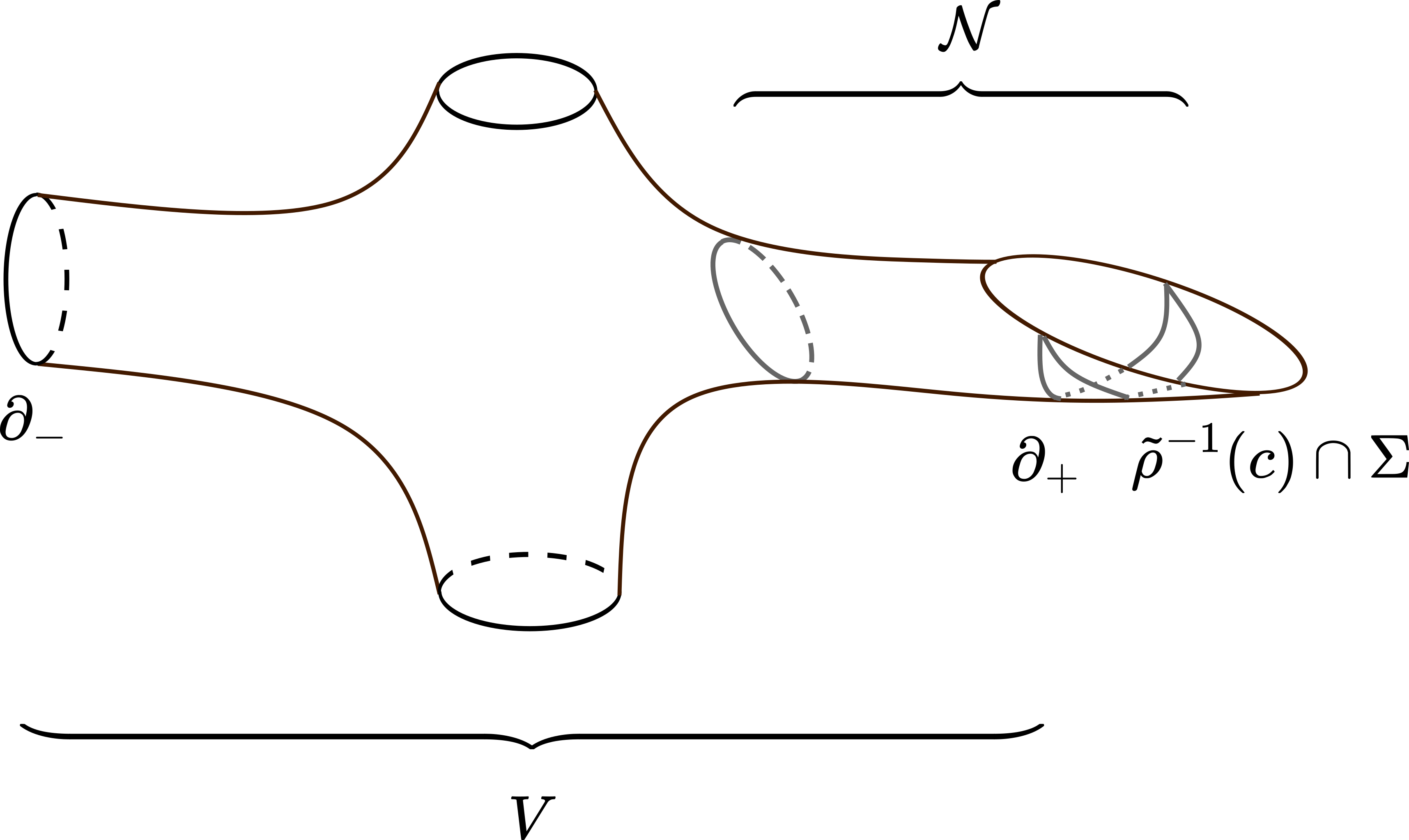}
\caption{The region $V$}
\label{region_V}
\end{figure}
Take
$
h=h_2\circ \rho
$, where $h_{2}$ is the function in Subsection \ref{Definition of avoidance hypersurfaces}.
{It follows from the proof of Lemma \ref{Lem: corner mu bubble} and \eqref{Eq: scalar change 2}-\eqref{Eq: mean curvature change} that} we can find a curve $\gamma$ in $V$ such that $(\gamma,\partial\gamma)$ has $\mathbb T^{l+1}$-stabilized scalar-mean curvature $(R',H')$ satisfying $H'=H\geq 0$ and
$$
R'\geq R+(h_2^2-2|h_2'|)\circ \rho>R(\tilde g)|_\gamma+(h_2^2-2|h_2'|)\circ \rho-\delta_2.
$$
Recall that we have $\rho^{-1}([0,d_{\Lambda_2}])=(\tilde\rho|_{V})^{-1}([0,d_{\Lambda_2}])\subset \tilde Y_{\Lambda_3}$. This implies
$$
R(\tilde g)|_\gamma+(h_2^2-2|h_2'|)\circ \rho\geq \left(\underline R_2\chi_{[0,d_{\Lambda_2}]}+h_2^2-2|h_2'|\right)\circ \rho\geq \delta_{2}
$$
and so we obtain $R'>0$. However, it is impossible to have $R'>0$ and $H\ge 0$ because otherwise by doubling trick {\cite[Theorem 5.7]{GL80a}} one can construct a smooth metric
on $\mathbb T^{l+2}$ with positive scalar curvature and this leads to a contradiction.
\end{proof}

\subsection{Extrinsic diameter estimate}\label{Extrinsic diameter estimate}
In the following argument, we will use the notion of extrinsic diameter frequently.
For convenience, we introduce the notation: for Riemannian manifold $(M,g_{M})$ and its subset $W$, denote the extrinsic diameter of $W$ in $(M,g_{M})$ by
\[
\diam(W\subset(M,g_{M})).
\]
Let $M_{n-2}$ be the $(n-2)$-dimensional submanifold from Lemma \ref{Lem: Mn-2}. In the following, we take
$$
\underline R=\min_{Y_{\Lambda_4}} R(g)
$$
and
$$
\mu_{\mathrm{loss}}=\frac{3}{4}\min\{\underline R,\delta_1,\delta_2\},
$$
where {$\mu_{\mathrm{loss}}$ will be set as the constant there in Lemma \ref{Lem: Mn-2}} and $\delta_1,\delta_2$ are the constants from Subsection \ref{Definition of avoidance hypersurfaces}. Note that all these constants are positive.

\subsubsection{Diameter estimate in dimensions three and four}
When $n=3$ or $4$, we can bound the diameter of each component of $M_{n-2}$ that intersects $\tilde Y_{\Lambda_2}$.

\begin{lemma}\label{Lem: 3D}
Let $n=3$. For each component $C$ of $M_{n-2}$ with
$$C_{\Lambda_2}:=C\cap\tilde Y_{\Lambda_2}\neq \emptyset,$$
we have $C\cap \tilde X_{\Lambda_2}\neq \emptyset$ and $$
\diam(C_{\Lambda_2}\subset (\tilde Y,\tilde g))\leq \frac{8\pi}{\sqrt{\underline R}}+\diam(\Lambda_2\subset(X_\ve,g)).
$$
\end{lemma}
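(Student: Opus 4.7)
The plan is to combine the one-end dichotomy of Proposition \ref{Prop: closed} with a per-arc application of Lemma \ref{Lem: inradius}, glued across the single avoidance hypersurface that $C$ is forced to meet. In dimension $n=3$, $M_{n-1}$ is a surface and the $\mu$-bubble construction in Lemma \ref{Lem: Mn-2} produces $M_{n-2}$ as a disjoint union of smooth closed curves, so each component $C$ is a circle carrying $\mathbb T^2$-stabilized scalar curvature bounded below by $R(\tilde g)|_C - \mu_{\mathrm{loss}}$. Since the choice of $\mu_{\mathrm{loss}}$ makes $\mu_{\mathrm{loss}} < \delta_1$, Proposition \ref{Prop: closed} applies to $C$ and yields the alternative: either $C \cap \tilde X_{\Lambda_2} = \emptyset$, or there is a unique $i_0$ with $C \cap \tilde X_{\Lambda_2} \subset \tilde X_{\Lambda_2, i_0}$.

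To exclude the first alternative, I would argue that if $C \subset \tilde Y_{\Lambda_2}$ then the inclusion $\tilde Y_{\Lambda_2} \subset \tilde Y_{\Lambda_4}$ forces $R(\tilde g)|_C \geq \underline R$, so the stabilized curvature on $C$ satisfies $R_{n-2} \geq \underline R - \mu_{\mathrm{loss}} \geq \tfrac14 \underline R > 0$. Unwinding the definition of $\mathbb T^2$-stabilization, this produces a PSC metric on $C \times \mathbb T^2 \cong \mathbb T^3$, contradicting the three-dimensional Geroch theorem. Hence $C$ must enter a single end $\tilde X_{\Lambda_2, i_0}$, proving the nonemptiness claim.

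For the diameter estimate I would write $C_{\Lambda_2}$ as the finite disjoint union of closed arcs cut from $C$ by $\tilde \Lambda_{2, i_0}$; each such arc $\alpha \subset \tilde Y_{\Lambda_2}$ has $\partial \alpha \subset \tilde \Lambda_{2, i_0}$ and inherits $\mathbb T^2$-stabilized scalar curvature at least $\tfrac14 \underline R$. Lemma \ref{Lem: inradius} (with $\Gamma = \emptyset$, $\sigma_0 = \underline R/4$) then gives $r_{\mathrm{in}}(\alpha, \partial \alpha) \leq 4\pi/\sqrt{\underline R}$, so any $p,q \in C_{\Lambda_2}$ lying in arcs $\alpha_1, \alpha_2$ admit boundary points $p', q' \in \tilde \Lambda_{2, i_0}$ with $\dist_{\tilde g}(p,p'),\, \dist_{\tilde g}(q,q') \leq 4\pi/\sqrt{\underline R}$, since the intrinsic distance along $\alpha_j$ dominates the ambient $\tilde g$-distance. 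Because the projection $\pi$ restricts to an isometric diffeomorphism $\tilde X_{\ve, i_0} \to X_\ve$ identifying $\tilde \Lambda_{2, i_0}$ with $\Lambda_2$, a path inside $\tilde \Lambda_{2, i_0}$ joining $p'$ and $q'$ has $\tilde g$-length at most $\diam(\Lambda_2 \subset (X_\ve, g))$, and the triangle inequality then delivers the asserted bound $8\pi/\sqrt{\underline R} + \diam(\Lambda_2 \subset (X_\ve, g))$.

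The only small point to verify is that the $\mathbb T^2$-stabilization produced by Lemma \ref{Lem: Mn-2} restricts to each arc $\alpha$ in the form required by Proposition \ref{Prop: closed} and Lemma \ref{Lem: inradius}; this is immediate since the warping functions $v_1, v_2$ of Lemma \ref{Lem: Mn-2} simply restrict from $M_{n-2}$ to $C$ and then to $\alpha$. I do not anticipate a serious obstacle — the real work has already been absorbed into the avoidance setup of Subsection \ref{Definition of avoidance hypersurfaces}, which was engineered precisely so that Proposition \ref{Prop: closed} can pin $C$ into a single end where the inradius estimate then controls distance to the avoidance hypersurface.
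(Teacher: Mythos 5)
Your proof is correct and follows essentially the same approach as the paper: apply Proposition \ref{Prop: closed} with $\mu_{\mathrm{loss}}<\delta_1$ to get the one-end dichotomy, rule out $C\cap\tilde X_{\Lambda_2}=\emptyset$ by producing a PSC metric on $\mathbb T^3$, and then use Lemma \ref{Lem: inradius} on the components of $C_{\Lambda_2}$ plus the isometry $\tilde X_{\ve,i_0}\cong X_\ve$ to close via the triangle inequality. One small slip: the bound $\dist_{\tilde g}(p',q')\leq\diam(\Lambda_2\subset(X_\ve,g))$ should be obtained from a path in $\tilde X_{\ve,i_0}$ (isometric to $X_\ve$) rather than a path inside $\tilde\Lambda_{2,i_0}$, since $\diam(\Lambda_2\subset(X_\ve,g))$ is the \emph{extrinsic} diameter and the nearly distance-realizing paths need not stay on the hypersurface; this does not affect the conclusion.
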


\begin{proof}
Recall from Lemma \ref{Lem: Mn-2} that $M_{n-2}$ has $\mathbb T^2$-stabilized scalar curvature $R_{n-2}$ satisfying
$$
R_{n-2}\geq R(\tilde g)|_{M_{n-2}}-\mu_{\mathrm{loss}}>R(\tilde g)|_{M_{n-2}}-\delta_1.
$$
For each component $C$ of $M_{n-2}$ it follows from the above and Proposition \ref{Prop: closed} that either $C\cap \tilde X_{\Lambda_2}=\emptyset$ or $C\cap \tilde X_{\Lambda_2}\subset \tilde X_{\Lambda_2,i_0}$ for some uniquely determined $i_0=i_0(C)$.

In the former case, we see
$$
R_{n-2}|_C\geq R(\tilde g)|_C-\mu_{\mathrm{loss}}\geq \frac{1}{4}\underline R.
$$
Since $C$ is one-dimensional, it is a closed curve. This leads to a contradiction since $\mathbb T^3$ cannot admit any smooth metric with positive scalar curvature.

In the latter case, we decompose $C_{\Lambda_2}:= C\cap \tilde Y_{\Lambda_2}$ into the union of its connected components
$$
C_{\Lambda_2}=\bigcup_i U_i.
$$
For each component $U_i$ we conclude from Lemma \ref{Lem: inradius} that $r_{\mathrm{in}}(U_i,\partial U_i)\leq 4\pi/\sqrt{\underline R}$. Recall that we have $\partial U_i\subset \tilde\Lambda_{2,i_0}$ from the fact $C\cap \tilde X_{\Lambda_2}\subset \tilde X_{\Lambda_2,i_0}$.
To estimate $\diam(C_{\Lambda_2}\subset (\tilde Y,\tilde g))$, we take $p,q\in C_{\Lambda_2}$ arbitrarily, and assume that $p\in U_{i}$ and $q\in U_{j}$. From the triangle inequality we have
\[
\begin{split}
\dist_{\ti{g}}(p,q) \leq {} & \dist_{\ti{g}}(p,\partial U_{i})+\dist_{\ti{g}}(q,\partial U_{j})+\diam(\tilde \Lambda_{2,i_0}\subset(\ti{Y},\ti{g})) \\[1mm]
\leq {} & r_{\mathrm{in}}(U_{i},\partial U_{i})+r_{\mathrm{in}}(U_{j},\partial U_{j})+\diam(\tilde \Lambda_{2,i_0}\subset(\ti{Y},\ti{g})) \\
\leq {} & \frac{8\pi}{\sqrt{\underline R}}+\diam(\tilde \Lambda_{2,i_0}\subset (\tilde X_{\ve,i_0},\tilde g)).
\end{split}
\]
It then follows that
$$
\diam(C_{\Lambda_2}\subset (\tilde Y,\tilde g))\leq \frac{8\pi}{\sqrt{\underline R}}+\diam(\tilde \Lambda_{2,i_0}\subset (\tilde X_{\ve,i_0},\tilde g)).
$$
Notice that $\pi:(\tilde X_{\ve,i_0},\tilde g)\to(X_\ve,g)$ is an isometry which maps $\tilde \Lambda_{2,i_0}$ to $\Lambda_{2}$. We obtain the desired estimate
\begin{equation}\label{C Lambda 2 diameter estimate}
\diam(C_{\Lambda_2}\subset (\tilde Y,\tilde g))\leq \frac{8\pi}{\sqrt{\underline R}}+\diam(\Lambda_{2}\subset (X_{\ve},g)),
\end{equation}
and this completes the proof.
\end{proof}

\begin{lemma}\label{Lem: 4D}
Let $n=4$. For each component $C$ of $M_{n-2}$ with
$$C_{\Lambda_2}:=C\cap\tilde Y_{\Lambda_2}\neq \emptyset,$$
we have
$$
\diam(C_{\Lambda_2}\subset (\tilde Y,\tilde g))\leq \frac{8\pi}{\sqrt{\underline R}}+\diam(\Lambda_2\subset(X_\ve,g)).
$$
\end{lemma}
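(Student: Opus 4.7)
The plan is to mirror the proof of Lemma \ref{Lem: 3D} as closely as possible. The only essential structural difference in dimension four is that each component $C$ of $M_{n-2}$ is a closed orientable surface rather than a closed curve, so the subcase in which $C$ lies entirely inside $\tilde{Y}_{\Lambda_{2}}$ can no longer be ruled out by invoking the absence of positive scalar curvature on $\mathbb{T}^{3}$ (since $\mathbb{S}^{2}\times\mathbb{T}^{2}$ admits positive scalar curvature). Instead, this subcase will be handled directly by the intrinsic diameter estimate of Corollary \ref{Cor: diameter}.

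Since $\mu_{\mathrm{loss}}<\delta_{1}$ and $\mu_{\mathrm{loss}}\leq \tfrac{3}{4}\underline{R}$, the $\mathbb{T}^{2}$-stabilized scalar curvature $R_{n-2}$ produced by Lemma \ref{Lem: Mn-2} satisfies $R_{n-2}>R(\tilde{g})|_{C}-\delta_{1}$ on every component $C$, so Proposition \ref{Prop: closed} applies to $C$ and yields the expected dichotomy: either $C\cap\tilde{X}_{\Lambda_{2}}=\emptyset$, or $C\cap\tilde{X}_{\Lambda_{2}}\subset \tilde{X}_{\Lambda_{2},i_{0}}$ for a unique index $i_{0}=i_{0}(C)$. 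In the former case $C_{\Lambda_{2}}=C\subset \tilde{Y}_{\Lambda_{2}}\subset \tilde{Y}_{\Lambda_{4}}$, so $R(\tilde{g})|_{C}\geq \underline{R}$ and therefore $R_{n-2}|_{C}\geq \underline{R}-\mu_{\mathrm{loss}}\geq \underline{R}/4$. Corollary \ref{Cor: diameter} applied with empty boundary then forces $C$ to be a topological sphere with $\diam(C,g_{C})\leq 4\pi/\sqrt{\underline{R}}$, which already implies the desired extrinsic bound.

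In the latter case, decompose $C_{\Lambda_{2}}=\bigcup_{i}U_{i}$ into its connected components. Since $C$ is connected and meets $\tilde{X}_{\Lambda_{2},i_{0}}$, every $U_{i}$ is a compact surface with non-empty boundary $\partial U_{i}\subset \tilde{\Lambda}_{2,i_{0}}$, and, as above, $R_{n-2}|_{U_{i}}\geq \underline{R}/4$. Lemma \ref{Lem: inradius} (taking $\Gamma=\emptyset$) then gives $r_{\mathrm{in}}(U_{i},\partial U_{i})\leq 4\pi/\sqrt{\underline{R}}$. Joining two arbitrary points in $C_{\Lambda_{2}}$ through a minimizing path in the common boundary component $\tilde{\Lambda}_{2,i_{0}}$ and using the isometry $(\tilde{X}_{\ve,i_{0}},\tilde{g})\cong (X_{\ve},g)$, the triangle inequality yields
$$\diam\bigl(C_{\Lambda_{2}}\subset (\tilde{Y},\tilde{g})\bigr)\leq \frac{8\pi}{\sqrt{\underline{R}}}+\diam\bigl(\Lambda_{2}\subset (X_{\ve},g)\bigr),$$
exactly as in Lemma \ref{Lem: 3D}. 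The only genuinely new step compared with the three-dimensional argument is the replacement of the $\mathbb{T}^{3}$ obstruction by Corollary \ref{Cor: diameter} in the closed-component subcase, and this is where the adaptation has to be made with some care; otherwise the argument is essentially verbatim.
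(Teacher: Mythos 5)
Your proposal is correct and takes essentially the same approach as the paper's: the paper simply says the proof is the same as Lemma~\ref{Lem: 3D} except that the subcase $C\cap\tilde X_{\Lambda_2}=\emptyset$ is now handled via Corollary~\ref{Cor: diameter} rather than the $\mathbb T^3$-obstruction, which is exactly what you do. Your only addition is spelling out the details (the inequality $\mu_{\mathrm{loss}}<\delta_1$, the bound $R_{n-2}\ge\underline R/4$, and the inclusion $\tilde Y_{\Lambda_2}\subset\tilde Y_{\Lambda_4}$), all of which are correct.
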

\begin{proof}
The proof is the same as that of Lemma \ref{Lem: 3D}, where the only difference is that $C\cap \tilde X_{\Lambda_2}$ can be possibly empty now. In this case we have from Corollary \ref{Cor: diameter} that
$$
\diam(C\subset (\tilde Y,\tilde g))\leq \diam(C,\tilde{g}|_C)\leq \frac{4\pi}{\sqrt{\underline R}}.
$$
Therefore the desired diameter estimate still holds.
\end{proof}

\subsubsection{Slice-and-dice in dimension five}
For $n=5$, it might hold that the diameter of each component
of $M_{n-2}$ is {not uniformly bounded}. Thus we need to employ the slice-and-dice procedure of Chodosh--Li \cite[Section 6]{CL20}. We first slice $M_{n-2}$ into a manifold with boundary whose second homology group comes from its boundary, and we then dice it so that the blocks after slice-and-dice have extrinsic diameter control.
\begin{proposition}\label{Prop: slice and dice}
Let $n=5$. If $M_{n-2}$ has non-empty intersection with $\tilde Y_{\Lambda_2}$, then for each component $C$ of $M_{n-2}$ we can construct a slice-and-dice of $C$ consisting of finitely many slicing surfaces $\{S_i\}$ and finitely many dicing surfaces $\{D_j\}$ such that
\begin{itemize}\setlength{\itemsep}{1mm}
\item $\{S_i\}$ are pairwise disjoint closed {connected} surfaces satisfying
$$
\diam(S_{i,\Lambda_2}\subset (\tilde Y,\tilde g))\leq \frac{8\pi}{\sqrt{\underline R}}+\diam(\Lambda_2\subset(X_\ve,g)),
$$
where $S_{i,\Lambda_2}:=S_i\cap \tilde Y_{\Lambda_2}$. {For each $i$, either $S_i\cap \tilde X_{\Lambda_2}=\emptyset$ or $S_i\cap \tilde X_{\Lambda_2}\subset \tilde X_{\Lambda_2,i_0}$ for a uniquely determined index $i_0=i_0(S_i)$};
\item $\{D_j\}$ are pairwise disjoint {connected} surfaces satisfying
$$
\diam(D_{j,\Lambda_4}\subset (\tilde Y,\tilde g))\leq \frac{16\pi}{\sqrt{\underline R}}+\diam(\Lambda_4\subset(X_\ve,g)),
$$
where $D_{j,\Lambda_4}:=D_j\cap \tilde Y_{\Lambda_4}$. {For each $j$, either $D_j\cap \tilde X_{\Lambda_4}=\emptyset$ or $D_j\cap \tilde X_{\Lambda_4}\subset \tilde X_{\Lambda_4,i_0'}$ for a uniquely determined index $i_0'=i_0'(D_j)$.}
Furthermore, if $D_j \cap \tilde{X}_{\Lambda_4} = \emptyset$, then $D_j$ is a topological sphere or disk.
\item Decompose the complement of $\{S_i\}$ and $\{D_j\}$ as the union of its connected components
$$
C-\left(\bigcup_i S_i\right)\cup\left(\bigcup_j D_j\right)=\bigcup_k U_k.
$$
Then $U_{k,\Lambda_2}:=U_k\cap \tilde Y_{\Lambda_2}$ satisfies
\[
\begin{split}
\diam(U_{k,\Lambda_2}\subset(\tilde Y,\tilde g))\leq \frac{48\pi}{\sqrt{\underline R}} & +5\diam(\Lambda_2\subset(X_\ve,g))\\
&+2\diam(\Lambda_4\subset(X_\ve,g))+\frac{8\pi}{\sqrt{\hat \mu_{\mathrm{loss}}}}
\end{split}
\]
with
$$
\hat \mu_{\mathrm{loss}}=\frac{3}{16}\min\{\underline R,\delta_1,\delta_2\}.
$$
\end{itemize}
\end{proposition}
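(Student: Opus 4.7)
The plan follows the slice-and-dice framework of Chodosh--Li \cite[Section 6]{CL20}, incorporating the avoidance hypersurfaces $\{\tilde\Lambda_{2,i}\}$ and $\{\tilde\Lambda_{4,i}\}$ from Subsection \ref{Definition of avoidance hypersurfaces}. For the slicing step, each component $C$ of $M_{n-2}$ is a three-manifold carrying $\mathbb{T}^2$-stabilized scalar curvature $R_{n-2}\geq R(\tilde g)|_C-\mu_{\mathrm{loss}}$. I would iteratively apply Lemma \ref{Lem: homology minimizing} to nontrivial classes in $H_2$ of successive quotients until the bulk second homology is exhausted, producing a finite pairwise disjoint family $\{S_i\}$ of closed surfaces. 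Each $S_i$ inherits $\mathbb{T}^3$-stabilized scalar curvature with loss at most $\mu_{\mathrm{loss}}<\delta_1$, so Proposition \ref{Prop: closed} confines $S_i\cap\tilde X_{\Lambda_2}$ to a single end. Repeating the argument of Lemma \ref{Lem: 4D}---applying Lemma \ref{Lem: inradius} to each component of $S_{i,\Lambda_2}$, or Corollary \ref{Cor: diameter} when $S_i$ misses every end---yields the claimed slicing diameter bound.

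For the dicing step, on the metric completion $\hat C$ of each connected component of $C\setminus\bigcup_i S_i$, the bulk second homology is now carried by boundary. I exhaust $\hat C$ by Riemannian bands of width at least $2\pi/\sqrt{\hat\mu_{\mathrm{loss}}-\mu_{\mathrm{loss}}}$ and apply Lemmas \ref{Lem: smooth mu bubble} and \ref{Lem: corner mu bubble} to produce finitely many pairwise disjoint $\mu$-bubbles $\{D_j\}$ with $\mathbb{T}^3$-stabilized scalar-mean curvature $(R',H')$ satisfying $R'\geq R(\tilde g)|_{D_j}-\hat\mu_{\mathrm{loss}}>R(\tilde g)|_{D_j}-\delta_2$ and $H'\geq 0$. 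Proposition \ref{Prop: compact} then traps $D_j\cap\tilde X_{\Lambda_4}$ in a single end and forces each boundary component of $D_j$ to meet $\tilde X_{\Lambda_2}$ of that same end. When $D_j\cap\tilde X_{\Lambda_4}=\emptyset$, Corollary \ref{Cor: diameter} classifies $D_j$ as a topological sphere or disk; otherwise Lemma \ref{Lem: inradius} applied to the components of $D_{j,\Lambda_4}$, together with $\diam(\Lambda_4\subset(X_\ve,g))$, produces the dicing diameter bound.

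For the block diameter, I would analyse the adjacency graph of the slicing and dicing surfaces touching $U_k$. Proposition \ref{Prop: compact} forces any dicing surface reaching an end to be attached only to slicing surfaces reaching that same end, which rules out any dicing surface serving as a ``bridge'' between two distinct ends. A combinatorial argument (packaged into the forthcoming Lemma \ref{Lem: combination diameter}) then confines the main body of the slice-and-dice set to a single end, reducing $\diam(U_{k,\Lambda_2}\subset(\tilde Y,\tilde g))$ to a bounded chain of slicing and dicing boundary diameters plus a final $\mu$-bubble contribution. Running one more $\mu$-bubble on an interior sub-band of $U_k\cap\tilde Y_{\Lambda_2}$ with loss $\hat\mu_{\mathrm{loss}}$ contributes the inradius term $8\pi/\sqrt{\hat\mu_{\mathrm{loss}}}$, while the bounded chain of at most five slicing surfaces and two dicing surfaces produces $\frac{40\pi}{\sqrt{\underline R}}+5\diam(\Lambda_2\subset(X_\ve,g))+2\diam(\Lambda_4\subset(X_\ve,g))$.

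The delicate step is the combinatorial bookkeeping of the slice-and-dice set underlying Lemma \ref{Lem: combination diameter}. Propositions \ref{Prop: closed} and \ref{Prop: compact} give only \emph{local} single-end confinement for each individual surface, so a priori the adjacency graph of $\partial U_k$ could alternate between surfaces in different ends through a chain of slicing-dicing adjacencies. Proving that this never happens---so that only a uniformly bounded chain of at most five slicing and two dicing surfaces can contribute to $\diam(U_{k,\Lambda_2})$---is where the argument requires the most care, and the precise coefficients in the stated bound encode exactly this bounded combinatorial complexity.
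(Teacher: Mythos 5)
Your architecture coincides with the paper's: slicing by iterating Lemma \ref{Lem: homology minimizing} until $H_2(\partial\hat C)\to H_2(\hat C)$ is surjective, dicing by an exhaustion $B_\delta=\Omega_0\subset\Omega_1\subset\cdots$ built from $\mu$-bubbles in bands of width comparable to $\pi/\sqrt{\hat\mu_{\mathrm{loss}}}$, and end-confinement of the individual surfaces via Propositions \ref{Prop: closed} and \ref{Prop: compact}. Two sub-steps you pass over silently are genuinely needed: (i) termination of the slicing in finitely many steps, which the paper gets from the Bamler--Li--Mantoulidis argument that $H_2(\hat C_k)/\mathrm{Im}\,(H_2(\partial\hat C_k)\to H_2(\hat C_k))$ is torsion-free of strictly decreasing rank; and (ii) the correct loss bookkeeping for the dicing surfaces, which carry total loss $\mu_{\mathrm{loss}}+\hat\mu_{\mathrm{loss}}$ (the slicing loss is inherited), not $\hat\mu_{\mathrm{loss}}$ alone --- this is why the paper checks $\mu_{\mathrm{loss}}+\hat\mu_{\mathrm{loss}}\leq\tfrac{15}{16}\delta_2$ before invoking Proposition \ref{Prop: compact}.

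The genuine gap is in your reading of the block estimate. You assert that the combinatorics yields ``a uniformly bounded chain of at most five slicing and two dicing surfaces'' and that ``the precise coefficients in the stated bound encode exactly this bounded combinatorial complexity.'' There is no bound on the number of surfaces in a component of the slice-and-dice set: arbitrarily long $\mathcal{SDSD}\cdots\mathcal{S}$ paths can occur, and an argument that tries to bound their length will fail. The actual mechanism of Lemma \ref{Lem: combination diameter} is different: using the second conclusion of Proposition \ref{Prop: compact} (every boundary component of a dicing surface meeting $\tilde X_{\Lambda_4,i_0}$ must meet $\tilde X_{\Lambda_2,i_0}$) together with the fact that a slicing surface meets at most one $\tilde X_{\Lambda_2,i}$, one shows that \emph{all} dicing surfaces in the path (after discarding at most two disk-type endpoints) meet the same $\tilde X_{\Lambda_4,i_0}$, hence all slicing surfaces meet the same compact hypersurface $\tilde\Lambda_{2,i_0}$; the uniform diameter bound then comes from the triangle inequality routed through $\tilde\Lambda_{2,i_0}$, independently of the path length --- this is where the coefficients $3\diam(\Lambda_2)$ and $2\diam(\Lambda_4)$ come from. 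Separately, for a block $U_k$ whose inner dicing boundary $\partial\Omega_{l-1}\cap\partial U_k$ has $m\geq 2$ components, you must show that these components and the slicing surfaces they meet all lie in a \emph{single} connected component of the slice-and-dice set; otherwise two of them could be far apart and the block diameter uncontrolled. This is not forced by end-confinement: the paper proves it homologically, by observing that otherwise one can form a relative $2$-cycle in $\hat C$ met exactly once by a closed curve, contradicting the surjectivity of $H_2(\partial\hat C)\to H_2(\hat C)$ arranged in the slicing step. Without these two inputs your block estimate does not close.
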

\begin{proof}
The proof follows from the argument of Chodosh--Li \cite[Section 6]{CL20} (see also \cite[Section 2]{Zhu23}) with an essential improvement on the extrinsic diameter control of slicing surfaces, dicing surfaces and blocks after slice-and-dice. The proof will be divided into two steps.

\medskip
\noindent
{\bf Step 1.} Slicing.
\medskip

We are going to show that there are pairwise disjoint embedded closed {connected} surfaces $S_1,S_2,\ldots,S_{i_1}$ satisfying
\begin{itemize}\setlength{\itemsep}{1mm}
\item the inclusion $i:H_2(\partial\hat C)\to H_2(\hat{C})$ is surjective, where $\hat C$ is the metric completion of $C-\cup_i S_i$;
\item and we have
$$
\diam(S_{i,\Lambda_2}\subset (\tilde Y,\tilde g))\leq \frac{8\pi}{\sqrt{\underline R}}+\diam(\Lambda_2\subset(X_\ve,g)),
$$
where $S_{i,\Lambda_2}:=S_i\cap \tilde Y_{\Lambda_2}$.
\end{itemize}

{\it Inductive construction of $\{S_i\}$.} Set $\hat{C}_{0}= C$. If $\hat C_0$ satisfies $H_2(\hat C_0)=0$, then we are done (i.e. there is no $S_{i}$).
Otherwise $\hat C_0$ satisfies $H_2(\hat C_0)\neq 0$ and we can fix a non-zero class $\beta_1\in H_2(\hat C_0)$. Recall that $M_{n-2}$ has $\mathbb T^2$-stabilized scalar curvature $R_{n-2}$ satisfying
\begin{equation}\label{Eq: SD1}
R_{n-2}\geq R(\tilde g)|_{M_{n-2}}-\mu_{\mathrm{loss}}.
\end{equation}
The same thing holds for $\hat C_0$. It follows from Lemma \ref{Lem: homology minimizing} that there is an embedded surface $S$ with integer multiplicity representing $\beta_{1}$ such that $S$ has $\mathbb T^3$-stabilized scalar curvature $R_{n-3}$ which is no less than $R_{n-2}|_S$, and the metric completion of $\hat{C}_{0}-S$ associated with its boundary has $\mathbb{T}^{2}$-stabilized scalar-mean curvature $(R_{n-2},0)$.
Just take one component of $S$ {representing a non-zero homology class} and we denote it by $S_1$.

Next we consider the metric completion $\hat C_1$ of $\hat C_0-S_1$. In a natural way, $(\hat C_1,\partial\hat C_1)$ has $\mathbb T^2$-stabilized scalar-mean curvature $(R_{n-2},0)$ where $R_{n-2}$ is no less than $R(\tilde g)|_{M_{n-2}}-\mu_{\mathrm{loss}}$. Similarly, if the map $H_2(\partial \hat C_1)\to H_2(\hat C_1)$ is surjective, then we are done. Otherwise, we can find an embedded {closed connected} surface $S_2$ representing a non-trivial class
$$
\beta_2\in H_2(\hat C_1)-\mathrm{Im}\left(H_2(\partial \hat C_1)\to H_2(\hat C_1)\right),
$$
which has $\mathbb T^3$-stabilized scalar curvature $R_{n-3}$ that is no less than $R_{n-2}|_{S_2}$  and the metric completion $(\hat C_2,\partial\hat C_2)$ of $\hat{C}_{1}-S_{2}$ has $\mathbb{T}^{2}$-stabilized scalar-mean curvature $(R_{n-2},0)$.

By induction we can construct a sequence of surfaces $\{S_i\}_{i\in I}$ satisfying
$$[S_i]\in H_2(\hat C_{i-1})-\mathrm{Im}\left(H_2(\partial \hat C_{i-1})\to H_2(\hat C_{i-1})\right)$$
and $S_i$ has $\mathbb T^3$-stabilized scalar curvature $R_{n-3}$ which is no less than $R_{n-2}|_{S_i}$. Also, each $(\hat C_i,\partial\hat C_i)$ has $\mathbb{T}^{2}$-stabilized scalar-mean curvature $(R_{n-2},0)$.

\medskip

{\it Termination of the construction.} We have to show that the construction terminates in finitely many steps. This comes from a topological argument by Bamler, Li and Mantoulidis (see the proof of \cite[Lemma 2.5]{BLM22}) and here we write out the details for completeness. Denote $i_k:H_2(\partial\hat C_k)\to H_2(\hat C_k)$ and the goal is to show that the quotient $Q_k=H_{2}(\hat C_k)/\mathrm{Im}\,i_k$ is torsion-free and has strictly decreasing rank when $k$ increases. To see that $Q_k$ is torsion-free we consider the exact sequence
$$
H_2(\partial \hat C_k)\to H_2(\hat C_k)\to H_2(\hat C_k,\partial\hat C_k)\to H_1(\partial\hat C_k).
$$
Recall that $\mathrm{dim}\,\hat C_k=3$ and so $H_2(\hat C_k,\partial\hat C_k)\cong H^1(\hat C_k)$ is torsion-free from the universal coefficient theorem. From the above exact sequence $Q_k$ is isometric to the image of $H_2(\hat C_k)\to H_2(\hat C_k,\partial\hat C_k)$, which is a subgroup of $H_2(\hat C_k,\partial\hat C_k)$ and so is torsion-free. Next we show that the rank of $Q_k$ decreases strictly. Let $\mathcal N_{S_{k+1}}$ denote the tubular neighborhood of $S_{k+1}$ in $\hat C_k$. Consider the exact sequence
$$
H_2(\partial\mathcal N_{S_{k+1}})\to H_2(\hat C_{k+1})\oplus H_2(\mathcal N_{S_{k+1}})\to H_2(\hat C_{k}).
$$
It is clear that $\mathrm{Ker}\big(H_2(\hat C_{k+1})\to H_2(\hat C_k)\big)$ is contained in the image of $H_2(\partial\mathcal N_{S_{k+1}})\to H_2(\hat C_{k+1})$ and so contained in the image of $i_{k+1}$. This induces an injective map
$$
Q_{k+1}\to H_2(\hat C_k)/\mathrm{ Im }\,i_{k+1}\cong Q_k/[S_{k+1}]
$$
and so $\mathrm{rank}\, Q_{k+1}<\mathrm{rank}\, Q_k$.

\medskip
{\it Diameter bound for slicing surfaces.} Recall that $\mu_{\mathrm{loss}}$ is no greater than $3\delta_1/{4}$. Then we can apply Proposition \ref{Prop: closed} to each $S_i$ so that we obtain either $S_i\cap \tilde X_{\Lambda_2}=\emptyset$ or $S_i\cap \tilde X_{\Lambda_2}\subset \tilde X_{\Lambda_2,i_0}$ for a uniquely determined index $i_0=i_0(S_i)$. In the former case, it follows from
$$R_{n-3}\geq R_{n-2}|_{S_i}\geq R(\tilde g)|_{S_i}-\mu_{\mathrm{loss}}\geq \frac{1}{4}\underline R$$
and Corollary \ref{Cor: diameter} that
$$
\diam(S_{i,\Lambda_2}\subset (\tilde Y,\tilde g))\leq \diam(S_i,\ti{g}|_{S_i})\leq \frac{4\pi}{\sqrt{\underline R}}.
$$
In the latter case, we write $S_{i,\Lambda_2}=\cup_l S_{il}$ as the decomposition into connected components. Clearly we have $\partial S_{il}\subset \tilde \Lambda_{2,i_0}$ from the fact $S_i\cap \tilde X_{\Lambda_2}\subset \tilde X_{\Lambda_2,i_0}$. From Lemma \ref{Lem: inradius} we see $r_{\mathrm{in}}(S_{il},\partial S_{il})\leq 4\pi/\sqrt{\underline R}$ and the triangle inequality (cf. the argument of \eqref{C Lambda 2 diameter estimate}) shows the desired estimate:
$$
\diam(S_{i,\Lambda_2}\subset (\tilde Y,\tilde g))\leq \frac{8\pi}{\sqrt{\underline R}}+\diam(\Lambda_2\subset(X_\ve,g)).
$$

\medskip
\noindent
{\bf Step 2}. Dicing.
\medskip

Recall that $\hat C$ denotes the metric completion of $C-\cup_i S_i$. From the fact $C\cap \tilde Y_{\Lambda_2}\neq \emptyset$ we have $\hat C\cap \tilde Y_{\Lambda_2}\neq \emptyset$ as well. Fix a point $p\in \hat C\cap \tilde Y_{\Lambda_2}$ and take a small geodesic ball $B_\delta$ centered at $p$. Since $\tilde Y_{\Lambda_2}$ is open, by taking  $\delta$ small enough we can guarantee $\partial B_\delta\cap\tilde X_{\Lambda_{2}}=\emptyset$  and
$$
\diam(\partial B_{\delta}\subset (\tilde Y,\tilde g))\leq \frac{16\pi}{\sqrt{\underline R}}+\diam(\Lambda_4\subset(X_\ve,g)).
$$
Set
$$
\hat \mu_{\mathrm{loss}}=\frac{3}{16}\min\{\underline R,\delta_1,\delta_2\}.
$$
In the following, we divide the argument into two cases.

\medskip
{\it Case 1. The whole $\hat C$ is contained in the $(4\pi/\sqrt{\hat\mu_{\mathrm{loss}}})$-neighborhood of $\partial B_\delta$.} By assumption for each point $q\in \hat C\cap \tilde Y_{\Lambda_2}$ we can find a curve $\gamma$ connecting $q$ and $\partial B_\delta$ with length no greater than $4\pi/\sqrt{\hat\mu_{\mathrm{loss}}}$.
It follows from the diameter bound of $\partial B_\delta$ and  the triangle inequality (cf. the argument of \eqref{C Lambda 2 diameter estimate}) that

$$\diam(\hat C\subset (\tilde Y,\tilde g))\leq \frac{8\pi}{\sqrt {\hat \mu_{\mathrm{loss}}}}+\frac{16\pi}{\sqrt{\underline R}}+\diam(\Lambda_4\subset(X_\ve,g)).$$

\medskip
{\it Case 2. The complement $\hat C-B( B_\delta;4\pi/\sqrt{\hat \mu_{\mathrm{loss}}})$ is non-empty.} In this case, we have to construct suitable surfaces for dicing. For our purpose let us {take a smoothing $(A,\partial_{+}^{A},\partial_{-}^{A})$ of}
$$\left(\bar B(B_\delta;4\pi/\sqrt{\hat \mu_{\mathrm{loss}}})-B(B_\delta;\pi/\sqrt{\hat \mu_{\mathrm{loss}}}),
 \partial B(B_\delta;4\pi/\sqrt{\hat \mu_{\mathrm{loss}}}),
\partial B(B_\delta;\pi/\sqrt{\hat \mu_{\mathrm{loss}}})\right)
$$
such that $\dist(\partial_+^{A},\partial_-^{A})>2\pi/\sqrt{\hat \mu_{\mathrm{loss}}}.$ {We focus on the component $V$ such that $\partial_-:=\partial V\cap\partial_+^{A}$ and $\partial_+:=\partial V\cap\partial_+^{A}$ are both non-empty. Then $(V,\partial_{+},\partial_{-})$ is a Riemannian band.} With the same deformation argument as the one at the end of the proof of Lemma \ref{Lem: inradius}, we can further assume that $\partial_+\cup\partial_-$ intersects with $\overline{\partial V-(\partial_+\cup\partial_-)}$ in acute angles. By construction, $(\hat C,\partial\hat C)$ has $\mathbb T^2$-stabilized scalar-mean curvature $(R_{n-2},0)$ and the same thing holds for $(V,\overline{\partial V-(\partial_+\cup\partial_-)})$. Now it follows from Lemma \ref{Lem: corner mu bubble} and \eqref{Eq: SD1} that we can find a surface $\Sigma$ bounding a region with $\partial_-$ such that $(\Sigma,\partial\Sigma)$ has $\mathbb T^3$-stabilized scalar-mean curvature $(R_{n-3}',H_{n-3}')$ satisfying
$$
R_{n-3}'\geq R(\tilde g)|_{\Sigma}-\mu_{\mathrm{loss}}-\hat \mu_{\mathrm{loss}}\mbox{ and }H_{n-3}'=0.
$$
Clearly $\Sigma$ bounds a relative region $\Omega_1$ containing $B_\delta$ relative to $\partial \hat C$ (i.e. $B_{\delta}\subset\Omega_{1}$ and $\partial\Omega_{1}-\Sigma\subset\partial\hat{C}$). Since $\Omega_1$ may have multiple components, we take the component of $\Omega_1$ containing $B_\delta$, still denoted by $\Omega_1$.

With $B_\delta$ replaced by $\Omega_1$ we can repeat the above construction and inductively we end up with an exhaustion
$$
B_\delta=\Omega_0\subset\Omega_1\subset \Omega_2\subset \cdots \subset\Omega_m\subset\hat C,
$$
where each $\Omega_{l}$ is connected and the finiteness of this exhaustion comes from the facts
$$B(B_\delta,\pi {l}/\sqrt{\hat \mu_{\mathrm{loss}}})\subset \Omega_{l}$$ and that $\hat C$ has bounded diameter from its compactness. For each $\Omega_{l}$ its reduced boundary $\partial\Omega_{l}\cap \mathring {\hat C}$ has $\mathbb T^3$-stabilized scalar-mean curvature $(R_{n-3}',H_{n-3}')$ satisfying
$$
R_{n-3}'\geq R(\tilde g)|_{\Sigma}-\mu_{\mathrm{loss}}-\hat \mu_{\mathrm{loss}}\mbox{ and }H_{n-3}'=0.
$$
The dicing surfaces $D_j$ are taken to be components of the induced boundaries $\partial\Omega_{l} \cap \mathring {\hat C}$.

\medskip

{\it Diameter bound for dicing surfaces.} Recall that we have
$$
\mu_{\mathrm{loss}}+\hat \mu_{\mathrm{loss}}\leq \frac{15}{16}\delta_2.
$$
So we can apply Proposition \ref{Prop: compact} to each dicing surface $D_j$  and conclude that either $D_j\cap \tilde X_{\Lambda_4}=\emptyset$ or there is a uniquely determined index $i_0'$ such that $D_j\cap \tilde X_{\Lambda_4}\subset \tilde X_{\Lambda_4,i_0'}$.
In the former case, we conclude from Corollary \ref{Cor: diameter}, $R_{n-3}'\geq \underline R/16$ and $H_{n-3}'=0$ that  $D_j$ is a topological sphere or disk and
$$
\diam(D_{j,\Lambda_4}\subset (\tilde Y,\tilde g))\leq \diam(D_j)\leq \frac{8\pi}{\sqrt{\underline R}},\mbox{ where }D_{j,\Lambda_4}:=D_j\cap \tilde Y_{\Lambda_4}.
$$
In the latter case, we write $D_{j,\Lambda_4}=\cup_l D_{jl}$ where $D_{jl}$ are components of $D_{j,\Lambda_4}$ with $\partial D_{jl}-\partial D_j\subset \tilde \Lambda_{4,i_0'}$. It follows from Lemma \ref{Lem: inradius} and the triangle inequality (cf. the argument of \eqref{C Lambda 2 diameter estimate}) that
$$
\diam(D_{j,\Lambda_4}\subset (\tilde Y,\tilde g))\leq \frac{16\pi}{\sqrt{\underline R}}+\diam(\Lambda_4\subset (X_\ve,g)).
$$

\medskip

{\it Diameter bound for blocks.} Decompose the complement of $\{S_i\}$ and $\{D_j\}$ into its connected components as
$$
C-\left(\bigcup_i S_i\right)\cup\left(\bigcup_j D_j\right)=\bigcup_k U_k.
$$
We are going to estimate the extrinsic diameter bound for $U_{k,\Lambda_2}:=U_k\cap \tilde Y_{\Lambda_2}$.

First we need to prove the following key lemma.
\begin{lemma}\label{Lem: combination diameter}
Let $\Gamma$ be a connected component of the slice-and-dice trace
$$
\mathcal T_{SD}:=\left(\bigcup_i S_i\right)\cup\left(\bigcup_j D_j\right).
$$
Then we have
$$
\diam(\Gamma_{\Lambda_2}\subset (\tilde Y,\tilde g))\leq \frac{48\pi}{\sqrt{\underline R}}+3\diam(\Lambda_2\subset(X_\ve,g))+2\diam(\Lambda_4\subset(X_\ve,g)),
$$
where $\Gamma_{\Lambda_2}:=\Gamma\cap \tilde Y_{\Lambda_2}$.
\end{lemma}
\begin{proof}
Notice that $\Gamma$ has the structure of a graph where each vertex corresponds to a slicing surface or a dicing surface, and there is an edge between two vertices if the two surfaces representing the vertices have non-empty intersection. After marking each vertex by $\mathcal S$ or $\mathcal D$ to record its source from slicing surfaces or dicing surfaces, we know from the construction ($\{S_{i}\}$ are pairwise disjoint and so are $\{D_{j}\}$) that each edge must be of $\mathcal S$-$\mathcal D$ type (See Figure \ref{graph structure}).
To obtain a diameter bound of $\Gamma_{\Lambda_2}$ we just need to estimate a uniform diameter bound of $\Gamma_{0,\Lambda_2}:=\Gamma_0\cap \tilde Y_{\Lambda_2}$ for any path-structured ``subcomplex'' $\Gamma_0$ of $\Gamma$ (See Figure \ref{subcomplex}).

\begin{figure}[htbp]
\centering
\includegraphics[width=12cm]{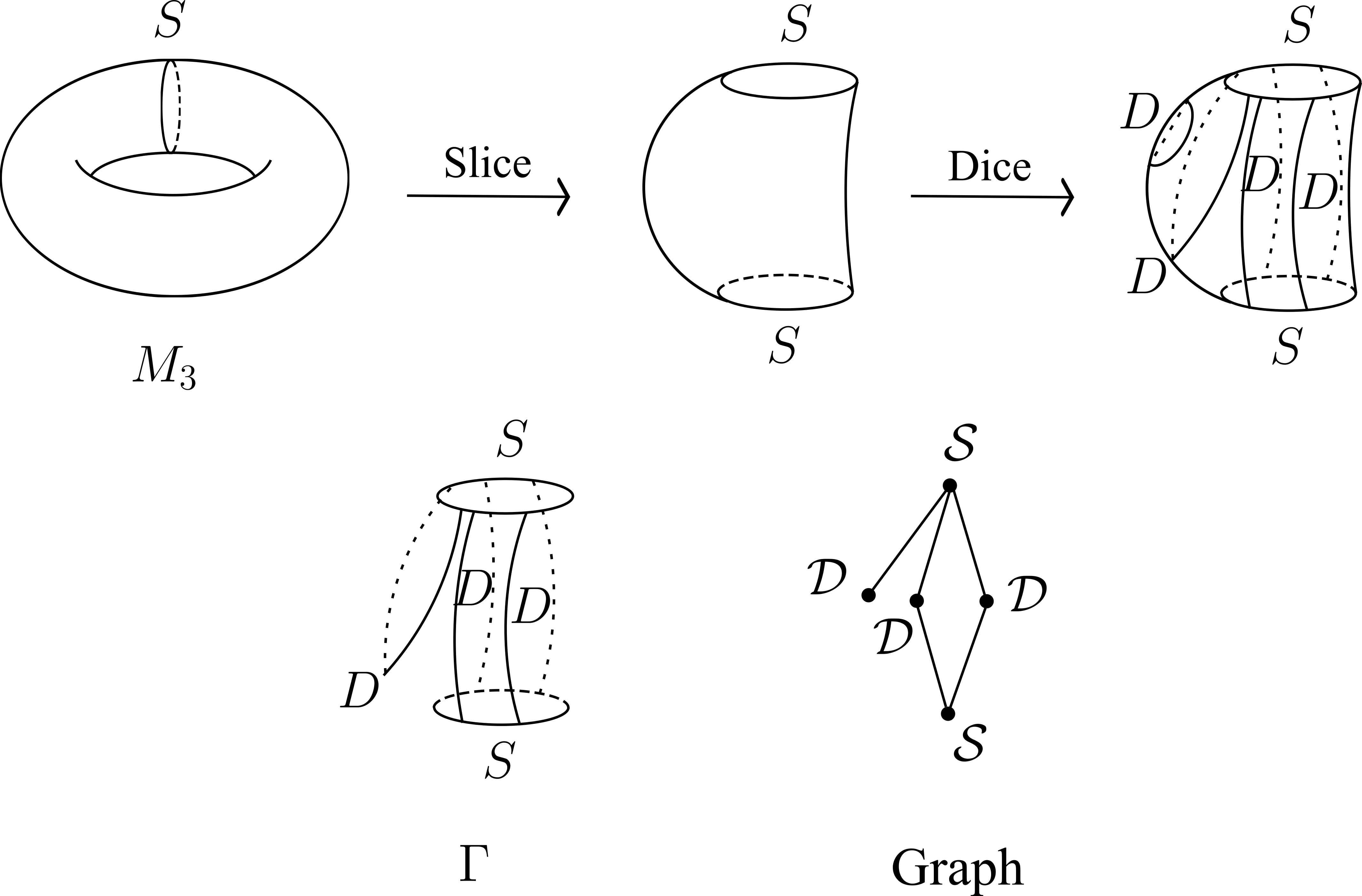}
\caption{The graph structure of $\Gamma$}
\label{graph structure}
\end{figure}

\begin{figure}[htbp]
\centering
\includegraphics[width=12cm]{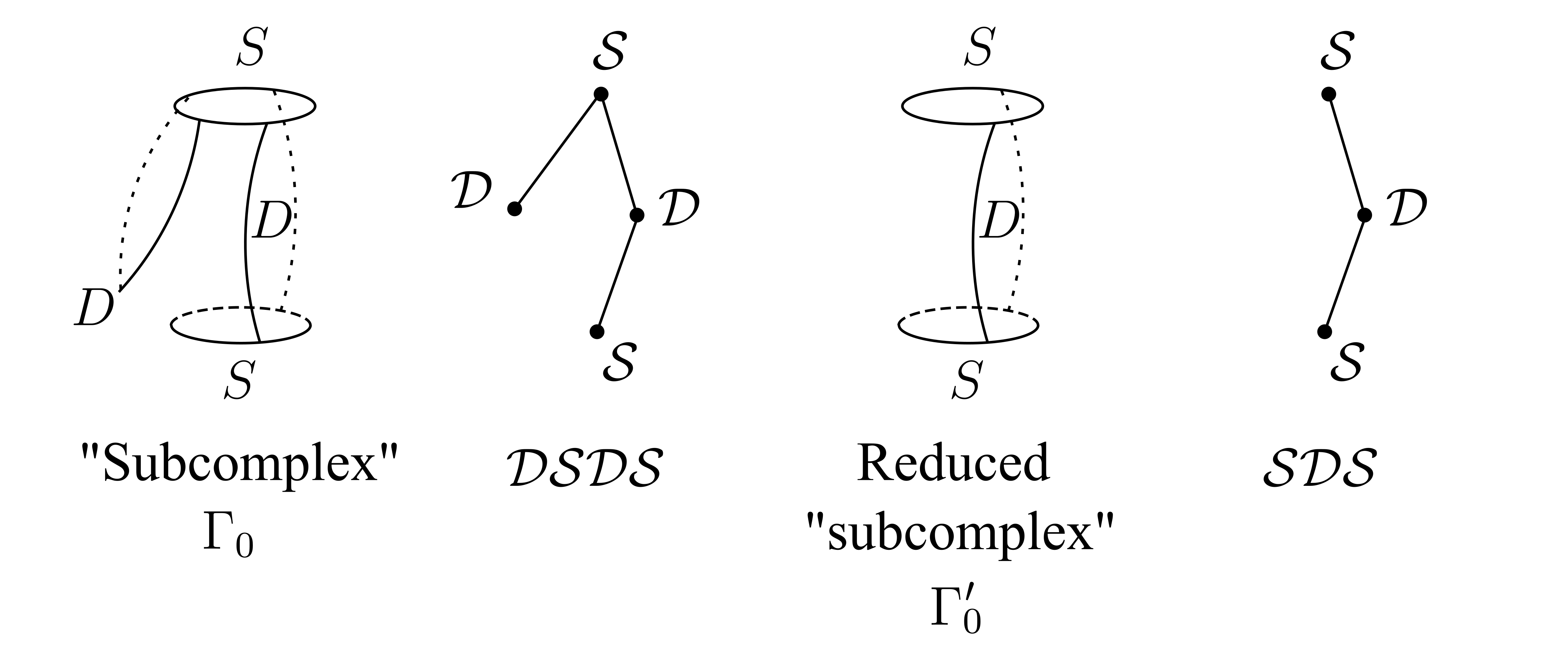}
\caption{``subcomplex'' $\Gamma_0$ and reduced ``subcomplex'' $\Gamma_0'$ }
\label{subcomplex}
\end{figure}

We begin with the following observation: there are at most two dicing surfaces in $\Gamma_0$ disjoint from $\tilde X_{\Lambda_4}$. In order to see this we recall that if a dicing surface $D_j$ is disjoint from $\tilde X_{\Lambda_4}$, then it has to be a topological sphere or disk. In the former case, $\Gamma_0$ is a single dicing sphere and we obtain the required diameter estimate
$$
\diam(\Gamma_{0,\Lambda_2})\leq  \frac{8\pi}{\sqrt{\underline R}}.
$$
In the latter case, since a dicing disk has only one boundary component and thus can only intersect exactly one slicing surface, it must lie in the end-point position of the path structure corresponding to $\Gamma_0$. Thus the number of dicing disks in $\Gamma_0$ cannot exceed two, establishing our observation. After removing the (at most two) dicing surfaces in the end-point position of $\Gamma_0$, we obtain a reduced ``subcomplex'' $\Gamma_0'$ which has the path structure of the type $\mathcal S\mathcal D\mathcal S\mathcal D\cdots\mathcal S$, where each dicing surface intersects with some $\tilde X_{\Lambda_4,i}$ (See Figure \ref{subcomplex}). When there is no $\mathcal D$-component (i.e. $\Gamma_0'$ consists of a single slicing surface), we obtain the required diameter estimate
$$
\diam(\Gamma_{0,\Lambda_2}'\subset (\tilde Y,\tilde g))\leq \frac{8\pi}{\sqrt{\underline R}}+\diam(\Lambda_2\subset (X_\ve,g)).
$$
Otherwise, we claim that all dicing surfaces in $\Gamma_0'$ must intersect the same $\tilde X_{\Lambda_{4},i_0}$. Clearly we are done if there is only one $\mathcal D$-component, so we just need to deal with the case when there are at least two $\mathcal D$-components. For our purpose, we take a closer look at the $\mathcal D\mathcal S\mathcal D$-structure with two dicing surfaces involved. By Proposition \ref{Prop: compact}, we see that if a dicing surface $D_j$ intersects some $\tilde X_{\Lambda_4,i}$ then every component of $\partial D_j$ must intersect $\tilde X_{\Lambda_2,i}$ with the same index $i$. Combining the fact that a slicing surface can only intersect one $\tilde X_{\Lambda_2,i}$, we conclude that adjacent dicing surfaces in $\Gamma_0'$ must intersect the same $\tilde X_{\Lambda_{4},i_0}$ for some index $i_0$ and consequently all dicing surfaces in $\Gamma_0'$ intersect the same $\tilde X_{\Lambda_{4},i_0}$. As a consequence, every slicing surface in $\Gamma_0'$ must intersect $\tilde X_{\Lambda_2,i_0}$ since it contains some boundary component of the dicing surfaces.

Take any pair of points $p,q\in \Gamma_{0,\Lambda_2}'$. Then we can find two surfaces $\Sigma_1$ and $\Sigma_2$ in $\Gamma_0'$ such that $p\in \Sigma_{1,\Lambda_2}:=\Sigma_{1}\cap\ti{Y}_{\Lambda_{2}}$ and $q\in \Sigma_{2,\Lambda_2}:=\Sigma_{2}\cap\ti{Y}_{\Lambda_{2}}$. From our previous discussion we can find a point $p^*$ in $\Sigma_1\cap \tilde \Lambda_{2,i_0}$ and a point $q^*$ in $\Sigma_2\cap \tilde \Lambda_{2,i_0}$. Then we have
\[
\begin{split}
\dist(p,q)&\leq \dist(p,p^*)+\dist(p^*,q^*)+\dist(q^*,q)\\
&\leq \frac{32\pi}{\sqrt{\underline R}}+3\diam(\Lambda_2\subset(X_\ve,g))+2\diam(\Lambda_4\subset(X_\ve,g)).
\end{split}
\]
Since $p$ and $q$ are chosen arbitrarily, we have the same bound for the extrinsic diameter $\diam(\Gamma_{0,\Lambda_2}'\subset(\tilde Y,\tilde g))$.
Recall that $\Gamma_0$ and $\Gamma_0'$ differ by at most two slicing disks with diameter bounded by $8\pi/\sqrt{\underline R}$, we conclude
$$
\diam(\Gamma_{0,\Lambda_2}\subset(\tilde Y,\tilde g))\leq \frac{48\pi}{\sqrt{\underline R}}+3\diam(\Lambda_2\subset(X_\ve,g))+2\diam(\Lambda_4\subset(X_\ve,g)).
$$
The proof is completed by noticing that any pair of points $p,q\in \Gamma_{\Lambda_2}$ can be realized as a pair of points in some $\Gamma_{0,\Lambda_2}$ for some ``subcomplex'' $\Gamma_0$ with path structure.
\end{proof}

Now we are ready to give diameter estimates for blocks $U_k$. Recall that $U_k$ denotes the block obtained from the $l$-th dicing, namely a component of $\Omega_l-\Omega_{l-1}$. From our construction of the dicing surfaces in Step 2, every point in $U_k$ has distance no greater than $4\pi/\sqrt{\hat \mu_{\mathrm{loss}}}$ from $\partial\Omega_{l-1} \cap \partial U_k$. Thus, to obtain a diameter estimate of $U_k$, it suffices to bound $\partial\Omega_{l-1}\cap \partial U_k$. For this end, we denote the components of $\partial\Omega_{l-1}\cap \partial U_k$ by $D_{k1}, D_{k2}, \ldots, D_{km}$. {Note that each $D_{ki}$ is a dicing surface obtained from the $(l-1)$-th dicing. To complete the proof of Proposition \ref{Prop: slice and dice}}, we divide the discussion into the following two cases:

\medskip

{\it Case 1. $m=1$.} If we have $D_{k1}\cap \tilde X_{\Lambda_4}=\emptyset$, then the triangle inequality (cf. the argument of \eqref{C Lambda 2 diameter estimate}) shows
$$
\diam(U_{k}\subset (\tilde Y,\tilde g))\leq \diam(D_{k1})+\frac{8\pi}{\sqrt{\hat \mu_{\mathrm{loss}}}}\leq \frac{8\pi}{\sqrt{\underline R}}+\frac{8\pi}{\sqrt{\hat \mu_{\mathrm{loss}}}}.
$$
Otherwise, we consider the set
$$
\mathcal K=D_{k1,\Lambda_4}\cup\left(\bigcup_{i:D_{k1}\cap \tilde X_{\Lambda_4,i}\neq \emptyset}\tilde \Lambda_{4,i}\right),
$$
where $D_{k1,\Lambda_4}:=D_{k1}\cap \tilde Y_{\Lambda_4}$.
Clearly we have that
\begin{itemize}\setlength{\itemsep}{1mm}
\item when $D_{k1}$ is completely contained in some $\tilde X_{\Lambda_4,i}$, we have $\mathcal K=\tilde \Lambda_{4,i}$ and so $\diam(\mathcal K\subset (\tilde Y,\tilde g))\leq \diam(\Lambda_4\subset(X_\ve,g))$;
\item otherwise $D_{k1,\Lambda_4}\neq \emptyset$ and so $D_{k1}\cap \tilde \Lambda_{4,i}\neq \emptyset$ for {a unique} index $i$ satisfying $D_{k1}\cap \tilde X_{\Lambda_4,i}\neq \emptyset$. From the triangle inequality  (cf. the argument of \eqref{C Lambda 2 diameter estimate}) we conclude
\[
\begin{split}
\diam(\mathcal K\subset(\tilde Y,\tilde g)) & \leq \diam(\Lambda_4\subset(X_\ve,g))+\diam(D_{k1,\Lambda_4}\subset (\tilde Y,\tilde g))\\
& \leq \frac{16\pi}{\sqrt{\underline R}}+{2}\diam(\Lambda_4\subset(X_\ve,g)).
\end{split}
\]
\end{itemize}
Given any point $p$ in $U_{k,\Lambda_4}:=U_{k}\cap\tilde{Y}_{\Lambda_{4}}$ we can connect $p$ and $D_{k1}$ by a curve $\gamma$ with length no greater than $4\pi/\sqrt{\hat \mu_{\mathrm{loss}}}$ by our construction. Notice that $\gamma$ must intersect $\mathcal K$.
From the triangle inequality (cf. the argument of \eqref{C Lambda 2 diameter estimate}) we obtain
$$
\diam(U_{k,\Lambda_4}\subset (\tilde Y,\tilde g))\leq \frac{16\pi}{\sqrt{\underline R}}
+\frac{8\pi}{\sqrt{\hat \mu_{\mathrm{loss}}}}+{2}\diam(\Lambda_4\subset(X_\ve,g)).
$$

\medskip
{\it Case 2. $m\geq 2$.} We claim that each $D_{ki}$ must intersect at least one slicing surface. {If this is not the case for some $D_{ki}$, then $D_{ki}$ is a closed surface hence a cycle in $\hat C$. We can fix a point $p$ in $D_{ki}$ and a point $q$ in $D_{ki'}$ for $i '\neq i$, and we can find a curve connecting $p$ and $q$ in $U_k$ and $\Omega_{l-1}$, respectively, thanks to the path-connectivity of $U_k$ and $\Omega_{l-1}$. This gives} a simple closed curve $\gamma$ in $\hat C$ which intersects only once with $D_{ki}$. This implies that $D_{ki}$ represents a non-trivial homology class in $H_2(\hat C)$, which does not lie in the image of $H_2(\partial \hat C)\to H_2(\hat C)$. This contradicts the surjectivity of $H_2(\partial \hat C)\to H_2(\hat C)$ from the construction in the slicing procedure in Step 1.

Now we collect all the slicing surfaces intersecting ${\bigcup_i} D_{ki}$ and denote them by $S_{k1}, S_{k2},\ldots, {S_{kr}}$. We claim that the union
$$
\mathcal T_k:=\left(\bigcup_{i=1}^mD_{ki}\right)\cup\left(\bigcup_{j=1}^{r} S_{kj}\right)
$$
must lie in some component of the slice-and-dice trace
$$
\mathcal T_{SD}:=\left(\bigcup_i S_i\right)\cup\left(\bigcup_j D_j\right).
$$
Otherwise, $D_{ki}$ and $S_{kj}$ can be divided into $s\geq 2$ collections
$$
\mathcal B_\tau=\{D_{k1}^\tau,\ldots, D_{km_\tau}^\tau,S_{k1}^\tau,\ldots, S_{k{r}_\tau}^\tau\},\ \tau=1,2,\ldots,s,
$$
such that the unions of $D_{ki}$ and $S_{kj}$ from different collections lie in different components of $\mathcal T_{SD}$. For the first collection we notice that the combination of chains
$$
D_{k1}^1+\cdots+D_{km_1}^1-\partial\Omega_{l-1}\cap \left(\bigcup_{j=1}^{{r}_1}S_{kj}^1\right)
$$
turns out to be a cycle in $\hat C$. Since there are other dicing surfaces from the second collection, as before, {thanks to the path-connectivity of $U_k$ and $\Omega_{l-1}$,} we can construct a simple closed curve in the interior of $\hat C$ which only intersects once with the above cycle, which yields a contradiction to the surjectivity of $H_2(\partial \hat C)\to H_2(\hat C)$ again.

From previous discussion we can conclude from Lemma \ref{Lem: combination diameter} that
$$
\diam(\mathcal T_{k,\Lambda_2}\subset(\tilde Y,\tilde g))\leq \frac{48\pi}{\sqrt{\underline R}}+3\diam(\Lambda_2\subset(X_\ve,g))+2\diam(\Lambda_4\subset(X_\ve,g)).
$$
From a similar discussion as in Case 1 depending on $\mathcal T_k\cap\tilde X_{\Lambda_2}$ empty or not we can obtain
\[
\begin{split}
\diam(U_{k,\Lambda_2}\subset(\tilde Y,\tilde g)) \leq \frac{48\pi}{\sqrt{\underline R}} & +5\diam(\Lambda_2\subset(X_\ve,g))\\
& +2\diam(\Lambda_4\subset(X_\ve,g))+\frac{8\pi}{\sqrt{\hat \mu_{\mathrm{loss}}}}.
\end{split}
\]
This completes the proof.
\end{proof}

\subsection{Completion of the proof of Theorem \ref{Thm: main}}
First we present a quantitative filling lemma for $M_{n-2}$.
\begin{proposition}\label{Prop: filling Mn-2}
Let $n \in \{3,4,5\}$. Assume that $M_{n-2}\cap \tilde N_\ve\neq \emptyset$. There is a positive constant $r_0$ independent of $L$ such that $M_{n-2}$ can be realized as the relative boundary of a chain $\Gamma$ relative to $ \ti X_{\ve}$, whose support is contained in $B^{\tilde g}_{r_0}(M_{n-2})$.
\end{proposition}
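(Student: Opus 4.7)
My strategy is to exploit the extrinsic diameter estimates of Section \ref{Extrinsic diameter estimate} to replace $M_{n-2}$, modulo $\tilde X_\ve$, by a relative cycle (or union of such) of uniformly bounded extrinsic diameter, and then to invoke the quantitative filling Lemma \ref{Lem: quantitive filling}. First, I would discard components $C$ of $M_{n-2}$ contained entirely in $\tilde X_\ve$ (equivalently, $C \cap \tilde N_\ve = \emptyset$): such $C$ is zero in $H_{n-2}(\tilde Y, \tilde X_\ve)$ and needs no filling. For each remaining component, $C \cap \tilde N_\ve \neq \emptyset$ implies $C_{\Lambda_2} := C \cap \tilde Y_{\Lambda_2} \neq \emptyset$; and since $\partial C_{\Lambda_2} \subset \tilde \Lambda_2 \subset \tilde X_\ve$, the chain $C_{\Lambda_2}$ is itself a relative $(n-2)$-cycle modulo $\tilde X_\ve$ representing the same class as $C$, and its extrinsic diameter is exactly what Section \ref{Extrinsic diameter estimate} controls.

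For $n = 3$ and $n = 4$, Lemmas \ref{Lem: 3D} and \ref{Lem: 4D} uniformly bound $\diam(C_{\Lambda_2} \subset (\tilde Y, \tilde g))$ independently of $L$. Choosing any $\tilde q \in C_{\Lambda_2} \cap \tilde N_\ve$, the cycle $C_{\Lambda_2}$ lies in a geodesic ball $B^{\tilde g}_r(\tilde q)$ of uniform radius $r$, so Lemma \ref{Lem: quantitive filling} produces a relative $(n-1)$-chain $\Gamma_C$ bounding $C_{\Lambda_2}$ modulo $\tilde X_\ve$ with support in $B^{\tilde g}_{F(r)}(\tilde q) \subset B^{\tilde g}_{F(r)}(M_{n-2})$. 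Summing $\Gamma := \sum_C \Gamma_C$ over the finitely many relevant components yields the desired filling with $r_0 := F(r)$.

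For $n = 5$, individual components of $M_{n-2}$ may have unbounded diameter, and I would instead invoke the slice-and-dice decomposition of Proposition \ref{Prop: slice and dice}: each slicing surface $S_i$, dicing surface $D_j$, and block $U_k$ has its $\tilde Y_{\Lambda_2}$-part (respectively $\tilde Y_{\Lambda_4}$-part for $D_j$) of uniformly bounded extrinsic diameter. The filling would then be assembled in layers: first fill each slicing surface $S_i$ by a $3$-chain $\Xi_i$ modulo $\tilde X_\ve$ of uniformly bounded radius, applying Lemma \ref{Lem: quantitive filling} to the bounded representative $S_{i, \Lambda_2}$; next fill each dicing surface $D_j$ similarly by a $3$-chain $\Upsilon_j$ relative to $\tilde X_\ve$ and the adjacent slicing-surface boundaries; finally, for each block $U_k$, combine $U_{k, \Lambda_2}$ with the previously constructed $\Xi_i$ and $\Upsilon_j$ to produce a $4$-chain $\Gamma_k$ of uniformly bounded radius whose boundary modulo $\tilde X_\ve$ equals $U_k$ up to internal cancellation. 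Summing $\Gamma := \sum_k \Gamma_k$, the internal slicing and dicing contributions cancel pairwise, leaving $\partial \Gamma = M_3$ modulo $\tilde X_\ve$.

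The principal difficulty lies in the dimension $5$ assembly: one must verify that the staged fillings combine coherently with all boundary contributions canceling modulo $\tilde X_\ve$, and that the uniformly bounded-radius estimates survive each layer so that the final $r_0$ depends only on $(Y, N_\ve, g)$ and the fixed data from Subsection \ref{Definition of avoidance hypersurfaces}, not on $L$. The excision Lemma \ref{Lem: excision}, guaranteeing that the relevant relative homology groups vanish, is the key algebraic input allowing each staged filling to exist in the first place.
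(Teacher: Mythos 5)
Your overall strategy is the paper's: for $n=3,4$ your argument is essentially identical to the one given there (the paper fills $C_\ve:=C\cap\tilde N_\ve$ rather than $C_{\Lambda_2}$, an immaterial difference since the two differ by a chain in $\tilde X_\ve$), and for $n=5$ both you and the paper fill interfaces and blocks separately and then sum. However, your $n=5$ assembly has a concrete problem in the choice of filling units. A dicing surface $D_j$ has boundary $\partial D_j$ lying on the slicing surfaces, so $D_j$ (or $D_{j,\Lambda_2}$) is \emph{not} a relative cycle modulo $\tilde X_\ve$, and Lemma \ref{Lem: quantitive filling} --- which only fills relative boundaries in $\mathcal B_k(\tilde Y,\tilde X_\ve)$ --- cannot produce your chain $\Upsilon_j$ ``relative to $\tilde X_\ve$ and the adjacent slicing-surface boundaries''; no lemma in the paper supplies a quantitative filling relative to an auxiliary surface. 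The paper avoids this by taking as filling units the connected components $V_{ki}$ of $\partial U_k$: these are closed surfaces assembled from pieces of slicing \emph{and} dicing surfaces, hence genuine relative $2$-cycles mod $\tilde X_\ve$, and $V_{ki,\ve}$ has bounded diameter because $V_{ki}\subset\overline{U_k}$.

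Relatedly, once one fills per block-boundary-component, your ``internal contributions cancel pairwise'' step is not automatic: the fillings of the same interface seen from its two adjacent blocks are a priori different chains, so the leftover $\sum_{k,i}\hat U_{ki}$ is in general a nonzero relative $3$-cycle. The paper groups these leftovers by connected components $W_j$ of the slice-and-dice set $\mathcal T_{SD}$ into relative cycles $\Theta_j$ and fills them with a second application of Lemma \ref{Lem: quantitive filling}, where the diameter bound of Lemma \ref{Lem: combination diameter} is what places each $\Theta_j$ in a ball of uniformly bounded radius --- a use of that lemma entirely absent from your plan. Your cancellation idea could be salvaged by fixing one filling per interface component and using it with opposite signs for the two adjacent blocks, but only after replacing ``individual slicing/dicing surfaces'' by ``components of $\partial U_k$'' as the objects being filled; as written, the dimension-five part does not go through.
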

\begin{proof}
It suffices to deal with each component $C$ of $M_{n-2}$.

\medskip

{\it Dimension three and four.} It follows from Lemma \ref{Lem: excision} that we have $[C]=0\in H_{n-2}(\tilde Y,\tilde X_\ve)$ and so the same thing holds for $C_\ve:=C\cap \tilde N_\ve$. From Lemma \ref{Lem: 3D} and Lemma \ref{Lem: 4D} we see
$$
\diam(C_\ve\subset (\tilde Y,\tilde g))\leq \diam(C_{\Lambda_2}\subset (\tilde Y,\tilde g))\leq r_1^*,
$$
where
$$
r_1^*=\frac{8\pi}{\sqrt{\underline R}}+\diam(\Lambda_2\subset (X_\ve,g)).
$$
Fix a point $p\in C_{\ve}$ (if $C_\ve=\emptyset$ then nothing needs to be done). From Lemma \ref{Lem: quantitative filling} we can find a chain $\Gamma$ supported in $B_{r_2^*}(p)$ such that $C_\ve$ is a relative boundary of $\Gamma$ relative to $\tilde X_\ve$, where $r_2^*=R(r_1^*)$ and $R(\cdot)$ is the function from Lemma \ref{Lem: quantitative filling}. We just take $r_0=r_2^*$. Then $\Gamma$ is supported in $B_{r_0}(C)$.

\medskip

{\it Dimension five.} We just need to deal with the case when $C_\ve\neq \emptyset$. To fill $C_\ve$ we use the slice-and-dice from Proposition \ref{Prop: slice and dice} with the same notation. Recall that each block $U_k$ satisfies
$$
\diam(U_{k,\ve})\leq r_1^* \mbox{ with } U_{k,\ve}:=U_k\cap \tilde N_\ve,
$$
where
$$
r_1^*= \frac{48\pi}{\sqrt{\underline R}}+5\diam(\Lambda_2\subset(X_\ve,g)) +2\diam(\Lambda_4\subset(X_\ve,g))+\frac{8\pi}{\sqrt{\hat \mu_{\mathrm{loss}}}}.
$$
Let $V_{k1}, V_{k2},\ldots,V_{kl_k}$ denote the boundary components of $U_k$. If $V_{ki,\ve}:=V_{ki}\cap\tilde N_\ve=\emptyset$, we fix a point $p_{ki}$ in $V_{ki}$ and take $\hat U_{ki}=\emptyset$. Otherwise, we fix a point $p_{ki}$ in $V_{ki,\ve}$ and it follows from Lemma \ref{Lem: excision} and Lemma \ref{Lem: quantitative filling} that we can find a chain $\hat U_{ki}$ supported in $B_{r_2^*}(p_{ki})$ with $r_2^*=R(r_1^*)$ such that $V_{ki}$ is a relative boundary of $\hat U_{ki}$ relative to $\tilde X_\ve$. Denote $Q_k=U_{k,\ve}-\sum_i \hat U_{ki}$. Then $Q_k$ is a relative cycle in $\tilde Y$ relative to $\tilde X_\ve$. Notice that $Q_k=\emptyset$ when $U_{k,\ve}=\emptyset$. Otherwise, we fix a point $q_k\in U_{k,\ve}$. Combining the diameter bound for $U_{k,\ve}$ and the control for support of $\hat U_{ki}$ from previous discussion, we see that $Q_k$ is supported in $B_{r_3^*}(q_k)$ with
$
r_3^*=r_2^*+r_1^*.
$
Using Lemma \ref{Lem: quantitative filling} again we can find a chain ${\Xi_k}$ supported in $B_{r_4^*}(q_k)$ with $r_4^*=R(r_3^*)$ such that $Q_k$ is a relative boundary of ${\Xi_k}$ relative to $\tilde X_\ve$. In the chain level, we have obtain
$$
\partial\left(\sum_k{\Xi_k}\right)
= \sum_{k}Q_{k}
= \sum_{k}U_{k,\ve}-\sum_{k,i}\hat U_{ki}
= C-\sum_{k,i}\hat U_{ki}\mbox{ relative to }\tilde X_\ve.
$$

Next we let $W_1,W_2,\ldots, W_s$ denote the connected components of
$$
\mathcal T_{SD}=\left(\bigcup_i S_i\right)\cup\left(\bigcup_j D_j\right).
$$
Let
$$
 \Theta_j=\bigcup_{V_{ki}\subset W_j}\hat U_{ki}.
$$
Notice that $\Theta_j$ is a relative cycle in $\tilde Y$ relative to $\tilde X_\ve$. If $\Theta_{j,\ve}:=\Theta_{j}\cap\tilde N_\ve=\emptyset$, we take $\hat \Gamma_j=\emptyset$. Otherwise, we pick up a point $x_j\in \Theta_{j,\ve}$ and it follows from Lemma \ref{Lem: excision} and Lemma \ref{Lem: quantitative filling} as well as Lemma \ref{Lem: combination diameter} that there is a chain $\hat \Gamma_j$ supported in $B_{r_6^*}(x_j)$ {such that $\partial\hat{\Gamma}_{j}=\Theta_{j}$ relative to $\ti{X}_{\ve}$}, where $r_6^*=R(r_5^*)$ with
$$r_5^*=\frac{48\pi}{\sqrt{\underline R}}+3\diam(\Lambda_2\subset(X_\ve,g))+2\diam(\Lambda_4\subset(X_\ve,g)).$$

Now we arrive at
$$
C=\partial\left(\sum_k{\Xi_k}+\sum_j\hat\Gamma_j\right)\mbox{ relative to }\tilde X_\ve.
$$
Notice that we have ${\Xi_k}\subset B_{r_4^*}(q_k)\subset B_{r_4^*}(C)$ and
$$\hat\Gamma_j\subset B_{r_6^*}(x_j)\subset B_{r_6^*}(\hat U_{ki})\subset B_{r_6^*+r_2^*}(p_{ki})\subset B_{r_6^*+r_2^*}(C).$$
By taking
$$\Gamma=\sum_k{\Xi_k}+\sum_j\hat\Gamma_j\mbox{ and }r_0=r_2^*+r_4^*+r_6^*$$
we complete the proof.
\end{proof}

\begin{proof}[Proof of Theorem \ref{Thm: main}]
We divide the argument into two cases:

\medskip

{\it Case 1. $M_{n-2}\cap \tilde N_\ve=\emptyset$.} The chain $\tilde\Gamma_{n-1}:=\tilde M_{n-1}+\Omega_{n-1}$ with $\tilde M_{n-1}$ and $\Omega_{n-1}$ coming from Lemma \ref{Lem: Mn-1} and Lemma \ref{Lem: Mn-2} respectively is a relative cycle in $\tilde Y$ relative to $\tilde X_\ve$. From our construction the line $\tilde \sigma$ has non-zero algebraic intersection with $\tilde\Gamma_{n-1}$, which yields ${[\tilde\Gamma_{n-1}]}\neq 0\in H_{n-1}(\tilde Y,\tilde X_\ve)$ contradicting Lemma \ref{Lem: excision}.

\medskip
{\it Case 2. $M_{n-2}\cap \tilde N_\ve\neq\emptyset$}. We consider the chain $\tilde \Gamma_{n-1}:=\tilde M_{n-1}+\Omega_{n-1}+\Gamma$, where the extra $\Gamma$ is the chain from Proposition \ref{Prop: filling Mn-2}. Fix $L>r_0$ with $r_0$ from Proposition \ref{Prop: filling Mn-2}. Combining the facts
$$
\dist_{\tilde g}(M_{n-2},\tilde \sigma(\mathbb R))\geq L>r_0\mbox{ and }\mathrm{supp}\,\Gamma\subset B^{\tilde g}_{r_0}(M_{n-2})
$$
we see that $\tilde \sigma(\mathbb R)$ cannot have any intersection with $\Gamma$. In particular, the chain $\tilde \Gamma_{n-1}$ is still a relative cycle in $\tilde Y$ relative to $\tilde X_\ve$ having non-zero algebraic intersection number with the line $\tilde \sigma$, which leads to the same contradiction as above.

\medskip

So far we have obtained a desired contradiction assuming that there is a complete metric on $Y$ with positive scalar curvature, which proves Theorem \ref{Thm: main}.
\end{proof}

\section{Proof of Theorem \ref{Thm: partial classification} and \ref{Thm: mapping rigidity}}\label{Sec: compact-to-complete}
First let us recall
{
\begin{definition}[i.e. Definition \ref{Defn: quasi-proper}]
Let $M^{n}$ and $N^{n}$ be orientable $n$-manifolds. A continuous map $f:M \to N$ is said to be quasi-proper if for all proper maps $\phi:\mathbb{R}_{+}\to M$, the composed map $f\circ\phi:\mathbb{R}_{+}\to N$ is either proper or converges to a point in $N$ for $t\to\infty$.
\end{definition}
}
\begin{definition}[i.e. Definition \ref{Defn: degree}]
Let $M^{n}$ and $N^{n}$ be orientable $n$-manifolds (possibly non-compact). A quasi-proper map $f:M\to N$ is said to have non-zero degree if
	\begin{itemize}\setlength{\itemsep}{1mm}
	\item $S_\infty$ consists of discrete points, where
 $$S_\infty=\bigcap_{K\subset M \text{ compact}} \overline{f(M-K)};$$
	\item the composed map
	$$H_n^{\mathrm{lf}}(M)\xrightarrow{i_*} H_n^{\mathrm{lf}}(f^{-1}(N-S_\infty))\xrightarrow{f_*} H_n^{\mathrm{lf}}(N-S_\infty)$$
	is non-zero, where $H_{\ast}^{\mathrm{lf}}(\cdot)$ are the locally-finite singular homology groups {with $\mathbb{Z}$ coefficients} and $i_{*}$ is the restriction map.
\end{itemize}
\end{definition}

To compute the degree of the map $f$ we need to fix a choice of orientations on $M$ and $N$. Correspondingly we denote the fundamental classes by $[M]\in H_n^{\mathrm{lf}}(M)$ and by $[N]\in H_n^{\mathrm{lf}}(N)$ respectively. Since the set $S_\infty$ consists of discrete points, we have the natural isomorphism
$$
H_n^{\mathrm{lf}}(N-S_\infty)\cong H_n^{\mathrm{lf}}(N)\cong \mathbb Z[N].
$$
Then there is a uniquely determined integer, denoted by $\deg f$ and called the \emph{degree} of $f$, such that we have
$$f_*(i_*([M]))=\deg f\cdot [N]\in H_n^{\mathrm{lf}}( N-S_\infty).$$
We point out the fact that the degree of $f$ is uniquely determined up to sign concerning all possible choices of those orientations on $M$ and $N$. In particular, it makes sense to talk about the absolute value $|\deg f|$ without determining a choice of the orientations.

The key tool to prove Theorem \ref{Thm: partial classification} appears to be the quantitative filling proposition below, which can be derived from our previous arguments.

\begin{proposition}\label{Prop: quantitative filling}
    For $n\in\{3,4,5\}$, let $(M^n,g_M)$ be a complete orientable $n$-manifold with positive scalar curvature and $(N,g_N)$ be a closed $n$-manifold whose universal cover is $(n-2)$-connected. Suppose that
 \[
f:(M,g_{M})\to (N,g_{N})
\]
is a smooth quasi-proper map with non-zero degree. Then there exists a universal constant $L$ such that for any embedded closed $(n-2)$-submanifold $\Sigma_{n-2}$ in the universal cover $\tilde N$, the cycle $|\deg(f)|\cdot\Sigma_{n-2}$ can be realized as the boundary of an $(n-1)$-chain supported in $B_{L}(\Sigma_{n-2})$.
\end{proposition}
\begin{proof}
Given \cite[Lemma 19]{CLL23} we only need to deal with the case when $M$ is non-compact. As in \cite[Lemma 18]{CLL23} we lift $N$ to its universal cover $\tilde N$ and lift the map $f:M\to N$ to a map $\hat f:\widehat M\to \tilde N$ such that $\widehat M$ is a connected cover of $M$ satisfying
\begin{equation}\label{Eq: lifting}
\ker\big(\pi_1(M,x_0)\to \pi_1(N,y_0)\big)=(p_M)_*(\pi_1(\widehat M,\hat x_0)),
\end{equation}
where marked points are taken such that we have the commutative diagram
\begin{equation}\label{Eq: lifting diagram}
\begin{split}
\xymatrix{(\widehat M,\hat x_0)\ar[r]^{\hat f}\ar[d]^{p_M}&(\tilde N,\tilde y_0)\ar[d]^{p_N}\\
(M,x_0)\ar[r]^{f}&(N,y_0).}
\end{split}
\end{equation}

We have to verify the following
\begin{lemma}\label{Lem: lifting}
The map	$\hat f:\widehat M\to \tilde N$ is a smooth quasi-proper map with non-zero degree and we have $|\deg \hat f|\,\big|\,|\deg f|$.
\end{lemma}
\begin{proof}
We first observe that for any $\hat{z}\in\widehat{M}$, the restricted map $\hat f:\mathcal O_{\hat z}\to \mathcal O_{\tilde w}$ is injective, where
\[
\tilde{w} = \hat{f}(\hat{z}), \
z = p_{M}(\hat{z}), \
w = p_{N}(\tilde{w}), \
\mathcal O_{\hat z} = p_M^{-1}(z), \
\mathcal O_{\tilde w} = p_N^{-1}(w).
\]
Otherwise, we can find a curve $\hat\gamma:[0,1] \to \widehat M$ such that $\hat\gamma(0)$ and $\hat\gamma(1)$ are two different points in $\mathcal O_{\hat z}$. Then $p_M\circ \hat\gamma$ induces an element in $\ker\big(\pi_1(M,x_0)\to \pi_1(N,y_0)\big)$ but not in $(p_M)_*(\pi_1(\widehat M,\hat x_0))$, which contradicts \eqref{Eq: lifting}.

Now we show that the set
$$
\tilde S_\infty=\bigcap_{\widehat K\subset \widehat M \text{ compact}} \overline{\hat f(\widehat M- \widehat K)} \subset \tilde N
$$
consists of discrete points. Since $p_N$ is a covering map, $\tilde S_\infty$ consists of discrete points if and only if $p_N(\tilde S_\infty)$ consists of discrete points. It then suffices to prove $p_N(\tilde S_\infty)\subset S_\infty$. For any $\tilde{w}\in\tilde S_\infty$, there exists a sequence $\{\hat{z}_{i}\}\subset\widehat{M}$ such that
\[
\tilde{w} = \lim_{i\to\infty}\hat{f}(\hat{z}_{i}), \ \
\lim_{i\to\infty}d_{g_{\widehat{M}}}(\hat{x}_{0},\hat{z}_{i}) = +\infty,
\]
where $g_{\widehat{M}}$ and $g_{\tilde{N}}$ denote the lifted metrics of $g_{M}$ and $g_{N}$ on $\widehat{M}$ and $\tilde{N}$ respectively. Set $z_{i}=p_{M}(\hat{z}_{i})$. Using the commutative diagram \eqref{Eq: lifting diagram} it is clear that
\[
\lim_{i\to\infty}f(z_{i})
= \lim_{i\to\infty}f(p_{M}(\hat{z}_{i}))
= \lim_{i\to\infty}p_{N}(\hat{f}(\hat{z}_{i}))
= p_{N}(\tilde{w}).
\]
We claim $\lim_{i\to\infty}d_{g_{M}}(x_{0},z_{i})=+\infty$. It then follows that $p_{N}(\tilde{w})\in S_\infty$ as required. To prove the claim, we argue by contradiction. Suppose that the claim is not true. Then after passing to a subsequence, we have $d_{g_{M}}(x_{0},z_{i})\leq L$ for some constant $L$ independent of $i$. Then for each $z_{i}$, there exists $\hat{x}_{i}\in\mathcal{O}_{\hat{x}_{0}}$ such that $d_{g_{\widehat{M}}}(x_{0},z_{i})\leq L$ and so
\[
d_{g_{\tilde{N}}}(\hat{f}(\hat{x}_{i}),\hat{f}(\hat{z}_{i}))
\leq \Lip f|_{B_{L}^{g_{M}}(x_{0})}\cdot d_{g_{\widehat{M}}}(x_{0},z_{i})
\leq \Lip f|_{B_{L}^{g_{M}}(x_{0})}\cdot L.
\]
Set $\tilde{L}=\Lip f|_{B_{L}^{g_{M}}(x_{0})}\cdot L$. Then when $i$ is sufficiently large, we have
\[
d_{g_{\tilde{N}}}(\tilde{w},\hat{f}(\hat{x}_{i}))
\leq d_{g_{\tilde{N}}}(\hat{f}(\hat{x}_{i}),\hat{f}(\hat{z}_{i}))+d_{g_{\tilde{N}}}(\tilde{w},\hat{f}(\hat{z}_{i}))
< \tilde{L}+1.
\]
Since the restricted map $\hat f:\mathcal O_{\hat z}\to \mathcal O_{\tilde w}$ is injective, the set $\mathcal{O}_{\tilde{w}}\cap B_{\tilde{L}+1}^{g_{\tilde{N}}}(\tilde{w})$ is infinite, which is impossible.

Next we want to compute the degree of $\hat f$.
Fix an orientation on $M$ and on $N$ respectively, and then we denote the corresponding fundamental classes by $[M]$ and $[N]$. For any point $x\in M$ we shall let $[M]_x$ denote the image of $[M]$ under the map $H_n^{\mathrm{lf}}(M)\to H_n(M,M-\{x\})$. As a convention, similar notations will be used later without further explanation when there is no confusion caused. As a natural choice, we take the orientation on $\widehat M$ and $\tilde N$ such that the corresponding fundamental classes $[\widehat M]$ and $[\tilde N]$ satisfy
$$
(p_M)_{*}([\widehat M]_{\hat x})=[M]_{p_M(\hat x)} \mbox{ for all }\hat x\in \widehat M
$$
and
$$
(p_N)_{*}([\tilde N]_{\tilde y})=[N]_{p_N(\tilde y)} \mbox{ for all }\tilde y\in \tilde N.
$$

Let us consider the restricted map of $f$ on $f^{-1}(N-S_\infty)$ denoted by
$$
f_{\mathrm{res}}:f^{-1}(N-S_\infty)\to N-S_\infty.
$$
Recall that the map $f:M\to N$ has non-zero degree and so the map $f_{\mathrm{res}}$
is surjective. Based on a change of marked points we may assume that $y_0 $ is a point contained in $ N-S_\infty$ and that it is also a regular value of the map $f$.

Let us divide our discussion into two cases:

\medskip
{\it Case 1. the map $f_*:\pi_1(M,x_0)\to \pi_1(N,y_0)$ is surjective.} Notice that the map $f_{\mathrm{res}}$ is proper, so the preimage $f^{-1}(y_0)$ consists of finitely many (regular) points and we can do labeling
$$
f^{-1}(y_0)=\{x_0,x_1,\ldots,x_l\}.
$$
Then we have
$$
\sum_{i=0}^lf_*([M]_{x_i})=\deg f\cdot [N]_{y_0}.
$$
For each $i$ let us fix a point $\hat x_i\in\widehat M$ satisfying $p_M(\hat x_i)=x_i$.

We have already known that the restricted map $\hat f:\mathcal O_{\hat x_i}\to \mathcal O_{\tilde y_0}$ is injective. We show it is also surjective. To see this we denote $\tilde y_i=\hat f(\hat x_i)$ and for any point $\tilde y_0'\in\mathcal O_{\tilde{y}_0}$ we take a curve $\tilde \gamma:[0,1]\to \tilde N$ connecting $\tilde y_i$ and $\tilde y'_0$. Correspondingly we obtain a closed curve $\gamma:(\mathbb S^1,1)\to (N,y_0)$. Recall that the map $f_*: \pi_1(M,x_0)\to \pi_1(N,y_0)$ is surjective by assumption and the same thing holds with $\pi_1(M,x_0)$ replaced by $\pi_1(M,x_i)$. As a result, we can find a closed curve $\gamma_i:(\mathbb S^1,1)\to (M,x_i)$ such that $f_*([\gamma_i])=[\gamma]$. By lifting we can obtain a curve $\hat\gamma_i:[0,1]\to \widehat M$ with $\hat\gamma_i(0)=\hat x_i$ and $p_M\circ \hat\gamma_i=\gamma_i$. Then it is easy to verify that $\hat\gamma_i(1)$ is a point in $\mathcal O_{\hat x_i}$ with $\hat f$-image $\tilde y_0'$.

Now we can compute
\[
\begin{split}
\deg \hat f\cdot [\tilde N]_{\tilde y_0}&=\sum_{\hat x\in \hat f^{-1}(\tilde y_0)}\hat f_*([\widehat M]_{\hat x})\\
&=\sum_{i=0}^l\sum_{\hat x\in \hat f^{-1}(\tilde y_0)\cap \mathcal O_{\hat x_i}}\hat f_*([\widehat M]_{\hat x}).
\end{split}
\]
The above discussion shows that $\hat{f}^{-1}(\tilde{y}_{0})\cap\mathcal{O}_{\hat{x}_{i}}$ consists of one point. Applying $(p_{N})_{*}$ on both sides, we obtain
$$
\deg \hat f\cdot [N]_{y_0}=\sum_{i=0}^l f_*([M]_{x_i})=\deg f\cdot [N]_{y_0}.
$$
Therefore we obtain $\deg \hat f=\deg f$ and in particular $|\deg\hat f|\,\big |\,|\deg f|$.

\medskip
{\it Case 2. the map $f_*:\pi_1(M,x_0)\to \pi_1(N,y_0)$ is not surjective.} In this case, we consider the covering $\check p_N:(\check N, \check y_0) \to (N,y_0)$ such that
$$
(\check p_N)_*(\pi_1(\check N,\check y_0))=f_*(\pi_1(M,x_0))\subset \pi_1(N,y_0).
$$
In particular, we can lift the map $f$ to a map $\check f:(M,x_0)\to (\check N,\check y_0)$ such that $\check{f}_{*}:\pi_{1}(M,x_{0})\to\pi_{1}(\check{N},\check{y}_{0})$ is surjective. From the discussion in Case 1 we conclude $\deg \check f=\deg \hat f$. Let us label
$$
\check p_N^{-1}(y_0)=\{\check y_j\}_{j\in J}.
$$
Notice that we have the relation
\[
\begin{split}
\deg f\cdot [N]_{y_0}&=\sum_{i=0}^l f_*([M]_{x_i})\\
&=\sum_{j\in J}\sum_{x_i\in \check f^{-1}(\check y_j)}{(\check{p}_N)_*}\check f_*([M]_{x_i})\\
&=|J|\cdot \deg \check f\cdot [N]_{y_0},
\end{split}
\]
which implies $|J|$ is finite and so $|\deg\hat f|\,\big |\,|\deg f|$.
\end{proof}

Now we are ready to prove Proposition \ref{Prop: quantitative filling}. Since we have $|\deg\hat f|\,\big|\,|\deg f|$, it suffices to show that $\deg\hat f\cdot \Sigma_{n-2}$ can be filled in its $L$-neighborhood.

Given any embedded closed $(n-2)$-submanifold $\Sigma_{n-2}$, we can perturb it a little bit in its $1$-neighborhood by an isotopy of $\tilde N$ such that $\Sigma_{n-2}$ avoids all points in $p_N^{-1}(S_\infty)$ and that $\Sigma_{n-2}$ is transversal to the smooth map $\hat f$. Note that $\tilde N$ satisfies $\pi_i(\tilde N)=0$ for $1\leq i\leq n-2$ and so the Hurewicz theorem yields $H_i(\tilde N)=0$ for these $i$. In particular, we can find an $(n-1)$-chain whose boundary is $\Sigma_{n-2}$. As before, through a perturbation we may assume that the support of this $(n-1)$-chain is disjoint from $p_N^{-1}(S_\infty)$. In other words, we can find a smooth bounded region $V$ containing $\Sigma_{n-2}$ satisfying $\overline V\cap p_N^{-1}(S_\infty)=\emptyset$ such that $[\Sigma_{n-2}]=0\in H_{n-2}(V)$. Once again we assume that $\partial V$ is transversal to $\hat f$.
The pullback set $U:=\hat f^{-1}(V)$ is a smooth bounded region in $\widehat M$ due to the properness of the restricted map  $\hat f_{\mathrm{res}}:\hat f^{-1}(N-p_N^{-1}(S_\infty))\to \tilde N-p_N^{-1}(S_\infty)$, and $\mathcal S_{n-2}:=\hat f^{-1}(\Sigma_{n-2})$ is an $(n-2)$-submanifold in $U$ satisfying
$$
\hat f_*([\mathcal S_{n-2}])=\deg \hat f\cdot [\Sigma_{n-2}].
$$
It is also well-known that
$$
[\mathcal S_{n-2}]=[\overline U,\partial\overline U]\frown(\hat f^*[\Sigma_{n-2}]^*)\in H_{n-2}(\overline U),
$$
where $[\Sigma_{n-2}]^*$ is the Poincar\'e dual of $[\Sigma_{n-2}]$ in $H^2(\overline V,\partial \overline V)$. Then we see $[\mathcal S_{n-2}]=0$ in $H_{n-2}(\overline U)$ from the fact $[\Sigma_{n-2}]=0$ in $H_{n-2}(\overline V)$.

The rest of the proof is almost a repetition of those arguments from the previous section except that the slice-and-dice argument will be done on the source manifold $\widehat M$ while the filling argument is done on the target manifold $\tilde N$. So we just sketch the idea.
Since the manifold $\widehat M$ is no longer obtained from non-compact connected sum, let us fix the notations first. Since $S_\infty$ consists of discrete points, we can find small open balls centered at these points disjoint from each other whose radii are all less than one. Their union will be denoted by $\mathcal N_\infty$ and we take $E_\infty=f^{-1}(\mathcal N_\infty)$.
By removing bounded components we may assume that each component of $E_\infty$ is unbounded. {By the quasi-properness of $f$, $E_\infty$ only has finitely many components.} Label the components as $E^1_{\infty},\ldots, E_\infty^s$. Since each end $E_\infty^I$ is mapped to an open ball in $N$, {by the lifting criterion, this map can be lifted to $\tilde N$. By our construction of the cover $\widehat M$, the preimage of $E_\infty^I$ in $\widehat M$ thus decomposes into disjoint components, each of which is mapped diffeomorphically onto $E_\infty^I$.} So we can fix an end $\widehat E_\infty^I$ of $\widehat M$ diffeomorphic to $E_\infty^I$ for each $I=1,\ldots,s$.
For each $\mathfrak{q} \in \pi_1(\widehat M,\hat x_0)/\pi_1(M,x_0)$, we define
$$
\widehat E_{\infty,\mathfrak{q}}=\bigcup_{I=1}^s\widehat E_{\infty,\mathfrak{q}}^I:=\bigcup_{I=1}^s\hat\Psi(\mathfrak q,\widehat E_{\infty}^I),
$$
where $\hat\Psi(\cdot,\cdot)$ denotes the deck transformation corresponding to the covering $p_M:(\widehat M,\hat x_0)\to (M,x_0)$. With the above labeling the same avoidance criteria as Proposition \ref{Prop: closed} and \ref{Prop: compact} can be established based on the same argument as in previous section after setting appropriate avoidance hypersurfaces, and we won't repeat this argument in the following discussion.

Now let us set hypersurfaces $\Lambda_i$, $i=1,2,3,4$, by smoothing the distance function to the compact subset $K:=M-E_{\infty}$ following the same way as in Subsection \ref{Definition of avoidance hypersurfaces} (where we also introduce two positive constants $\delta_1$ and $\delta_2$). Let $M_{\Lambda_4}$ denote the bounded region of $M$ enclosed by $\Lambda_4$, then we can define
$$
\mu_{\mathrm{loss}}=\frac{3}{4}\min\{\underline R,\delta_1,\delta_2\}, \mbox{ where }\underline R=\min_{M_{\Lambda_4}}R(g).
$$
Recall that we have $[\mathcal S_{n-2}]=0\in H_{n-2}(\widehat M)$. From the proof of Lemma \ref{Lem: Mn-1} and \ref{Lem: Mn-2} we conclude that there is a universal constant $L_0=L_0(\mu_{\mathrm{loss}})$ such that
\begin{itemize}\setlength{\itemsep}{1mm}
\item either there is a hypersurface $M_{n-1}$ with $\partial M_{n-1}=\mathcal S_{n-2}$ lying in the $L_0$-neighborhood of $\mathcal S_{n-2}$;
\item or there exists a closed $(n-2)$-submanifold $M_{n-2}$ with stabilized scalar curvature $R_{n-2}\geq R(\hat g)|_{M_{n-2}}-\mu_{\mathrm{loss}}$ such that there is a hypersurface $\Omega_{n-1}$ in the $L_0$-neighborhood of $\mathcal S_{n-2}$ with $\partial\Omega_{n-1}=M_{n-2}\cup\mathcal S_{n-2}$.
\end{itemize}

In the former case, $\hat f_\#M_{n-1}$ is an $(n-1)$-chain contained in the $(\Lip f|_{K}\cdot L_0)$-neighborhood of $\Sigma_{n-2}$ with $\partial(\hat f_\#M_{n-1})=\deg \hat f\cdot \Sigma_{n-2}$, and we are done.

In the latter case, we use the notation $(\cdot)_K$ to represent the intersection of the set $(\cdot)$ and the bounded region $K$. It follows from the same argument as in Subsection \ref{Extrinsic diameter estimate} that we can prove
$$
\diam(M_{n-2,K}\subset (\widehat M,\hat g))\leq L_1\text{ when $n=3$ or $4$},
$$
or based on the avoidance criteria we can break $M_{n-2}$ into small blocks $U_k$ when $n=5$, where the slicing surfaces $\{S_i\}$, the dicing surfaces $\{D_j\}$, the blocks $\{U_k\}$ and the components $\{\mathcal T_l\}$ of the slice-and-dice trace satisfy those properties listed in Proposition \ref{Prop: slice and dice} and Lemma \ref{Lem: combination diameter}. In particular, we have
\begin{equation}\label{Eq: extrinsic diameter again}
\begin{split}
\diam(S_{i,K}&\subset (\widehat M,\hat g))+\diam(D_{j,K}\subset (\widehat M,\hat g))\\
&+\diam(U_{k,K}\subset (\widehat M,\hat g))+
\diam(\mathcal T_{l,K}\subset (\widehat M,\hat g))\leq L_1,
\end{split}
\end{equation}
where $L_1$ denotes a universal constant depending only on $\underline R$, $\delta_1$, $\delta_2$, $\{\Lambda_i\}_{i=1}^4$ and $(M,g)$.

Now we have to complete the filling argument in $(\tilde N,\tilde g_N)$ since we don't have any homology-vanishing information on $\widehat M$. Recall that the manifold $N$ is closed and so $(\tilde N,\tilde g_N)$ satisfies the following quantitative filling property \cite[Proposition 6]{CLL23}: there is a function $F:(0,+\infty)\to (0,+\infty)$ such that if $C$ is a $k$-boundary in $\tilde N$ contained in some geodesic ball $B_{r}^{\ti{g}_N}(\tilde q)$ of $(\ti N,\tilde g_N)$ with $\tilde q$ a point in $\tilde N$, then we can find a $(k+1)$-chain $\Gamma_C$ contained in the geodesic ball $B_{F(r)}^{\ti{g}_N}(\tilde q)$ of $(\tilde N,\tilde g_N)$ with $\partial\Gamma_C=C$.

When $n=3$ or $4$, the chain $\hat f_\#\Omega_{n-1}$ is supported in the $(\Lip f|_{K}\cdot L_0)$-neighborhood of $\Sigma_{n-2}$ and the chain
$$\hat f_\#(\partial\Omega_{n-1})-\deg\hat f\cdot \Sigma_{n-2}$$ is an $(n-2)$-cycle whose support has its extrinsic diameter no greater than $\Lip f|_{K}\cdot L_1+4$. Recall that we have $H_{n-2}(\tilde N)=0$ and so the cycle is actually a boundary. From the quantitative filling property we conclude that the chain $\deg \hat f\cdot \Sigma_{n-2}$ is the boundary of an $(n-1)$-chain lying in the $L$-neighborhood of $\Sigma_{n-2}$ with
$$L:=\Lip f|_{K}\cdot L_0+F(\Lip f|_{K}\cdot L_1+4).$$

When $n=5$ we need to investigate the filling problem for the block image $\hat f_\#U_k$ since we have
$$
\hat f_\#(\partial\Omega_{n-1})-\deg\hat f\cdot \Sigma_{n-2}=\sum_k\hat f_\#U_k.
$$
From \eqref{Eq: extrinsic diameter again} we can derive
\begin{equation*}
\begin{split}
&\diam(\mathrm{supp}(\hat f_\#S_{i})\subset (\tilde N,\tilde g_N))+\diam(\mathrm{supp}(\hat f_\#D_{j})\subset (\tilde N,\tilde g_N))\\
&\qquad+\diam(\mathrm{supp}(\hat f_\#U_{k})\subset (\tilde N,\tilde g_N))+
\diam(\mathrm{supp}(\hat f_\#\mathcal T_{l})\subset (\tilde N,\tilde g_N))\\
&\qquad\qquad\leq \Lip f|_{K}\cdot L_1+4.
\end{split}
\end{equation*}
Through a similar argument as in the proof of Proposition \ref{Prop: filling Mn-2} we can find a universal constant $L_2$ depending only on $\underline R$, $\delta_1$, $\delta_2$, $\{\Lambda_i\}_{i=1}^4$ and $(M,g)$ such that the chain
$$
\sum_k\hat f_\#U_k
$$
can be filled in the $L_2$-neighborhood of its support, and so $\deg\hat f\cdot \Sigma_{n-2}$ can be realized as the boundary of an $(n-1)$-chain supported in its $L$-neighborhood with $L:=\Lip f|_{K}\cdot L_0+L_2$.
\end{proof}

Based on Proposition \ref{Prop: quantitative filling} it is then straightforward to prove Theorem \ref{Thm: partial classification} following the work in \cite{CLL23}.

\begin{proof}[Proof of Theorem \ref{Thm: partial classification}]
Fix an arbitrary smooth metric $g_N$ on $N$ and perturb the map $f$ to be smooth.
{
From Proposition \ref{Prop: quantitative filling}, for any embedded closed $(n-2)$-submanifold $\Sigma_{n-2}$ in the universal cover $\tilde N$, the cycle $|\deg(f)|\cdot\Sigma_{n-2}$ can filled in $B_{L}(\Sigma_{n-2})$.
Although this condition is slightly weaker than the assumption stated in \cite[Proposition 8]{CLL23} (where they assume the filling radius estimate for every $(n-2)$-chain rather than every $|\deg f|$ multiple of an $(n-2)$-chain), as pointed out as the end of \cite{CLL23}, \cite[Proposition 8]{CLL23} actually holds under our weaker assumption. Thus it follows from \cite[Proposition 8]{CLL23} that the universal cover $(\tilde N,\tilde g_N)$ satisfies the following property: for any point $p\in \tilde N$ each connected component of a level set of $\dist(p,\cdot)$ has diameter no greater than $20L$, where $L$ is the universal constant from Proposition \ref{Prop: quantitative filling}.}

Using \cite[Corollary 14]{CLL23} we conclude that the fundamental group $\pi_1(N)$ is virtually free, i.e. there is a free subgroup $G$ of $\pi_1(N)$ with finite index. As a consequence, we can find a finite cover $\check N$ of $N$ with $\pi_1(\check N)= G$. Notice that $\pi_i(\check N)=\pi_i(\tilde N)=0$ for all $2\leq i\leq n-2$. It follows from \cite[Section 2 and 3]{GS09} that $\check N$ is homotopy equivalent to $\mathbb S^n$ or connected sums of $\mathbb S^{n-1}\times \mathbb S^1$.
\end{proof}

In the following, we prove a rigidity result for closed Ricci-flat manifolds, which would yield
Theorem \ref{Thm: mapping rigidity} as a corollary.

\begin{proposition}\label{Prop: mapping rigidity general n}
Let $n \ge 3$. Assume that $(M,g)$ is a closed smooth $n$-manifold with Ricci-flat metric and there exists a non-zero degree map $f: M \to N$ to a closed $n$-manifold $N$ such that no finite cover of $N$ is homotopic to $\mathbb{S}^n$ or $\mathbb{S}^{n-1} \times \mathbb{S}^1$, and that $\pi_i(N) = 0$ holds for all $2 \le i \le n-2$. Then $M$ and $N$ are both aspherical, and the metric on $M$ is flat.
\end{proposition}

\begin{proof}
Because $M$ is Ricci-flat, it follows from {\cite[Theorem 4.5]{FW75}} that there is a finite Riemannian covering $\mathbb{T}^{l} \times K^{n-l} \to M$ with $l \le n$, where $K$ is a simply-connected Ricci-flat closed manifold. Then the composition map $h: \mathbb{T}^l \times K \to M \to N$ also has non-zero degree, and this implies $h_* \pi_1(\mathbb{T}^l \times K)$ has finite index in $\pi_1(N)$. Since $\pi_1(\mathbb{T}^l \times K) = \mathbb{Z}^l$ is free abelian, the image $h_* \pi_1({\mathbb{T}^{l} \times K})$ is also abelian and in particular we can write $h_* \pi_1({\mathbb{T}^{l} \times K}) \cong \mathbb{Z}^{k} \times G$, where $0 \le k \le l$ and $G$ is a finite abelian group. Thus the group $\mathbb{Z}^{k}$ is also a subgroup of $\pi_1(N)$ with finite index, and correspondingly there exists a finite covering $N' \to N$ by a closed $n$-manifold $N'$ such that $\pi_1(N') = \mathbb{Z}^{k}$. Clearly we also have $\pi_i(N') =  \pi_i(N) = 0$ for $2 \le i \le n-2$.
In the following, let us make a detailed discussion depending on the value of $k$.

When $k=0$ the manifold $N'$ is $(n-2)$-connected. From the Hurewicz theorem we can obtain $H_1(N') = \pi_1(N') = 0$, which yields $H^1(N') = 0$ as well. By the Poincar\'e duality we have $H_{n-1}(N') =H^1(N') = 0$. From the Hurewicz theorem we conclude $\pi_{n-1}(N')=H_{n-1}(N')=0$, $\pi_n(N')=H_n(N')=\mathbb Z$ and
\[
H_i(N')=\left\{\begin{array}{cc}
0,& i\neq 0,n;\\
\mathbb Z,&i=0,n.
\end{array}\right.
\]
Notice that an element $\alpha:\mathbb S^n\to N'$ in $\pi_n(N')$ induces isomorphisms between all homology groups of $\mathbb S^n$ and $N'$. It follows from \cite[Corollary 4.33]{Hat02} that $N'$ is homotopic to $\mathbb{S}^n$, which is impossible from our assumption.

When $k=1$ it follows from \cite[Theorem 1.3]{GS09} that $N'$ is homotopic to $\mathbb{S}^{n-1} \times \mathbb{S}^1$, which is again impossible from our assumption.

It remains to deal with the case when $k \ge 2$, where our goal is to show that
\begin{equation}\label{Eq: homotopy vanishing}
    \pi_{n-1}(N')=0.
\end{equation}
Let $\tilde{N}$ be the universal cover of $N'$. Since we have $|\pi_1(N')| =\infty$, the manifold $\tilde{N}$ is non-compact. Notice that we have
\begin{equation}\label{Eq: homotopy homology}
\pi_{n-1}(N') = \pi_{n-1}(\tilde{N}) = H_{n-1}(\tilde{N})
\end{equation}
 by the Hurewicz theorem.
Using homology groups with local coefficients, we have
$$H_{n-1}(\tilde{N}) = H_{n-1}(N'; \mathbb{Z}[\mathbb{Z}^k])$$ by \cite[Example 3H.2]{Hat02}. By the Poincar\'e duality, we further have $$H_{n-1}(N'; \mathbb{Z}[\mathbb{Z}^k]) = H^1(N'; \mathbb{Z}[\mathbb{Z}^k]).$$ Since $\mathbb{T}^{k}$ is an Eilenberg--MacLane space $K(\mathbb{Z}^k,1)$, we have $H^1(N'; \mathbb{Z}[\mathbb{Z}^k]) = H^1(\mathbb{T}^k; \mathbb{Z}[\mathbb{Z}^k])$ by \cite[Lemma 2.2]{GS09}. (Notice that although \cite[Lemma 2.2]{GS09} is only stated for the case when the fundamental group is free, the proof actually holds for any fundamental group.)
By \cite[Proposition 3H.5]{Hat02}, we further have $H^1(\mathbb{T}^k; \mathbb{Z}[\mathbb{Z}^k]) = H^1_c (\mathbb{R}^k; \mathbb{Z})$, where $H_c^1$ denotes the first cohomology group with compact supports. Notice that $H^1_c (\mathbb{R}^k; \mathbb{Z}) = 0$ when $k \ge 2$. Combining these equalities, we obtain our desired result
$$\pi_{n-1}(N') = H^1_c (\mathbb{R}^k; \mathbb{Z}) = 0.$$

Since $\tilde{N}$ is non-compact, we have $H_i(\tilde{N}) = 0$ for all $i \ge n$. Then we can conclude $\pi_i(\tilde{N}) = 0$ for all $i \ge n$ from the Hurewicz theorem. Since we also have $\pi_i(\tilde{N}) = 0$ for $2 \le i \le n-2$,  it follows that $N'$ is aspherical and the same thing holds for $N$.

By the homotopy classification of aspherical spaces \cite[Theorem 1.1]{Luc08}, the manifold $N'$ is homotopy equivalent to $\mathbb{T}^{k}$. By considering the homology group in dimension $n$ we must have $n = k$. Since $k \le l \le n$, this also forces $l = k$, so $K$ is a point and $M$ is then isometrically covered by flat $\mathbb{T}^n$. This shows the flatness of $M$, which further implies that $M$ is aspherical by {\cite[Theorem 4.6]{FW75}}.
\end{proof}

{
\begin{remark}
    The referee gave an alternative proof of \eqref{Eq: homotopy vanishing} as follows. By \eqref{Eq: homotopy homology}, we aim to show $H_{n-1}(\tilde N)=0$ for the universal cover $\tilde N$ of $N'$.
    By the Poincar\'e duality, we have $H_{n-1}(\tilde{N}) = H^1_c (\tilde{N}; \mathbb{Z})$, where $H_c^1$ denotes the first cohomology group with compact supports. By definition, we have $$H^1_c (\tilde{N}) = \underset{K \subset \tilde{N} \text{ compact}}{\varinjlim} H^1(\tilde{N}, \tilde{N} \setminus K),$$ where we take the direct limit over all compact subsets $K$ of $\tilde{N}$. For each compact $K$, we have the long exact sequence of reduced cohomology of pairs
$$\tilde{H}^0(\tilde{N}) \to \tilde{H}^0(\tilde{N}\setminus K) \to H^1(\tilde N,\tilde{N}\setminus K) \to H^1 (\tilde{N}).$$
Using $\tilde{H}^0(\tilde{N}) = 0$ and $H^1 (\tilde{N}) = 0$, we have
$$\tilde{H}^0(\tilde{N}\setminus K) \cong H^1(\tilde N,\tilde{N}\setminus K).$$
Taking direct limit over $K$, we obtain
$$  H^1_c (\tilde{N};\mathbb Z)=\mathbb Z^{e(\tilde{N}) - 1},$$
where $e(\tilde{N})$ denotes the number of ends of $\tilde{N}$.
By a classical result of Hopf \cite{Hop43}, since $N'$ and $\mathbb T^k$ has the same fundamental group $\mathbb Z^k$, $k \ge 2$, their universal covers have the same number of ends. Thus $e(\tilde{N}) = 1$, and we have $H_{n-1}(\tilde{N}) = H^1_c (\tilde{N}; \mathbb{Z})=0$.

\end{remark}
}

Now we are ready to prove Theorem \ref{Thm: mapping rigidity}.
\begin{proof}[Proof of Theorem \ref{Thm: mapping rigidity}]
Suppose that $M$ admits a complete metric $g$ with nonnegative scalar curvature. By a result of Kazdan \cite{Kaz82}, if $(M,g)$ is not Ricci-flat, then there exists a complete metric on $M$ with positive scalar curvature. Our Theorem \ref{Thm: partial classification} then yields that a finite cover of $N$ is homotopy equivalent to $\mathbb{S}^n$ or connected sums of $\mathbb{S}^{n-1} \times \mathbb{S}^1$. Next we assume that no finite cover of $N$ is homotopy equivalent to $\mathbb{S}^n$ or connected sums of $\mathbb{S}^{n-1} \times \mathbb{S}^1$.  It follows from above discussion that $(M,g)$ has to be Ricci-flat. In order to apply Proposition \ref{Prop: mapping rigidity general n} we just need to prove the compactness of $M$.

As a preparation we show that $\pi_1(N)$ cannot be a finite group. Otherwise, there is a finite cover $\check N$ of $N$ which is $(n-2)$-connected. As in the proof of Proposition \ref{Prop: mapping rigidity general n}, $\check N$ is homotopy equivalent to $\mathbb S^n$, which is impossible from our assumption.

Now we are ready to derive a contradiction supposing that $M$ is non-compact.
Recall that there is a quasi-proper map $f:M\to N$ is said to have non-zero degree if
\begin{itemize}\setlength{\itemsep}{1mm}
	\item $S_\infty$ consists of discrete points, where
 $$S_\infty=\bigcap_{K\subset M \text{ compact}} \overline{f(M-K)};$$
	\item the composed map
	$$H_n^{\mathrm{lf}}(M)\xrightarrow{i_*} H_n^{\mathrm{lf}}(f^{-1}(N-S_\infty))\xrightarrow{f_*} H_n^{\mathrm{lf}}(N-S_\infty)$$
	is non-zero, where $H_{\ast}^{\mathrm{lf}}(\cdot)$ are the locally-finite singular homology groups and $i_{*}$ is the restriction map.
\end{itemize}
Since $S_\infty$ consists of discrete points, we can find small open balls centered at these points disjoint from each other whose radii are all less than one. Their union will be denoted by $\mathcal N_\infty$ and we take $E_\infty=f^{-1}(\mathcal N_\infty)$.
Take $E$ to be an unbounded component of $E_\infty$. As in the proof of Lemma \ref{Lem: lifting} (Case 2 therein) we can assume that the induced map $f_*:\pi_1(M,x_0)\to \pi_1(N,y_0)$ is surjective after passing to a finite cover of $N$ (which still has infinite fundamental group). Take the same lifting as in \eqref{Eq: lifting diagram} and use the same notations there. Since we have the isomorphism
$$
\pi_1(\widehat M,\hat x_0)/\pi_1(M,x_0)\cong \pi_1(N,y_0),
$$
the deck transformation group of the covering $(\widehat M,\hat x_0)\to (M,x_0)$ contains infinitely many elements.
Since the end $E$ is mapped to an open ball in $N$, {it is lifted to diffeomorphic copies of itself in the cover $\widehat M$. It follows that $\widehat M$ has infinitely many ends,} which is impossible due to the Cheeger--Gromoll splitting theorem \cite{CG71}.
\end{proof}

\appendix

\section{Construction of $\rho$}\label{construction of rho}

In this section, we give the explicit construction of $\rho$ in the proof of Proposition \ref{Prop: compact}. For positive constants $\mu$ and $T$ with $\mu<T$, let $\eta_{\mu,T}:[0,+\infty)\to[0,T]$ be a one-variable function such that
\begin{itemize}\setlength{\itemsep}{1mm}
\item $\eta_{\mu,T}(t)=t$ in $[0,T-\mu]$;
\item $\eta_{\mu,T}(t)=T$ in $[T+\mu,+\infty)$;
\item $\Lip\eta_{\mu,T}\leq1$.
\end{itemize}
\begin{figure}[htbp]
\centering
\includegraphics[width=\linewidth]{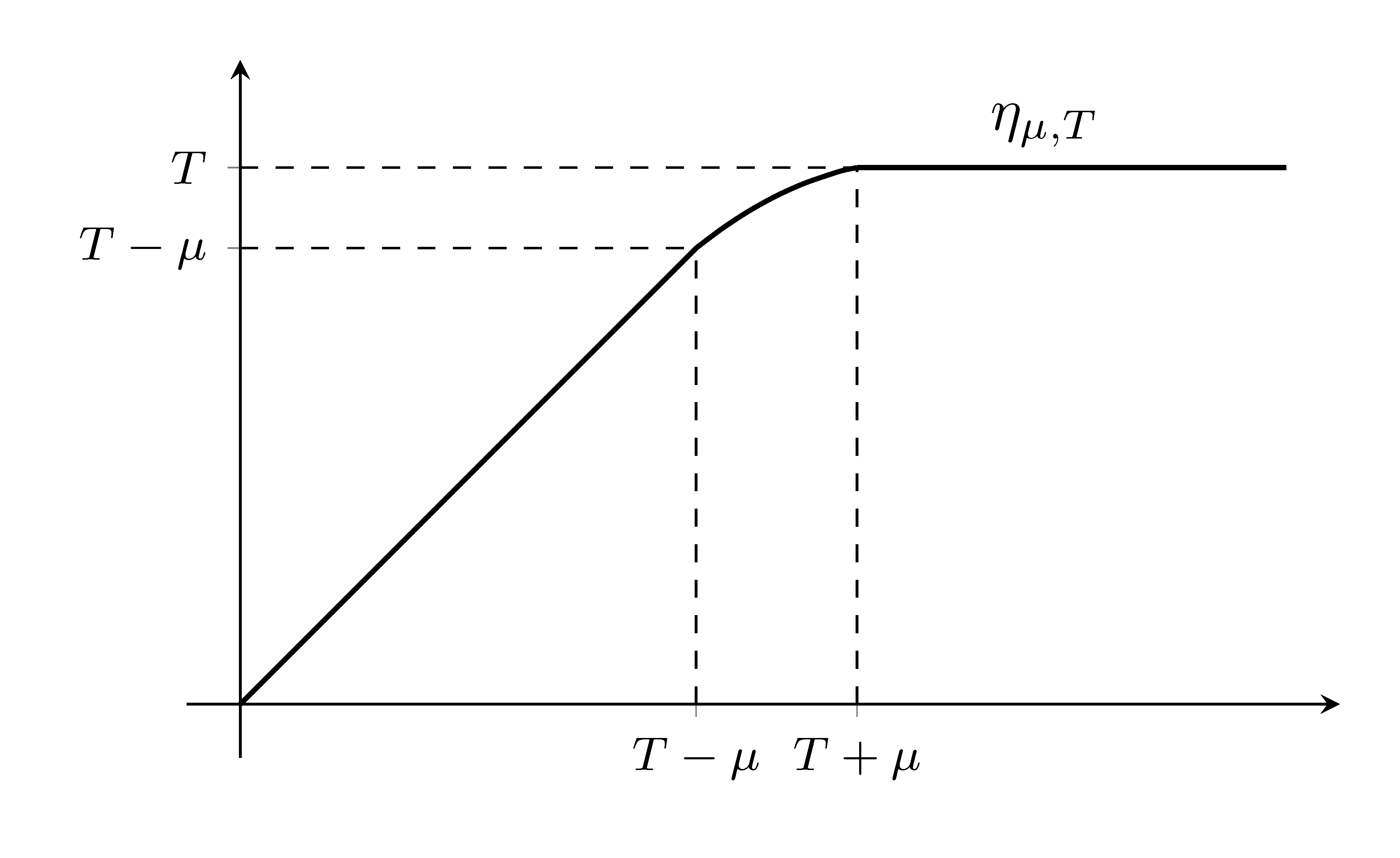}
\caption{The function $\eta_{\mu,T}$}
\label{function_eta}
\end{figure}
The graph of $\eta$ is illustrated in Figure \ref{function_eta}.
For positive constants $\mu_{1}$ and $\mu_{2}$, we define the function $\rho$ on $V$ by
\[
\rho = \eta_{\mu_{2},T_{2}-\mu_{2}}\circ\ti{\rho}|_{V}+\frac{\mu_{2}}{\mu_{1}}\big(\mu_{1}-\eta_{\mu_{2},\mu_{1}}\circ\dist_{V}(\cdot,\partial_{+})\big).
\]
We will choose $\mu_{1}$ and $\mu_{2}$ suitably such that $\rho$ satisfies the desired properties. Observe that $\ti{\rho}|_{\partial_{-}}=0$ and $\ti{\rho}|_{\partial_{+}}>T_{2}$. We obtain $\partial_{-}\cap\partial_{+}=\emptyset$ and so $\dist_{V}(\partial_{-},\partial_{+})>0$. We choose $\mu_{1}<\frac{1}{2}\dist_{V}(\partial_{-},\partial_{+})$ sufficiently small such that $\dist_{V}(\cdot,\partial_{+})$ is smooth and $\ti{\rho}|_{V}>T_{2}$ in the $(2\mu_{1})$-neighborhood of $\partial_{+}$. Recalling $\Lip\ti{\rho}<1$ and $d_{\Lambda_{2}}<T_{2}$, we can choose $\mu_{2}\ll\mu_{1}$ sufficiently small such that
\[
\Lip\rho \leq \Lip\ti{\rho}+\frac{\mu_{2}}{\mu_{1}} < 1, \ \
d_{\Lambda_{2}} < T_{2}-2\mu_{2}.
\]
It then suffices to verify that the above $\rho$ satisfies the first two properties in the proof of Proposition \ref{Prop: compact}.

\medskip
\noindent
{\bf Step 1.} $\rho=0$ on $\partial_-$ and $\rho^{-1}(T_2)=\partial_+$.
\medskip

On $\partial_{-}$, we have $\ti{\rho}=0$ and $\dist_{V}(\cdot,\partial_{+})>2\mu_{1}>\mu_{1}+\mu_{2}$, which implies $\rho=0$. If $p\in\rho^{-1}(T_2)$, then
\[
T_{2} = \rho(p) \leq (T_{2}-\mu_{2})+\frac{\mu_{2}}{\mu_{1}}\big(\mu_{1}-0\big) \leq T_{2}.
\]
This implies the equality holds and then $\dist_{V}(p,\partial_{+})=0$, which is equivalent to $p\in\partial_{+}$. If $p\in\partial_{+}$, then $\ti{\rho}(p)>T_{2}>T_{2}-\mu_{2}+\mu_{2}$ and $\dist_{V}(p,\partial_{+})=0$, which show
\[
\rho(p) = (T_{2}-\mu_{2})+\frac{\mu_{2}}{\mu_{1}}\big(\mu_{1}-0\big) = T_{2}.
\]

\medskip
\noindent
{\bf Step 2.} $\rho^{-1}([0,d_{\Lambda_2}])=(\tilde\rho|_{V})^{-1}([0,d_{\Lambda_2}])$.
\medskip

If $p\in\rho^{-1}([0,d_{\Lambda_2}])$, then
\[
\eta_{\mu_{2},T_{2}-\mu_{2}}\circ\ti{\rho}(p)\leq \rho(p) \leq d_{\Lambda_{2}}.
\]
Using $d_{\Lambda_{2}}<T_{2}-2\mu_{2}$, we obtain $\ti{\rho}(p)=\eta_{\mu_{2},T_{2}-\mu_{2}}\circ\ti{\rho}(p)$, which implies $\ti{\rho}(p)\leq d_{\Lambda_2}$.
If $p\in(\tilde\rho|_{V})^{-1}([0,d_{\Lambda_2}])$, then $\ti{\rho}(p)\leq d_{\Lambda_{2}}<T_{2}$ and so $p$ does not belong to $(2\mu_{1})$-neighborhood of $\partial_{+}$. It follows that
\[
\rho(p)=\eta_{\mu_{2},T_{2}-\mu_{2}}\circ\ti{\rho}(p) = \ti{\rho}(p) \leq d_{\Lambda_{2}}.
\]

\end{document}